\DeclareMathOperator{\asc}{asc}
\DeclareMathOperator{\Col}{\textsc{col}}
\DeclareMathOperator{\Hom}{Hom}
\DeclareMathOperator{\rev}{rev}
\DeclareMathOperator{\wt}{wt}
\newcommand{\ov}[1]{\overline{#1}}
\newcommand{\Q}{\mathbb{Q}}
\newcommand{\X}{f}
\newcommand{\Z}{\mathbb{Z}}
\newcommand{\mcH}{\mathcal{H}}
\newcommand{\mcS}{\mathcal{S}}
\newcommand{\Sym}{\mathfrak{S}}
\newcommand{\QSym}{\mathrm{QSym}}
\newcommand{\mt}{\widetilde{m}}
\newcommand{\bx}{\mathbf{x}}
\theoremstyle{plain}
\theoremstyle{definition}
\newtheorem{theorem}{Theorem}
\newtheorem{lemma}[theorem]{Lemma}
\newtheorem{prop}[theorem]{Proposition}
\newtheorem{conjecture}[theorem]{Conjecture}
\newtheorem{remark}[theorem]{Remark}
\newtheorem{example}[theorem]{Example}
\begin{document}

\title{A rooted variant of Stanley's chromatic~symmetric~function}

\date{\today}

\author{Nicholas A. Loehr}
\thanks{This work was supported by a grant from the Simons Foundation/SFARI
(\#633564, N.A.L.).  }
\address{Dept. of Mathematics, Virginia Tech, Blacksburg, VA 24061-0123 }
\email{nloehr@vt.edu}

\author{Gregory S. Warrington}
\thanks{This work was supported by a grant from the Simons Foundation/SFARI 
(\#429570, G.S.W.). }
\address{Dept. of Mathematics and Statistics, University of Vermont, Burlington, VT 05401}
\email{gregory.warrington@uvm.edu}
% NAL: Both grant numbers confirmed. Footnotes use the exact 
% language from Simons Foundation Policies document, pg. 11.

\keywords{Chromatic symmetric functions, graph colorings, trees, rooted trees}

\begin{abstract}
Richard Stanley defined the chromatic symmetric function $X_G$ of a graph $G$
and asked whether there are non-isomorphic trees $T$ and $U$ with $X_T=X_U$.
We study variants of the chromatic symmetric function for rooted
graphs, where we require the root vertex to either use or avoid a specified
color. We present combinatorial identities and recursions satisfied by these 
rooted chromatic polynomials, explain their relation to pointed chromatic
functions and rooted $U$-polynomials, and prove three main theorems.
First, for all non-empty connected graphs $G$, Stanley's polynomial
$X_G(x_1,\ldots,x_N)$ is irreducible in $\Q[x_1,\ldots,x_N]$ for all
large enough $N$. The same result holds for our rooted variant where
the root node must avoid a specified color. We prove irreducibility
by a novel combinatorial application of Eisenstein's Criterion. 
Second, we prove the rooted version of Stanley's Conjecture: two rooted trees 
are isomorphic as rooted graphs if and only if their rooted chromatic 
polynomials are equal.  In fact, we prove that a one-variable specialization 
of the rooted chromatic polynomial (obtained by setting $x_0=x_1=q$, 
$x_2=x_3=1$, and $x_n=0$ for $n>3$) already distinguishes rooted trees.
Third, we answer a question of Pawlowski by providing a combinatorial 
interpretation of the monomial expansion of pointed chromatic functions.
\end{abstract}

% 2020 MSC Subject Classifications: 05C05, 05C15, 05C31, 05E05, 05A19
%  05C05 is trees
%  05C15 is coloring of graphs and hypergraphs
%  05C31 is graph polynomials
%  05E05 is symmetric functions and generalizations
%  05A19 is combinatorial identities, bijective combinatorics
\subjclass{Primary: 05C05, 05C15; Secondary: 05C31, 05E05, 05A19}

\maketitle 

%%%%%%%%%%%%%%%%%%%%%%%%%%%%%%%%%%%%%%%%%%%%%%%%%%%%%%%%%%%%%%%%%%%%%%%%%%%%%
\section{Introduction}
\label{sec:intro}

\subsection{Stanley's Chromatic Symmetric Functions}
\label{subsec:stan-CSF}

Let $G$ be a simple graph with vertex set $V(G)$ and edge set $E(G)$.
A \emph{proper coloring} of $G$ is a coloring of the vertices in $V(G)$
such that adjacent vertices receive different colors. 
The classical \emph{chromatic polynomial} $\chi_G$ counts
proper colorings: $\chi_G(k)$ is the number of
proper colorings of $G$ when there are $k$ available colors.
In 1995, Richard Stanley~\cite{stanley} defined the chromatic
symmetric function $X_G$, which is a multivariable generalization 
of $\chi_G$.
For fixed $N\geq 1$, $X_G(x_1,x_2,\ldots,x_N)$ is
a weighted sum of proper colorings of $G$ using colors in $\{1,2,\ldots,N\}$.
A proper coloring that colors $e_i$ vertices with color $i$ (for 
$1\leq i\leq N$) contributes the monomial $x_1^{e_1}x_2^{e_2}\cdots x_N^{e_N}$
to $X_G(x_1,\ldots,x_N)$. Each $X_G(x_1,\ldots,x_N)$ is a symmetric polynomial 
in $N$ variables.  Taking a formal limit as $N$ goes to infinity
produces the chromatic symmetric function $X_G$ using variables $(x_n:n>0)$.
We recover $\chi_G(k)$ from $X_G$ by setting $x_i=1$ for $1\leq i\leq
k$ and $x_i=0$ for all $i>k$.

The chromatic symmetric functions have been a rich source
of combinatorial insights and problems.  The influential
paper~\cite[pg. 170]{stanley} contains an open problem that 
we call Stanley's Chromatic Conjecture for Trees:
% GSW5: For the record:
% According to Rosa Orellana (personal communication) he later
% expressed this explicitly as a conjecture in a talk at a graph
% theory conference organized by Karen Collins
\begin{conjecture}\label{conj:stan}
 Trees $T$ and $U$ are isomorphic (as graphs) if and only if $X_T=X_U$.
\end{conjecture}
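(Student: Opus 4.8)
Since this is Stanley's long-standing open conjecture, and the present paper establishes only its \emph{rooted} analogue, what follows is a realistic line of attack rather than a complete proof. The natural starting point is Stanley's power-sum expansion $X_G=\sum_{S\subseteq E(G)}(-1)^{|S|}p_{\lambda(S)}$, where $\lambda(S)$ is the partition of $|V(G)|$ recording the vertex counts of the connected components of the spanning subgraph $(V(G),S)$. When $G=T$ is a tree on $n$ vertices, every $S\subseteq E(T)$ spans a forest, so $|S|=n-\ell(\lambda(S))$ is forced by the partition; hence $[p_\lambda]X_T=(-1)^{\,n-\ell(\lambda)}a_\lambda(T)$, where $a_\lambda(T)$ is the number of spanning forests $(V(T),S)$ with $S\subseteq E(T)$ whose component sizes form $\lambda$ (necessarily $|S|=n-\ell(\lambda)$). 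Thus $X_T$ carries exactly the same information as the family $\bigl(a_\lambda(T)\bigr)_{\lambda\vdash n}$ — equivalently the $U$-polynomial of $T$ — and the conjecture becomes: \emph{the numbers $a_\lambda(T)$ determine $T$ up to isomorphism.}

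I would then try to assemble a complete invariant from the $a_\lambda(T)$ by extracting successively finer structural data. The two-part partitions already give a lot: $a_{(n-k,k)}(T)$ counts the edges $e$ whose deletion splits $T$ into pieces of sizes $k$ and $n-k$, so for $k=1$ we recover the number of leaves, and in general we recover the sequence counting, for each $k$, the edges that peel off a subtree of size $k$. One can push further using partitions with three or more parts to pin down how these pendant subtrees are nested and glued; this is where the known partial progress lives (it is known that $X_T$ determines the degree sequence and the path counts of $T$, the conjecture is settled for caterpillars and several further families, and it holds by computer for all small trees). The goal of this step would be to reconstruct a subtree-incidence structure on $T$ rigid enough to force the isomorphism type, combined with an induction — on $n$ by peeling an identifiable leaf, or on the diameter with the caterpillar result as base case.

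A complementary route is to exploit the rooted theorem proved in this paper, which says that the rooted chromatic polynomial — indeed its one-variable specialization with $x_0=x_1=q$, $x_2=x_3=1$, $x_n=0$ for $n>3$ — already determines a rooted tree. If one could show that $X_T$ alone pins down a canonical root of $T$ (for instance its center vertex or center edge) together with enough local data to reconstruct the rooted chromatic polynomial based there, the unrooted conjecture would follow from the rooted one. Making the center, and the rooted polynomial at the center, visible inside $X_T$ both look delicate, precisely because $X_T$ is defined with no reference to any vertex.

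The main obstacle is intrinsic to the reformulation: the statistics $a_\lambda(T)$ remember only the \emph{multiset} of component sizes obtained by deleting edges, not which pieces were adjacent to which, so a priori two genuinely different ways of gluing the same collection of subtrees could produce identical $a_\lambda(T)$. Ruling out such collisions — controlling the interactions among overlapping subtrees — is exactly the unresolved heart of the conjecture, and it is the step for which none of the currently available invariants (degree sequence, path counts, pendant-subtree counts) is known to be strong enough.
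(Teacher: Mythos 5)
There is no proof here to certify: the statement you were asked about is Conjecture~\ref{conj:stan}, Stanley's open conjecture, and the paper itself does not prove it either --- it only proves the rooted analogue (Theorem~\ref{thm:best}) and a reformulation (Proposition~\ref{prop:recover}). Your write-up is candid about this, and its preliminary reductions are correct: for a tree $T$ on $n$ vertices the power-sum expansion does give $[p_\lambda]X_T=(-1)^{n-\ell(\lambda)}a_\lambda(T)$ with $a_\lambda(T)$ counting edge subsets whose forest has component sizes $\lambda$, so $X_T$ is equivalent to the $U$-polynomial data of $T$; and the two-part partitions do recover the pendant-subtree size counts and hence the number of leaves. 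But these reductions are logically neutral: the equivalence of $X_T$ with $(a_\lambda(T))_\lambda$ simply restates the conjecture in different coordinates, and the step you would need --- that the multiset of component-size data, which forgets all adjacency information among the pieces, pins down the gluing --- is precisely the unresolved content. You name this obstruction yourself, but nothing in the proposal overcomes it.

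Your second route is closer to what the paper actually establishes, and it is worth seeing exactly where it stalls. Proposition~\ref{prop:recover} shows that Conjecture~\ref{conj:stan} is \emph{equivalent} to recovering the multiset $[X_0(T_*^r):r\in V(T)]$ from $X_T$, and the paper's subsequent example shows this recovery genuinely fails for non-tree graphs, so any argument must use tree structure in an essential way. Your variant --- locate a canonical root (center vertex or edge) inside $X_T$ and reconstruct the rooted polynomial there, then invoke Theorem~\ref{thm:best} --- would indeed suffice if carried out, but making the rooted polynomial at the center visible inside the root-free invariant $X_T$ is exactly as hard as the recovery problem the paper isolates, and no mechanism for it is proposed. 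So the verdict is: the approach is a reasonable survey of known reductions and of the paper's own reformulation, but it contains no new idea that closes the gap, and the statement remains unproved both in your proposal and in the paper.
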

This conjecture remains unsettled, although progress has been made in
several directions. Using computer investigations,
Heil and Ji~\cite{heil-ji} verified that the conjecture is true
for all trees with at most 29 vertices. 
The conjecture also holds for certain restricted classes of trees: 
spiders~\cite{martin-morin-wagner}, 2-spiders~\cite{huryn-chmutov},
caterpillars~\cite{aliste-prieto-zamora,loebl-sereni,morin}, trivially
perfect graphs~\cite{tsujie}, and trees with diameter at most
five~\cite{alieste}; see also the families described 
in~\cite[Cor. 8.7]{AWW-Laplacian}. It is also known
that features of a tree such as its degree sequence, number of leaves,
and path sequence are recoverable from $X_T$~\cite{martin-morin-wagner}.

In this paper, we study some variations of Stanley's chromatic 
symmetric function for rooted graphs.  Given a rooted graph $G_*$, 
we define polynomials $X_i(G_*)$ and $X_{\neq i}(G_*)$
that are weighted sums of certain proper colorings of $G_*$. 
For $0\leq i\leq N$, $X_i(G_*;x_0,x_1,\ldots,x_N)$
is a sum over proper colorings of $G_*$
using available colors $\{0,1,\ldots,N\}$ such that the root
vertex of $G$ must receive color $i$. 
The polynomial $X_{\neq i}(G_*;x_0,x_1,\ldots,x_N)$ is defined similarly,
but we sum over proper colorings where the root cannot receive color $i$.
We can view $X_0(G_*;x_0,\ldots,x_N)$
as a polynomial in $x_0$ where the coefficient of each $x_0^k$ is a
symmetric polynomial in $x_1,\ldots,x_N$. 

We prove various combinatorial and algebraic properties of these
polynomials, including: a recursion for rooted trees
(Proposition~\ref{prop:recT}), a deletion-contraction recursion for
rooted graphs (Theorem~\ref{thm:dc}), combinatorial interpretations of
the coefficients of $x_0^k$ in $X_0(G_*)$ and $X_{\neq 0}(G_*)$
(Proposition~\ref{prop:coeff-zk}), and a formula for the power-sum
expansion of $X_0(G_*)$ (Proposition~\ref{prop:p-exp}). 
Using the power-sum formula, we show how to obtain $X_0(G_*)$ as an
algebraic transformation of pointed chromatic functions 
or of the rooted $U$-polynomials
(Sections~\ref{subsec:pawlowski-SF} and~\ref{subsec:rooted-U}).
Theorem~\ref{thm:best} proves this rooted version of Stanley's Conjecture:
for all $N\geq 3$, two rooted trees $T_*$ and $U_*$ are isomorphic 
(as rooted graphs) if and only if 
$X_0(T_*;x_0,\ldots,x_N)=X_0(U_*;x_0,\ldots,x_N)$.
In fact, we prove that the one-variable specialization 
$f_0(G_*)=X_0(G_*;q,q,1,1)$ already suffices to distinguish rooted trees.

\subsection{Irreducibility Properties}
\label{subec:irred}

Irreducibility and unique factorization are crucial algebraic 
tools in our proof that $X_0(G_*)$ and its specialization $f_0(G_*)$
distinguish rooted trees.
This proof strategy is also used in prior literature such 
as~\cite[Theorem 9]{ADZ-rooted},~\cite[Theorem 2.8]{gordon-mcmahon}, 
and~\cite[Theorem 1.3]{hasebe-tsujie}.  
Suppose a graph $G$ has connected components $C_1,C_2,\ldots,C_k$.
Then $X_G=X_{C_1}X_{C_2}\cdots X_{C_k}$, since each proper coloring of
$G$ arises by independently choosing a proper coloring for each component.
This means that the chromatic symmetric polynomial for a disconnected
graph is always reducible.  A natural question is whether $X_G$ must be 
irreducible when $G$ is a connected graph or a tree. 

To formulate precise questions about irreducibility, we must specify the ring 
in which the factorization occurs. The polynomial $X_G(x_1,\ldots,x_N)$
belongs to the polynomial ring $\Q[x_1,\ldots,x_N]$ and also to the
subring $\Lambda_N$ of symmetric polynomials in $N$ variables. 
Both rings are unique factorization domains (UFDs); in fact, 
we can view $\Lambda_N$ as a polynomial ring $\Q[e_1,\ldots,e_N]$ in the
algebraically independent elementary symmetric polynomials $e_1,\ldots,e_N$.

Using work of Cho and van Willigenburg~\cite{cho-vanWilligenburg},
Tsujie~\cite[Cor.~2.4]{tsujie} proved that for any non-empty connected 
graph $G$ and all $N\geq |V(G)|$,
$X_G(x_1,\ldots,x_N)$ is irreducible in the ring $\Lambda_N$.  In
Theorem~\ref{thm:XG-irred}, we prove that for any non-empty connected
graph $G$, $X_G(x_1,\ldots,x_N)$ is irreducible in $\Q[x_1,\ldots,x_N]$
for large enough $N$. Note that irreducibility in $\Q[x_1,\ldots,x_N]$
is a stronger condition than irreducibility in $\Lambda_N$, since
there are more potential irreducible factors in the full polynomial
ring compared to its subring $\Lambda_N$. For example, $e_N=x_1x_2\cdots x_N$
is irreducible in $\Lambda_N$ but reducible in $\Q[x_1,\ldots,x_N]$.

The core of our proof
(Lemma~\ref{lem:use-eisen1}) invokes Eisenstein's Criterion to show
the specialization $X_G(q,\ldots,q,1,\ldots,1)$ is irreducible in $\Q[q]$ 
if we set $k$ variables equal to $q$ and $p$ variables equal to $1$
for certain choices of $k$ and $p$.  The proof succeeds because of a 
surprising affinity between the hypotheses of Eisenstein's Criterion and 
the enumerative properties of proper colorings.
Theorem~\ref{thm:better} uses the same ideas to prove that
for connected bipartite rooted graphs $G_*$,
the specialization $X_{\neq 0}(G_*;q,q,1,1,\ldots,1)$ 
is irreducible in $\Q[q]$ if we set $p$ variables equal to $1$ 
for certain primes $p$.
When $G_*$ is a rooted tree, irreducibility holds for all primes $p$.

\subsection{Variations of Chromatic Symmetric Polynomials}
\label{subsec:related-polys}

Stanley's chromatic symmetric function is just one
member of an entire ecosystem of polynomials and algebraic constructs
based on graph colorings. Some important variations of $X_G$
include the noncommutative chromatic symmetric functions of Gebhard
and Sagan~\cite{gebhard-sagan}, the chromatic quasisymmetric functions
of Shareshian and Wachs~\cite{shareshian-wachs}, the strict order
quasisymmetric functions of Hasebe and Tsujie~\cite{hasebe-tsujie},
the pointed chromatic functions of Pawlowski~\cite{pawlowski},
and the rooted $U$-polynomials of Aliste-Prieto, de Mier, and 
Zamora~\cite{ADZ-rooted}, among many others (see Section~\ref{sec:others}
for a fuller discussion). Our polynomials $X_0(G_*)$ are related
to the pointed chromatic functions by an algebraic transformation
on $\Lambda_N$, as explained in Proposition~\ref{prop:X0-vs-paw}.
In turn, the pointed chromatic functions are specializations of
the rooted $U$-polynomials, which are known to distinguish
rooted trees~\cite[Theorem 9]{ADZ-rooted}. This leads to a second,
algebraic proof that $X_0(G_*;x_0,\ldots,x_N)$ distinguishes
rooted trees for $N\geq |V(G)|$.

Previous authors have found other multivariate polynomial invariants
characterizing isomorphism classes of rooted trees.
In addition to the rooted $U$-polynomials already mentioned,
each of the following is a complete invariant for rooted trees:
the two-variable Chaudhary--Gordon polynomials~\cite{chaudhary-gordon},
the polychromates of Bollob\'as and Riordan~\cite{bollabas-riordan}, 
the greedoid polynomials of Gordon and McMahon~\cite{gordon-mcmahon}, 
and the strict order quasisymmetric functions of Hasebe and 
Tsujie~\cite{hasebe-tsujie}. Related results appear 
in~\cite{ellzey,foley-et-al,gebhard-sagan,shareshian-wachs}.
% https://mathoverflow.net/questions/107863/is-the-following-invariant-of-rooted-trees-a-complete-invariant

\begin{remark}\label{rem:heil-ji}
The polynomials $Z_G^v(c)$ defined by Heil and
Ji~\cite[Def. 3.1]{heil-ji} are essentially the same as our
polynomials $X_c(G_*^v)$.  However, given Heil and Ji's algorithmic
focus, there is no overlap with this paper beyond the decomposition
of~\eqref{eq:recT}.  We note for completeness that in~\cite{heil-ji},
Lemma~3 and the following corollaries must be modified slightly to
account for the fact that the $F_i$ are symmetric functions in all the
variables \emph{except} $x_c$.
\end{remark}

Given the many prior variants of Stanley's chromatic symmetric
functions, we should highlight some particular benefits of the
polynomials $X_0(G_*)$ studied here. First, $X_0(G_*)$ is
combinatorially very close to the original polynomial $X_G$ --- the
only new restriction on proper colorings is that the root vertex must
get color $0$.  Second, we can easily recover $X_G$ from $X_0(G_*)$ by
the symmetry operations specified in~\eqref{eq:reln}
and~\eqref{eq:reln2}.  Proposition~\ref{prop:recover} shows that being
able to solve the reverse problem (recovering all $X_0(G_*)$ that
arise from $X_G$ by varying the choice of root) is equivalent to
Stanley's Conjecture~\ref{conj:stan}.  Third, the deletion-contraction
recursion for $X_0(G_*)$, which is not directly available for $X_G$,
lets us efficiently compute both polynomials. Fourth, the simple
restriction on the color of the root vertex suffices to yield a short
proof of the desired invariance property for rooted trees.  The proof
makes compelling contact between a combinatorial property (isomorphism
of rooted trees) and an algebraic property (unique factorization of
polynomials into irreducible factors).  Fifth, the specialized version
$X_0(G_*;q,q,1,1)$ discards a huge amount of information on colorings
but still suffices to distinguish rooted trees.

Some previously studied relatives of $X_G$ achieve similar invariance results 
by incorporating additional information in other ways.  
The Gebhard--Sagan noncommutative chromatic symmetric 
function~\cite{gebhard-sagan} remembers which vertices 
receive which colors by using non-commuting variables.
The Shareshian--Wachs chromatic quasisymmetric 
polynomial~\cite{shareshian-wachs}
uses a new variable $t$ to record an ascent statistic for each proper coloring.
Hasebe and Tsujie~\cite{hasebe-tsujie} adopt a related approach where
only proper colorings satisfying additional ascent constraints are allowed.
Section~\ref{sec:others} contains a more detailed discussion.

\subsection{Outline of Paper}
\label{subsec:outline}

The outline of this paper is as follows. Section~\ref{sec:defs}
introduces needed definitions, notation, and background.
Section~\ref{sec:proof} proves our irreducibility theorems
for $X_G$, $X_{\neq 0}(G_*)$, and their specializations.
These results are used to prove our analogue of Stanley's Conjecture
(Theorem~\ref{thm:best}).
Section~\ref{sec:additional} proves the deletion-contraction recursion
for $X_0(G_*)$ and describes the coefficients of $X_0(G_*)$ in various
bases.  Section~\ref{sec:others} examines some previously studied
variants of Stanley's chromatic symmetric function and their relations
to the rooted chromatic polynomials.

%%%%%%%%%%%%%%%%%%%%%%%%%%%%%%%%%%%%%%%%%%%%%%%%%%%%%%%%%%%%%%%%%%%%%%%%%%
%%%%%%%%%%%%%%%%%%%%%%%%%%%%%%%%%%%%%%%%%%%%%%%%%%%%%%%%%%%%%%%%%%%%%%%%%%
\section{Definitions and Background}
\label{sec:defs}

\subsection{Definition of Polynomials for Rooted Graphs}
\label{subsec:def-rooted}

The notation $G=(V(G),E(G))$ means $G$ is a simple graph with vertex
set $V(G)$ and edge set $E(G)$.  A \emph{rooted graph} $G_*$ is a
nonempty graph $G$ with one vertex $r$ of $G$ marked as the
\emph{root}. When we need to display the root, 
we write $G_{*}^r$ for the rooted graph obtained
from $G$ with root vertex $r$.  The \emph{color set} is
$C=\{0,1,2,\ldots,N\}$, where $N$ is a fixed positive integer.
 A \emph{proper coloring} of $G$ is a function
$\kappa:V(G)\rightarrow C$ such that for all $v,w\in V(G)$, if an edge
joins $v$ to $w$, then $\kappa(v)\neq \kappa(w)$.  Let $\Col(G)$ be
the set of proper colorings of $G$.  For each color $i\in C$, let
$\Col_i(G_*)$ be the set of proper colorings of $G$ where
$\kappa(r)=i$ (the root gets color $i$).  Let $\Col_{\neq i}(G_*)$ be
the set of proper colorings of $G$ where $\kappa(r)\neq i$ (the root's
color is not $i$).  The \emph{weight} of a coloring
$\kappa:V(G)\rightarrow C$ is $\wt(\kappa)=\prod_{v\in V(G)}
x_{\kappa(v)}$. 

We now introduce several versions of chromatic polynomials for a rooted
graph $G_*$. These are polynomials in $\Q[x_0,x_1,\ldots,x_N]$
with nonnegative integer coefficients. Define:
\begin{align}
 X(G_*;x_0,x_1,\ldots,x_N) &=\sum_{\kappa\in\Col(G)} \wt(\kappa);\\
 X_i(G_*;x_0,x_1,\ldots,x_N) &= \sum_{\kappa\in\Col_i(G_*)} \wt(\kappa); 
\label{eq:def-orig}\\
 X_{\neq i}(G_*;x_0,x_1,\ldots,x_N)
 &= \sum_{\kappa\in\Col_{\neq i}(G_*)} \wt(\kappa).
\end{align}
We omit the variable list from the notation when it is clear from context.
Note $X(G_*;x_0,x_1,\ldots,x_N)$ is Stanley's chromatic 
symmetric function $X_G$ specialized to the given variable set. 
This polynomial is symmetric in $x_0,\ldots,x_N$,
since applying any permutation of the color set $C$ to a proper coloring
of $G$ produces another proper coloring of $G$.
For each $i\in C$, $X_i(G_*)$ and $X_{\neq i}(G_*)$ are polynomials in
$x_0,\ldots,x_N$ that are symmetric in all the variables except $x_i$.
This follows since any permutation of $\{0,1,\ldots,N\}$ fixing $i$
induces bijections from $\Col_i(G_*)$ to itself and from $\Col_{\neq i}(G_*)$ 
to itself.  We refer to $X_0(G_*)$ as the \emph{rooted chromatic
polynomial} for $G_*$ in $N+1$ variables.  

Let $\Lambda_N$ be the ring of symmetric
polynomials in variables $x_1,\ldots,x_N$, 
let $\Lambda$ be the ring of symmetric functions in $(x_k:k\geq 1)$,
and let $z=x_0$. 
Then $X_0(G_*)$ and $X_{\neq 0}(G_*)$ are in $\Lambda_N[z]$, 
the ring of polynomials in $z$ with coefficients in $\Lambda_N$.  
The constant coefficient of $X_{\neq 0}(G_*)$, namely the specialization 
upon setting $z=0$, is Stanley's chromatic symmetric polynomial $X_G$ in
variables $x_1,\ldots,x_N$. Thus, we may view $X_{\neq 0}(G_*)$
as a refinement of the original chromatic symmetric function.
The coefficient of $z^k$ in $X_0(G_*)$ 
is a symmetric polynomial in $x_1,\ldots,x_N$ 
that is homogeneous of degree $n-k$, where $n=|V(G_*)|$.
For all $N$ and $M$ with $N>M$, setting
the last $N-M$ variables equal to $0$ in 
$X_0(G_*;x_0,x_1,\ldots,x_N)$ produces $X_0(G_*;x_0,x_1,\ldots,x_M)$.
Because of this stability property, we can let the number of variables
tend to infinity to obtain a version of $X_0(G_*)$ in $\Lambda[z]$.
Informally, this polynomial in $z$ (with symmetric function coefficients)
contains exactly the same information as each finite version
$X_0(G_*;x_0,x_1,\ldots,x_N)$ where $N\geq |V(G_*)|$.
Similar comments are true for $X_{\neq 0}(G_*)$.

\begin{example}\label{ex:path3a}
Suppose $N=2$ and $G$ is a three-vertex path. Then
\begin{equation}\label{eq:ex-xt}
  X_{\includegraphics[width=1cm]{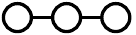}}(x_0,x_1,x_2)
 = 6x_0x_1x_2 + x_0^2x_1 + x_0^2x_2 + x_0x_1^2 + x_1^2x_2 + x_0x_2^2+x_1x_2^2.
\end{equation}
For instance, the coefficient of $x_0x_2^2$ is $1$ because there is only $1$
proper coloring of $G$ using color $2$ twice and color $0$ once
(the middle vertex must receive color $0$).  
Now consider rooted graphs with underlying graph $G$.  
Choosing the root to be either endpoint of the path gives
\begin{align}
  X_0(\raisebox{-0.1\height}{\includegraphics[width=1cm]{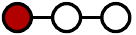}}) = X_0(\raisebox{-0.1\height}{\includegraphics[width=1cm]{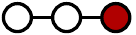}}) 
&= 2x_0x_1x_2 + x_0^2x_1 + x_0^2x_2;\label{eq:exA}\\
  X_{\neq 0}(\raisebox{-0.1\height}{\includegraphics[width=1cm]{ex-A}}) 
 = X_{\neq 0}(\raisebox{-0.1\height}{\includegraphics[width=1cm]{ex-C}}) 
&= 4x_0x_1x_2+x_0x_1^2+x_0x_2^2+x_1^2x_2+x_1x_2^2.
\end{align}
On the other hand, choosing the root to be the middle vertex yields
\begin{equation}\label{eq:exB}
 X_0(\raisebox{-0.1\height}{\includegraphics[width=1cm]{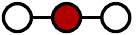}}) = 
x_0(2x_1x_2+x_1^2 + x_2^2);\quad
  X_{\neq 0}(\raisebox{-0.1\height}{\includegraphics[width=1cm]{ex-B}}) = 
 4x_0x_1x_2+x_0^2x_1+x_0^2x_2+x_1x_2^2+x_1^2x_2. 
\end{equation}
\end{example}

%%%%%%%%%%%%%%%%%%%%%%%%%%%%%
\subsection{Basic Identities}
\label{sec:identities}

We now establish some identities relating the various chromatic polynomials
just defined.
Let $\Sym$ be the symmetric group on the color set $C=\{0,1,\ldots,N\}$.
For $i\neq j$ in $C$, $(i,j)$ is the transposition in $\Sym$
that interchanges $i$ and $j$.
The group $\Sym$ acts on $\Q[x_0,x_1,\ldots,x_N]$ by permuting the
variables: $\sigma\bullet x_i=x_{\sigma(i)}$ for $\sigma\in\Sym$ and $i\in C$.
The following identities follow directly from the definitions and
symmetry arguments. 

\begin{prop} For all rooted graphs $G_*$ and all $k\in\{0,1,\ldots,N\}$,
\begin{align}
 X(G_*) &= \sum_{i=0}^N X_i(G_*)=X_k(G_*)+X_{\neq k}(G_*);\label{eq:reln} \\
 X_k(G_*) &= (0,k)\bullet X_0(G_*); \label{eq:reln2} \\
 X_{\neq 0}(G_*) &= \sum_{i=1}^N X_i(G_*)=\sum_{i=1}^N (0,i)\bullet X_0(G_*);\\
 X_{\neq k}(G_*) &= (0,k)\bullet X_{\neq 0}(G_*).
\end{align}
Hence, $X(G_*)=X_G$, $X_{\neq 0}(G_*)$, $X_k(G_*)$, 
and $X_{\neq k}(G_*)$ can all be recovered 
from the rooted chromatic polynomial $X_0(G_*)$. 
\end{prop}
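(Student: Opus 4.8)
The plan is to verify each displayed identity directly from the definitions of the colorings sets $\Col(G)$, $\Col_i(G_*)$, and $\Col_{\neq i}(G_*)$, together with the way $\Sym$ acts on weights. First I would handle~\eqref{eq:reln}: since every proper coloring $\kappa$ of $G$ assigns the root $r$ exactly one color $i\in C$, the sets $\Col_i(G_*)$ partition $\Col(G)$ as $i$ ranges over $C$, so summing the weights gives $X(G_*)=\sum_{i=0}^N X_i(G_*)$. For the second equality in~\eqref{eq:reln}, fix $k$ and split according to whether $\kappa(r)=k$ or $\kappa(r)\neq k$; this partitions $\Col(G)$ into $\Col_k(G_*)$ and $\Col_{\neq k}(G_*)$, yielding $X(G_*)=X_k(G_*)+X_{\neq k}(G_*)$.

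Next I would prove~\eqref{eq:reln2}. The transposition $(0,k)$ induces a bijection $\Col_0(G_*)\to\Col_k(G_*)$ by post-composition, $\kappa\mapsto (0,k)\circ\kappa$: this map preserves properness (a permutation of colors never creates a monochromatic edge) and sends a coloring with root color $0$ to one with root color $k$, and it is an involution hence a bijection. Under this bijection, $\wt((0,k)\circ\kappa)=\prod_{v}x_{(0,k)(\kappa(v))}=(0,k)\bullet\wt(\kappa)$, so applying $(0,k)\bullet$ to $X_0(G_*)=\sum_{\kappa\in\Col_0(G_*)}\wt(\kappa)$ and re-indexing the sum over the image gives exactly $X_k(G_*)$. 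The third line then follows by combining: $X_{\neq 0}(G_*)=\sum_{i=1}^N X_i(G_*)$ because $\Col_{\neq 0}(G_*)$ is partitioned by root color $i\in\{1,\ldots,N\}$, and then substituting $X_i(G_*)=(0,i)\bullet X_0(G_*)$ from~\eqref{eq:reln2}. The fourth line is proved by the same bijection argument as~\eqref{eq:reln2}, now using that $(0,k)$ induces a bijection $\Col_{\neq 0}(G_*)\to\Col_{\neq k}(G_*)$ (it sends root color $\neq 0$ to root color $\neq k$, being an involution).

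Finally, the concluding sentence is immediate bookkeeping: $X_k(G_*)$ and $X_{\neq k}(G_*)$ are obtained from $X_0(G_*)$ by the explicit operators in~\eqref{eq:reln2} and the fourth identity (taking $k=0$ there for $X_{\neq 0}$, or noting $X_{\neq 0}(G_*)=\sum_{i=1}^N(0,i)\bullet X_0(G_*)$ directly), and $X(G_*)=X_G$ follows from~\eqref{eq:reln} as $\sum_{i=0}^N(0,i)\bullet X_0(G_*)$ (with $(0,0)$ the identity). There is no real obstacle here; the only point requiring a word of care is checking that the color-permutation maps genuinely restrict to bijections between the relevant root-constrained subsets of $\Col(G)$, which is the single recurring idea underlying all four identities.
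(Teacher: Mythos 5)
Your proof is correct and follows the same route the paper intends: the paper simply asserts that these identities "follow directly from the definitions and symmetry arguments," and your partition-by-root-color and transposition-bijection arguments are precisely the details being elided. Nothing is missing; the weight computation $\wt((0,k)\circ\kappa)=(0,k)\bullet\wt(\kappa)$ and the check that $(0,k)$ restricts to bijections between the root-constrained coloring sets are exactly the right points to verify.
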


A more interesting fact is that knowledge of $X_{\neq 0}(G_*)$ is also
sufficient to recover $X_0(G_*)$ and all other polynomials listed here.
Given $X_{\neq 0}(G_*)$, we first obtain 
$X_{\neq i}(G_*)=(0,i)\bullet X_{\neq 0}(G_*)$ for all $i$
between $1$ and $N$.  The key observation is:
\begin{equation*}
\sum_{i=0}^N X_{\neq i}(G_*)
  =\sum_{i=0}^N\sum_{\substack{j=0\\j\neq i}}^N X_j(G_*)
  =\sum_{j=0}^N X_j(G_*)\sum_{\substack{i=0\\i\neq j}}^N 1
  =N\sum_{j=0}^N X_j(G_*)=NX(G_*).
\end{equation*}
So $X(G_*)=\frac{1}{N}\sum_{i=0}^N (0,i)\bullet X_{\neq 0}(G_*)$. 
Combining this with~\eqref{eq:reln} yields the following.

\begin{prop}\label{prop:x0-det} For any rooted graph $G_*$,
\begin{equation}\label{eq:x0-det}
  X_0(G_*)=\frac{\sum_{i=0}^N (0,i)\bullet X_{\neq 0}(G_*)}{N}-X_{\neq 0}(G_*).
\end{equation}
\end{prop}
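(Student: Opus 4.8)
The plan is to reduce everything to the single relation $X(G_*)=X_0(G_*)+X_{\neq 0}(G_*)$ coming from~\eqref{eq:reln}, which already gives $X_0(G_*)=X(G_*)-X_{\neq 0}(G_*)$. It therefore suffices to recover the unrooted polynomial $X(G_*)=X_G$ from $X_{\neq 0}(G_*)$ alone. The route to this is a double-counting identity: I would sum the polynomials $X_{\neq i}(G_*)$ over all colors $i\in C$ and interchange the order of summation. Writing $X_{\neq i}(G_*)=\sum_{j\neq i}X_j(G_*)$, the sum $\sum_{i=0}^N X_{\neq i}(G_*)$ becomes $\sum_j X_j(G_*)\cdot|\{i\in C:i\neq j\}|=N\sum_j X_j(G_*)=N\,X(G_*)$, the last step again by~\eqref{eq:reln}.

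Next I would rewrite the left-hand side of this identity using the previous proposition, namely $X_{\neq k}(G_*)=(0,k)\bullet X_{\neq 0}(G_*)$ (where for $k=0$ the transposition $(0,0)$ acts as the identity, so that summand is simply $X_{\neq 0}(G_*)$). This yields $\sum_{i=0}^N (0,i)\bullet X_{\neq 0}(G_*)=N\,X(G_*)$, hence $X(G_*)=\tfrac1N\sum_{i=0}^N(0,i)\bullet X_{\neq 0}(G_*)$. Substituting this into $X_0(G_*)=X(G_*)-X_{\neq 0}(G_*)$ produces exactly the asserted formula~\eqref{eq:x0-det}.

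I do not anticipate a genuine obstacle here; the only point requiring a little care is the bookkeeping in the double-counting step—reading $(0,0)$ as the identity permutation so that the $i=0$ summand contributes $X_{\neq 0}(G_*)$ rather than being dropped, and confirming that the counting constant is $N$ (not $N+1$), since each fixed $X_j(G_*)$ occurs in $X_{\neq i}(G_*)$ for all $i$ in the $N$-element set $C\setminus\{j\}$. Because $N\geq 1$, division by $N$ is legitimate, so~\eqref{eq:x0-det} is a valid identity in $\Q[x_0,\ldots,x_N]$, as claimed.
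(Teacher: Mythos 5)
Your proposal is correct and matches the paper's own argument: both recover $X(G_*)$ from $X_{\neq 0}(G_*)$ via the double-counting identity $\sum_{i=0}^N X_{\neq i}(G_*)=N\,X(G_*)$, rewrite each $X_{\neq i}(G_*)$ as $(0,i)\bullet X_{\neq 0}(G_*)$, and then combine with $X_0(G_*)=X(G_*)-X_{\neq 0}(G_*)$ from~\eqref{eq:reln}. Your bookkeeping remarks (reading $(0,0)$ as the identity and the constant being $N$ rather than $N+1$) are exactly the points the paper's computation relies on.
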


%%%%%%%%%%%%%%%%%%%%%%%%%%
\subsection{Isomorphisms of Graphs and Rooted Graphs}
\label{subsec:iso-defs}

Given graphs $G=(V(G),E(G))$ and $H=(V(H),E(H))$,
a \emph{graph isomorphism} from $G$ to $H$ is a bijection
$f:V(G)\rightarrow V(H)$ such that for all $v,w\in V(G)$,
$E(G)$ contains the edge joining $v$ and $w$ if and only if
$E(H)$ contains the edge joining $f(v)$ and $f(w)$.
Suppose $G_*$ is a rooted graph with underlying graph $G$ and root $r$,
and $H_*$ is a rooted graph with underlying graph $H$ and root $s$.
A \emph{rooted graph isomorphism} from $G_*$ to $H_*$
is a graph isomorphism $f:V(G)\rightarrow V(H)$ with $f(r)=s$.
When such an isomorphism exists, we write $G_*\cong H_*$.

The concept of rooted graph isomorphism for rooted trees
is closely related to the following recursive
construction of rooted trees. For any rooted tree $T_*$, either
$T_*$ is a one-vertex graph consisting of the root $r$ and no edges; 
or $T_*$ has root $r$ joined by edges to the roots of $c\geq 1$
\emph{principal subtrees} $T_{1*},T_{2*},\ldots,T_{c*}$. 
Note that these subtrees can be listed in any order
(in contrast to subtrees in an ordered plane tree, where
 the order of the children of the root is significant).
Studying isomorphism classes of rooted trees amounts to erasing
all vertex labels. Upon doing this, the principal subtrees of $T_*$
form a \emph{multiset} of isomorphism classes of rooted trees, 
in which order does not matter, but repetitions are allowed.

It is routine to check the following criterion for two rooted
trees $T_*$ and $U_*$ (with more than one vertex)
to be isomorphic as rooted graphs.
If $T_*$ has principal subtrees $T_{1*},\ldots,T_{c*}$
and $U_*$ has principal subtrees $U_{1*},\ldots,U_{d*}$, then:
\begin{equation}
\label{eq:iso-crit}
\text{$T_*\cong U_*$ if and only if $c=d$ and (after suitable 
reordering) $T_{i*}\cong U_{i*}$ for $1\leq i\leq c$.}
\end{equation}

\subsection{Recursion for $X_0(T_*)$}
\label{subsec:rec-rooted-tree}

There is a simple recursion for computing $X_0(T_*)$ when $T_*$ is a
rooted tree. A version of this result was used for computational
purposes by Heil and Ji~\cite{heil-ji}
(cf.~\cite{bollabas-riordan,chaudhary-gordon}).

\begin{prop}\label{prop:recT}
Let $T_*$ be a rooted tree with principal rooted subtrees
$T_{1*},T_{2*},\ldots,T_{c*}$. Then
\begin{equation}\label{eq:recT}
 X_0(T_*)=x_0\prod_{j=1}^c X_{\neq 0}(T_{j*}). 
\end{equation}
\end{prop}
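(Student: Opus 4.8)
The plan is to prove the identity by exhibiting a weight-preserving bijection between the proper colorings counted on the two sides of~\eqref{eq:recT}. Write $r$ for the root of $T_*$ and $r_j$ for the root of the principal subtree $T_{j*}$, so that the edges of $T_*$ incident to $r$ are exactly the edges $\{r,r_j\}$ for $1\le j\le c$, and $T_*$ is the disjoint union of $\{r\}$ and the $T_{j*}$ together with these $c$ connecting edges. The left side $X_0(T_*)$ is $\sum_{\kappa}\wt(\kappa)$ over proper colorings $\kappa$ of $T_*$ with $\kappa(r)=0$; such a $\kappa$ contributes a factor $x_0$ from the root and restricts to a coloring $\kappa_j$ of $T_{j*}$ for each $j$. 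Because $T_*$ is a tree, the only constraints linking the $T_{j*}$ to the root (and to each other) are the edges $\{r,r_j\}$, so $\kappa$ is proper on $T_*$ if and only if each $\kappa_j$ is proper on $T_{j*}$ and $\kappa_j(r_j)=\kappa(r_j)\ne\kappa(r)=0$; there are no constraints between distinct $T_{j*}$ and $T_{k*}$ since they share no edge. Thus $\kappa_j\in\Col_{\neq 0}(T_{j*})$ and the data of $\kappa$ is equivalent to the data of the tuple $(\kappa_1,\dots,\kappa_c)$ with $\kappa_j\in\Col_{\neq 0}(T_{j*})$.

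Next I would check that this correspondence is weight-multiplicative: since $V(T_*)$ is the disjoint union of $\{r\}$ and the $V(T_{j*})$, we have $\wt(\kappa)=x_{\kappa(r)}\prod_{j=1}^c\prod_{v\in V(T_{j*})}x_{\kappa(v)}=x_0\prod_{j=1}^c\wt(\kappa_j)$. Summing over all valid $\kappa$, equivalently over all tuples in $\prod_j\Col_{\neq 0}(T_{j*})$, and distributing the product over the sum gives
\begin{equation*}
 X_0(T_*)=\sum_{\kappa\in\Col_0(T_*)}\wt(\kappa)
 =x_0\sum_{(\kappa_1,\dots,\kappa_c)}\ \prod_{j=1}^c\wt(\kappa_j)
 =x_0\prod_{j=1}^c\Bigl(\sum_{\kappa_j\in\Col_{\neq 0}(T_{j*})}\wt(\kappa_j)\Bigr)
 =x_0\prod_{j=1}^c X_{\neq 0}(T_{j*}),
\end{equation*}
which is exactly~\eqref{eq:recT}. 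The edge case $c=0$, where $T_*$ is the single root vertex, is handled by the empty-product convention: $X_0(T_*)=x_0$.

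The only subtle point — and the one I would state carefully — is the independence claim: that a tuple $(\kappa_1,\dots,\kappa_c)$ of proper colorings of the subtrees, each avoiding color $0$ at its own root, glues back to a proper coloring of $T_*$ with no further compatibility conditions. This is exactly where the tree (more precisely, forest-plus-root) structure is used: removing $r$ disconnects $T_*$ into the components $T_{1*},\dots,T_{c*}$, so every edge of $T_*$ is either one of the $\{r,r_j\}$ or lies entirely inside a single $T_{j*}$; there is no edge between two different subtrees that could impose a cross-constraint. This is the analogue, for the rooted setting, of the familiar fact that $X_G$ is multiplicative over connected components, and it is really the heart of the proposition; the rest is bookkeeping with weights and the definition of $\Col_{\neq 0}$.
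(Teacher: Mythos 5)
Your proof is correct and takes essentially the same approach as the paper: decompose a proper coloring of $T_*$ with root colored $0$ into the root's weight $x_0$ times the restrictions to the principal subtrees, note that properness plus the root condition is equivalent to each restriction lying in $\Col_{\neq 0}(T_{j*})$ with no cross-constraints (because the subtrees meet only at the root), and apply the product rule for weighted sets. The paper phrases this as a "build each coloring uniquely" argument rather than spelling out the bijection, but the content is identical.
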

\begin{proof}
The left side $X_0(T_*)$ is the weighted sum of all colorings
in $\Col_0(T_*)$. We build each such coloring uniquely by making
the following choices. First, color the root vertex $r$ with color $0$;
the weight monomial for this step is $x_0$.
Next, color the subtree $T_{1*}$ using any coloring in 
$\Col_{\neq 0}(T_{1*})$; color the subtree $T_{2*}$ using any coloring in
$\Col_{\neq 0}(T_{2*})$; and so on. The colorings of the subtrees
can be chosen independently since the subtrees do not connect with
each other except through the root $r$. The recursion~\eqref{eq:recT}
follows by the Product Rule for Weighted Sets~\cite[\S 5.8]{loehr-comb}.
\end{proof}

%%%%%%%%%%%%%%%%%%%%%%%%%%%%%%%%%%%%%%%%%%%%%%%%%%%%%%%%%%%%%%%%%%%%%%%%%%%%%%
\section{Irreducibility Results}
\label{sec:proof}

The main result proved in this section is the following theorem.

\begin{theorem}\label{thm:XG-irred}
For any non-empty connected graph $G$, there is an integer $M$
%(depending only on the isomorphism class of $G$) 
such that for all $N\geq M$:
\\ (a) $X_G(x_1,\ldots,x_N)$ is irreducible in $\Q[x_1,\ldots,x_N]$.
\\ (b) $X_{\neq 0}(G_*;x_0,\ldots,x_N)$ is irreducible in $\Q[x_0,\ldots,x_N]$.
\end{theorem}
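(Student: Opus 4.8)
The plan is to deduce multivariate irreducibility from irreducibility of a one-variable specialization obtained via Eisenstein's Criterion. Write $n:=|V(G)|$ and $\chi:=\chi(G)$. Both $X_G(x_1,\ldots,x_N)$ and $X_{\neq 0}(G_*;x_0,\ldots,x_N)$ are homogeneous of degree $n$, since every proper coloring contributes a degree-$n$ monomial; hence in any factorization $X_G=AB$ into non-units, the factors $A,B$ are themselves homogeneous of positive degrees $a,b$ with $a+b=n$ (the top and bottom homogeneous components of $AB$ have the same degree, forcing $A$ and $B$ to be homogeneous). The crux --- an auxiliary lemma --- is to produce, for all large $N$, a ring homomorphism $\phi$ that sends one block of variables to a single indeterminate $q$, another block to $1$, and all remaining variables to $0$, such that (i) $\phi(X_G)\in\Z[q]$ has degree exactly $n$, and (ii) $\phi(X_G)$ is irreducible in $\Q[q]$. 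Granting this, applying $\phi$ to $X_G=AB$ gives $\phi(X_G)=\phi(A)\phi(B)$ with $\deg_q\phi(A)\le a$ and $\deg_q\phi(B)\le b$; as these $q$-degrees sum to $\deg_q\phi(X_G)=n=a+b$, we get $\deg_q\phi(A)=a\ge 1$ and $\deg_q\phi(B)=b\ge 1$, exhibiting $\phi(X_G)$ as a product of two non-units in $\Q[q]$ --- contradicting (ii). The same reasoning, with a suitable $\phi$, handles $X_{\neq 0}(G_*)$.

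For part (a), fix a prime $p$ (chosen below) and let $\phi$ send $x_1,\ldots,x_\chi\mapsto q$, send $x_{\chi+1},\ldots,x_{\chi+p}\mapsto 1$, and send every other variable to $0$; this requires $N\ge\chi+p$. Then $\phi(X_G)=\sum_j c_j q^j$, where $c_j$ counts the proper colorings of $G$ with all colors in $\{1,\ldots,\chi+p\}$ and with exactly $j$ vertices receiving one of the ``$q$-colors'' $\{1,\ldots,\chi\}$ (the other $n-j$ vertices receiving one of the $p$ ``$1$-colors''). We have $c_n=\chi_G(\chi)>0$ and $c_j=0$ for $j>n$, giving (i). For (ii), apply Eisenstein at $p$: a $p$-cycle $\sigma$ on the $p$ ``$1$-colors'' induces a $\Z/p$-action on the colorings counted by $c_j$; since $\sigma$ fixes no color, $\sigma\bullet\kappa=\kappa$ would force every vertex to be ``$q$-colored'', i.e.\ $j=n$, so for $j<n$ the action is free and $p\mid c_j$. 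The constant term is $c_0=\chi_G(p)$; writing $\chi_G(k)=a_1k+a_2k^2+\cdots$ (the constant term $\chi_G(0)$ is $0$, and $a_1\ne 0$ because $G$ is connected), we get $c_0=p(a_1+a_2p+\cdots)$, so $p\mid c_0$ and also $p^2\nmid c_0$ whenever $p\nmid a_1$. Thus for any prime $p$ with $p\nmid a_1$ and $p\nmid\chi_G(\chi)$ --- true for all sufficiently large $p$ --- the polynomial $\phi(X_G)$ is Eisenstein at $p$, hence irreducible in $\Q[q]$; taking $M=\chi+p$ finishes (a).

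Part (b) runs the same way, with a small adjustment to accommodate the forbidden root color. Let $\phi$ send $x_0,x_1,\ldots,x_\chi\mapsto q$ (so color $0$ becomes a ``$q$-color''), send $p$ further variables to $1$, and send the rest to $0$. A $p$-cycle on the ``$1$-colors'' fixes color $0$, hence preserves $\Col_{\neq 0}(G_*)$, and again acts freely on the root-$\ne 0$ colorings with fewer than $n$ ``$q$-colored'' vertices, so $p\mid c_j$ for $0<j<n$. Here the leading coefficient is $c_n=\frac{\chi}{\chi+1}\chi_G(\chi+1)$ --- the number of proper colorings of $G$ with colors $\{0,\ldots,\chi\}$ whose root avoids a prescribed color, computed using the symmetry among those $\chi+1$ colors --- a positive integer, while $c_0=\chi_G(p)$ as before. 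Choosing a prime $p$ with $p\nmid a_1$ and $p\nmid\frac{\chi}{\chi+1}\chi_G(\chi+1)$ makes $\phi(X_{\neq 0}(G_*))$ Eisenstein at $p$, and the homogeneity/degree argument concludes.

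I expect the main obstacle to be careful bookkeeping rather than a conceptual leap: one must confirm that the specialization's $q$-degree equals the homogeneous degree $n$ --- which is exactly why the ``$q$-block'' is chosen to have size (at least) $\chi(G)$, forcing $c_n\ne 0$ --- and control the $p$-divisibility of $c_0$. The latter is where connectedness is indispensable, via $a_1=\chi_G'(0)\ne 0$; for disconnected $G$ one has $k^2\mid\chi_G(k)$, so $p^2\mid c_0$ and Eisenstein fails --- as it must, since $X_G$ is then reducible. The pleasant surprise is that the free-action argument gives $p\mid c_j$ for all $0<j<n$ simultaneously, so the entire weight of Eisenstein's hypotheses rests on the two explicit integers $c_n$ and $c_0/p$, which we sidestep by taking $p$ large.
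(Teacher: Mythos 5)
Your proposal is correct, and its engine is the same as the paper's: specialize $X_G$ by sending a block of $\chi(G)$ variables to $q$ and a block of $p$ variables to $1$, verify the Eisenstein hypotheses at $p$ (Theorem~\ref{thm:eisen}), using connectedness exactly as the paper does via the nonvanishing linear coefficient of $\chi_G$ so that $p^2\nmid c_0=\chi_G(p)$. The differences are in the supporting steps, and they are genuine. For the divisibility $p\mid c_j$ ($j<n$) you use a free $\Z/p$-action by cycling the $p$ ``$1$-colors,'' where the paper instead expands $a_j=\sum_S\chi_{G|S}(k)\chi_{G|S^c}(p)$ (Lemma~\ref{lem:use-eisen1}); both are sound. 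More substantively, your passage from univariate back to multivariate irreducibility replaces the paper's Lemma~\ref{lem:new-specz}: instead of exploiting symmetry and nonnegativity of coefficients, you observe that since the specialization attains the full degree $n$ (because $c_n=\chi_G(\chi)>0$, resp.\ $c_n=\tfrac{\chi}{\chi+1}\chi_G(\chi+1)>0$), any homogeneous proper factorization $F=AB$ must specialize with no degree loss in either factor, yielding a proper factorization in $\Q[q]$. This degree-preservation argument is cleaner, sidesteps the paper's cautionary examples (where the specialized degree drops), does not need symmetry --- which lets you prove part (b) directly with a second Eisenstein specialization in which color $0$ joins the $q$-block, whereas the paper deduces (b) from (a) by setting $x_0=0$ and invoking homogeneity (Lemmas~\ref{lem:xi-to-0} and~\ref{lem:irred-rels}) --- and it also absorbs the stability step for $N\geq M$ by sending the extra variables to $0$ inside the same homomorphism. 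The trade-off is that the paper's Lemma~\ref{lem:new-specz} applies to any symmetric polynomial with nonnegative coefficients regardless of whether the specialization preserves the top degree, while your route leans on the specific enumerative fact that $c_n\neq 0$; for these chromatic polynomials that fact is immediate, so nothing is lost here.
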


The proof relies on the following properties of the one-variable
chromatic function $\chi_G$, where $\chi_G(k)$ is the number of
proper colorings of $G$ using $k$ available colors.
First, for any non-empty graph $G$,
$\chi_G$ is a polynomial in $\Z[x]$ that is divisible by $x$,
and $\chi_G(k)>0$ for some positive integer $k$. 
The least such $k$ is the \emph{chromatic number} of $G$.
Second, for any non-empty connected graph $G$, the coefficient $c_1$
of $x$ in $\chi_G$ is nonzero~\cite[Exc. 5.3.10, pg. 230]{west-graph}.
The proof of Theorem~\ref{thm:XG-irred}
will show that we may take $M$ in that theorem
to be $k+p$, where $k$ is the chromatic number of $G$ and
$p$ is the least prime such that $p$ does not divide $\chi_G(k)$
and $p$ does not divide $c_1$.  If $G$ is an $n$-vertex tree with $n>1$, 
then $\chi_G=x(x-1)^{n-1}=x^n+\cdots+(-1)^{n-1}x$, and $G$ has
chromatic number $2$. So when $G$ is a tree, we may take
$k=2$ and $p=3$ to see that Theorem~\ref{thm:XG-irred} holds with $M=5$.
% NL23: $M=4$ MAY also suffice when $G$ is a tree, but I can't prove this yet.

%%%%%%%%%%%%%%%%%%%%%
\subsection{Background on Irreducibility and Specializations}
\label{sec:spec-irred}

This subsection collects some basic facts about irreducible polynomials
needed for our proof of Theorem~\ref{thm:XG-irred}. The key tool 
used is Eisenstein's Criterion for irreducibility of polynomials in $\Q[q]$.

\begin{theorem} 
(Eisenstein's Criterion~\cite[Thm. 6.15]{hungerford})\label{thm:eisen}
 Suppose $f=a_0+a_1q+a_2q^2+\cdots+a_nq^n \in\Q[q]$ 
 is a polynomial of degree $n>0$ with integer coefficients,
 $p$ is a prime, $p$ divides $a_k$ for $0\leq k<n$,
 $p$ does not divide $a_n$, and $p^2$ does not divide $a_0$.
 Then $f$ is irreducible in the polynomial ring $\Q[q]$.
\end{theorem}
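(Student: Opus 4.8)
The statement is the classical Eisenstein Criterion, and the plan is to run the standard reduction-modulo-$p$ argument. The first step is to transfer the problem from $\Q[q]$ to $\Z[q]$. A non-constant polynomial in $\Q[q]$ is reducible exactly when it is a product of two polynomials of degree $\geq 1$, and Gauss's Lemma guarantees that if $f\in\Z[q]$ factors this way over $\Q$ then it already factors that way over $\Z$: there exist $g=\sum_{i=0}^{s}b_iq^i$ and $h=\sum_{j=0}^{t}c_jq^j$ in $\Z[q]$ with $f=gh$ and $s=\deg g\geq 1$, $t=\deg h\geq 1$ (hence $s+t=n$, since $\Z[q]$ is a domain). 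So it suffices to derive a contradiction from the existence of such $g$ and $h$.

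The second step passes to the field $\mathbb{F}_p$. Applying the coefficientwise reduction homomorphism $\Z[q]\to\mathbb{F}_p[q]$, written $u\mapsto\bar u$, the divisibility hypotheses $p\mid a_k$ for $0\leq k<n$ and $p\nmid a_n$ say precisely that $\bar f=\bar a_n\,q^n$ with $\bar a_n\neq 0$ in $\mathbb{F}_p$. Reducing the factorization gives $\bar g\,\bar h=\bar a_n\,q^n$ in $\mathbb{F}_p[q]$. Now I would invoke unique factorization in $\mathbb{F}_p[q]$: since $q$ is irreducible there, the only divisors of $q^n$ up to units are the powers $q^m$ with $0\leq m\leq n$. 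Comparing degrees ($\deg\bar g\leq s$, $\deg\bar h\leq t$, and $\deg\bar g+\deg\bar h=n=s+t$ because $\mathbb{F}_p[q]$ is a domain) forces $\bar g=\beta q^{s}$ and $\bar h=\gamma q^{t}$ for units $\beta,\gamma\in\mathbb{F}_p^{\times}$.

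For the final step, note that $s\geq 1$ and $t\geq 1$, so the constant terms of $\bar g$ and $\bar h$ both vanish; that is, $p\mid b_0$ and $p\mid c_0$. But $a_0=b_0c_0$, whence $p^2\mid a_0$, contradicting the hypothesis $p^2\nmid a_0$. Therefore no factorization into degree-$\geq 1$ pieces exists, and $f$ is irreducible in $\Q[q]$. The only place where any real care is needed is the first step --- the passage from a rational factorization to an integer one --- which is exactly what Gauss's Lemma on contents of polynomials provides; the remaining steps are a routine computation in $\mathbb{F}_p[q]$.
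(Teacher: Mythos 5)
Your proof is correct and is the standard argument for Eisenstein's Criterion: Gauss's Lemma to reduce to an integer factorization, reduction modulo $p$ to force both factors to be unit multiples of powers of $q$ in $\mathbb{F}_p[q]$, and then the $p^2 \nmid a_0$ hypothesis to derive a contradiction from the vanishing constant terms. The paper itself does not prove this theorem---it cites it directly from Hungerford~\cite[Thm.~6.15]{hungerford} as background---so there is no in-paper proof to compare against; your write-up is a clean rendition of the classical proof, with the one genuinely delicate step (Gauss's Lemma) correctly identified and invoked.
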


\begin{lemma}\label{lem:homog}
Suppose $f\in\Q[x_0,\ldots,x_N]$ is reducible and homogeneous of degree
$n>0$. Then $f$ has a proper factorization $f=gh$ 
for some non-constant homogeneous polynomials $g,h\in\Q[x_0,\ldots,x_N]$.
\end{lemma}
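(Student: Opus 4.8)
The plan is to exploit homogeneity to control the possible shapes of a factorization. Suppose $f = gh$ is a proper factorization in $\Q[x_0,\ldots,x_N]$ with $g,h$ non-constant (this exists by the definition of reducibility, since $f$ is not a unit). The first step is to reduce to the case where $g$ and $h$ are themselves homogeneous. Write $g = g_a + g_{a+1} + \cdots + g_b$ and $h = h_c + h_{c+1} + \cdots + h_d$ as sums of homogeneous components, where $g_a, g_b, h_c, h_d$ are all nonzero, $a \le b$, and $c \le d$. Then the lowest-degree homogeneous component of $gh$ is $g_a h_c$ (which is nonzero, since $\Q[x_0,\ldots,x_N]$ is an integral domain), and the highest-degree component is $g_b h_d$ (also nonzero). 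Since $f = gh$ is homogeneous of degree $n$, every homogeneous component of $gh$ of degree other than $n$ must vanish; in particular $a + c = n = b + d$. Combined with $a \le b$ and $c \le d$, this forces $a = b$ and $c = d$, so $g = g_a$ and $h = h_c$ are homogeneous.

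The second step is to check that $g$ and $h$ are non-constant, i.e. that $a > 0$ and $c > 0$. If $a = 0$, then $g = g_0$ is a nonzero constant, hence a unit in $\Q[x_0,\ldots,x_N]$, contradicting that $f = gh$ was a proper factorization. Similarly $c = 0$ is impossible. Therefore $a, c \geq 1$, and $a + c = n$ confirms consistency with $n > 0$. This yields the desired proper factorization $f = gh$ into non-constant homogeneous polynomials.

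There is essentially no hard part here: the argument is a standard fact about graded integral domains, and the only thing to be careful about is making sure one starts from a genuine proper factorization (using that the units of $\Q[x_0,\ldots,x_N]$ are exactly the nonzero constants) and that the integral domain property is invoked to guarantee $g_a h_c \neq 0$ and $g_b h_d \neq 0$. If I wanted to streamline it, I would just compare the lowest and highest degree parts of both sides in one line. I would present it in roughly the two-paragraph form above, making the $a=b$, $c=d$ deduction explicit since that is the crux of why reducibility can be witnessed by homogeneous factors.
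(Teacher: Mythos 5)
Your proof is correct. It is close in spirit to the paper's, but takes a slightly different route that is worth noting. The paper only examines the \emph{top}-degree homogeneous pieces: writing $g = g^{(d)} + \text{(lower)}$ and $h = h^{(e)} + \text{(lower)}$ with $d,e>0$ the top degrees, it observes that $g^{(d)}h^{(e)}$ is the (nonzero, by integral domain) top-degree component of $gh = f$, and since $f$ has a single homogeneous component this product must equal $f$ itself; thus $f = g^{(d)}h^{(e)}$ is the desired homogeneous proper factorization. The paper therefore never needs to show that the original $g,h$ were homogeneous --- it just replaces them. Your argument, by comparing \emph{both} the lowest and highest components and forcing $a=b$, $c=d$, actually establishes the stronger fact that every proper factorization of a nonzero homogeneous polynomial over a field must already be into homogeneous factors. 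Both proofs use the same key ingredients (grading plus no zero divisors); the paper's is a touch shorter, while yours yields the cleaner structural statement about arbitrary factorizations. Either is perfectly acceptable here, since the lemma only requires existence of some homogeneous proper factorization.
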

\begin{proof}
By reducibility of $f$, there is a proper factorization
$f=gh$ where $g,h\in\Q[x_0,\ldots,x_N]$ are not initially known
to be homogeneous.
Write $g=g^{(d)}+\text{(lower terms)}$, where $d>0$ is the maximum
degree of any monomial in $g$, and $g^{(d)}$ is the sum of all
monomials in $g$ having degree $d$. Write
$h=h^{(e)}+\text{(lower terms)}$, where $e>0$ is the maximum degree of
any monomial in $h$, and $h^{(e)}$ is the sum of all monomials in $h$
having degree $e$.  Then $gh=g^{(d)}h^{(e)}+\text{(lower
  terms)}$. Since $\Q[x_0,\ldots,x_N]$ is an integral domain, the
product $g^{(d)}h^{(e)}$ (which is homogeneous of degree $d+e$) cannot
be zero. Since $f$ is homogeneous, this product must equal $f$. 
So $f=g^{(d)}h^{(e)}$, where both factors are homogeneous and non-constant.
\end{proof}

Write $f|_{x_i=0}$ for the specialization of a polynomial
$f$ obtained by setting variable $x_i$ equal to $0$.

\begin{lemma}\label{lem:xi-to-0}
Suppose $f\in\Q[x_0,x_1,\ldots,x_N]$ is homogeneous of degree $n>0$. 
If $f|_{x_i=0}$ is an irreducible polynomial
 in $\Q[x_0,x_1,\ldots,\hat{x}_i,\ldots x_N]$
(where $\hat{x}_i$ means variable $x_i$ is omitted), 
then $f$ is irreducible in $\Q[x_0,x_1,\ldots,x_N]$.
\end{lemma}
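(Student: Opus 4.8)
The plan is to argue by contraposition: assuming $f$ is reducible, I would produce a proper factorization of $f|_{x_i=0}$, contradicting its irreducibility. Since $f$ is homogeneous of degree $n>0$ and reducible, Lemma~\ref{lem:homog} gives a proper factorization $f=gh$ with $g,h\in\Q[x_0,\ldots,x_N]$ both homogeneous and non-constant, of positive degrees $d$ and $e=n-d$. The natural move is then to specialize this identity at $x_i=0$, obtaining $f|_{x_i=0}=(g|_{x_i=0})(h|_{x_i=0})$ in the smaller ring $\Q[x_0,\ldots,\hat{x}_i,\ldots,x_N]$.

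For this to contradict irreducibility of $f|_{x_i=0}$, I must verify two things: that $g|_{x_i=0}$ and $h|_{x_i=0}$ are both non-constant (so the factorization is genuinely proper), and that $f|_{x_i=0}$ is itself non-zero (so we are not in a degenerate situation). The second point is immediate: $f|_{x_i=0}$ is irreducible by hypothesis, hence in particular non-zero and non-constant, so in fact $\deg(f|_{x_i=0})=n$ (the top-degree part of $f$ cannot entirely die under $x_i=0$ since $f|_{x_i=0}$ still has degree — more simply, $f|_{x_i=0}\neq 0$ and is homogeneous of degree $n$ as a specialization of a homogeneous polynomial of degree $n$, so it has degree exactly $n$). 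The first point is the crux. Here I would use homogeneity: $g|_{x_i=0}$ is homogeneous of degree $d$ and $h|_{x_i=0}$ is homogeneous of degree $e$, and since $\Q[x_0,\ldots,\hat{x}_i,\ldots,x_N]$ is an integral domain and their product $f|_{x_i=0}$ is non-zero, neither factor can be zero. A non-zero homogeneous polynomial of positive degree is necessarily non-constant, and $d,e>0$ by the choice of factorization from Lemma~\ref{lem:homog}. Hence $g|_{x_i=0}$ and $h|_{x_i=0}$ are non-constant, giving a proper factorization of $f|_{x_i=0}$ and the desired contradiction.

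The main obstacle, and the reason the homogeneity hypothesis is essential, is exactly the possibility that one of the factors degenerates to a constant (or zero) under the specialization $x_i=0$ — for a general reducible $f$, one could have $f=x_i\cdot(\text{something})$ with $f|_{x_i=0}=0$, or more subtly a factor that becomes constant. Homogeneity of the factors (from Lemma~\ref{lem:homog}) plus the integral-domain property rules this out cleanly, since the product $f|_{x_i=0}$ being non-zero forces both homogeneous factors to remain non-zero, and positive degree is preserved because a homogeneous polynomial of degree $d>0$ that is non-zero still has degree $d>0$. So the proof is short: invoke Lemma~\ref{lem:homog}, specialize, and use that a domain has no zero divisors together with the fact that non-zero homogeneous polynomials of positive degree are non-constant.
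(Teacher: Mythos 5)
Your proof is correct and follows essentially the same route as the paper: invoke Lemma~\ref{lem:homog} to get a proper factorization into homogeneous factors of positive degree, specialize at $x_i=0$, and note that since $f|_{x_i=0}\neq 0$ the specialized factors are nonzero homogeneous of positive degree, hence non-constant, contradicting irreducibility. No gaps; this matches the paper's argument.
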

\begin{proof}
Let $f'=f|_{x_i=0}$.  To get a contradiction, assume $f'$ is
irreducible (hence non-constant and nonzero) and $f$ is reducible.
By the previous lemma, there is a proper factorization
$f=gh$ where $g$ and $h$ are both homogeneous of positive degree.
Letting $g'=g|_{x_i=0}$ and $h'=h|_{x_i=0}$, we have $f'=g'h'$.
Since $f'\neq 0$, $g'$ and $h'$ are nonzero. It follows at once that
$g'$ and $h'$ are homogeneous of positive degree. Thus $f'=g'h'$
is a proper factorization of $f'$, which is a contradiction.
\end{proof}

Homogeneity of $f$ is needed in Lemma~\ref{lem:xi-to-0}.
For example, $f=x(y+1)$ is reducible in $\Q[x,y]$,
yet $f|_{y=0}=x$ is irreducible in $\Q[x]$.

\begin{lemma}\label{lem:irred-rels}
Let $G_*$ be a nonempty rooted graph with chromatic number $k$.  
For all $N\geq M\geq k$:
\begin{enumerate}
\item If $X_G(x_1,\ldots,x_M)$ is irreducible, then 
 $X_{\neq 0}(G_*;x_0,x_1,\ldots,x_M)$ is irreducible.
\item If $X_G(x_1,\ldots,x_M)$ is irreducible, then
  $X_G(x_1,\ldots,x_N)$ is irreducible.
\item If $X_{\neq 0}(G_*;x_0,\ldots,x_M)$ is irreducible, then
  $X_{\neq 0}(G_*;x_0,\ldots,x_N)$ is irreducible.
\end{enumerate}
\end{lemma}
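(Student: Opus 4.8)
The plan is to prove each of the three implications in Lemma~\ref{lem:irred-rels} by exhibiting the smaller polynomial as a specialization of the larger one obtained by setting one extra variable to $0$, and then invoking the homogeneity-based specialization principle of Lemma~\ref{lem:xi-to-0}. The key observation making this work is the stability property already recorded in the excerpt: setting $x_N=0$ in $X_G(x_1,\ldots,x_N)$ yields $X_G(x_1,\ldots,x_{N-1})$, and likewise setting the last variable to $0$ in $X_{\neq 0}(G_*;x_0,\ldots,x_N)$ yields $X_{\neq 0}(G_*;x_0,\ldots,x_{N-1})$. One also needs that these specializations do not collapse to something constant or zero, which is where the hypothesis $N\geq M\geq k$ (chromatic number) enters: as long as at least $k$ variables remain, there is at least one proper coloring of $G$ using only those colors, so the polynomial is a genuinely non-constant homogeneous polynomial of degree $n=|V(G_*)|>0$.

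For part (2), I would argue by induction on $N$. The base case $N=M$ is the hypothesis. For the inductive step, suppose $X_G(x_1,\ldots,x_{N-1})$ is irreducible with $N-1\geq M\geq k$; then $X_G(x_1,\ldots,x_N)$ is homogeneous of degree $n>0$, and its specialization at $x_N=0$ equals $X_G(x_1,\ldots,x_{N-1})$, which is irreducible by the inductive hypothesis. Lemma~\ref{lem:xi-to-0} (applied with the variable $x_N$ in the role of $x_i$) then gives irreducibility of $X_G(x_1,\ldots,x_N)$. Part (3) is entirely parallel: $X_{\neq 0}(G_*;x_0,\ldots,x_N)$ is homogeneous of degree $n>0$ (every proper coloring of $G_*$ uses exactly $n$ vertices), setting $x_N=0$ recovers $X_{\neq 0}(G_*;x_0,\ldots,x_{N-1})$, and induction plus Lemma~\ref{lem:xi-to-0} finishes it.

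Part (1) is slightly different because we are moving between two different polynomials rather than changing the variable count. Here the plan is to realize $X_G(x_1,\ldots,x_M)$ as the specialization of $X_{\neq 0}(G_*;x_0,x_1,\ldots,x_M)$ obtained by setting $x_0=0$. This is precisely the statement, noted in Section~\ref{subsec:def-rooted}, that the constant coefficient of $X_{\neq 0}(G_*)$ viewed as a polynomial in $z=x_0$ is Stanley's $X_G$ in the variables $x_1,\ldots,x_M$: a proper coloring counted by $X_{\neq 0}(G_*)$ in which the root avoids color $0$ contributes a monomial with no factor of $x_0$ exactly when no vertex uses color $0$, so setting $x_0=0$ kills all other terms and leaves $X_G(x_1,\ldots,x_M)$. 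Since $X_{\neq 0}(G_*;x_0,\ldots,x_M)$ is homogeneous of degree $n>0$, Lemma~\ref{lem:xi-to-0} with $x_i=x_0$ shows that irreducibility of $X_G(x_1,\ldots,x_M)$ forces irreducibility of $X_{\neq 0}(G_*;x_0,\ldots,x_M)$.

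I do not expect a serious obstacle here — the lemma is essentially a bookkeeping consequence of homogeneity, stability under setting variables to zero, and Lemma~\ref{lem:xi-to-0}. The one point requiring a little care is making sure the relevant specializations are genuinely non-constant and nonzero so that Lemma~\ref{lem:xi-to-0} applies with its degree hypothesis satisfied; this is guaranteed by the assumption $M\geq k$, which ensures $G$ admits a proper coloring with the available colors and hence that each polynomial in question is homogeneous of the full positive degree $n=|V(G_*)|$. The chaining of parts (so that eventually (1) feeds into the proof of Theorem~\ref{thm:XG-irred} once irreducibility of $X_G(x_1,\ldots,x_M)$ is established via Eisenstein) is automatic from the three implications as stated.
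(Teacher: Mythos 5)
Your proposal matches the paper's proof essentially line for line: part (1) sets $x_0=0$ in $X_{\neq 0}(G_*;x_0,\ldots,x_M)$ to recover $X_G(x_1,\ldots,x_M)$ and applies Lemma~\ref{lem:xi-to-0}, while parts (2) and (3) set the last variables to $0$ one at a time (your induction on $N$ is just this iteration made explicit), with $M\geq k$ guaranteeing the polynomials remain nonzero and homogeneous of degree $|V(G_*)|>0$. No discrepancies.
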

\begin{proof}
All polynomials mentioned in the lemma are homogeneous of degree
$|V(G_*)|$ and are nonzero, since the hypothesis $N\geq M\geq k$
ensures there are enough colors to produce at least one
coloring satisfying all needed conditions. 
Part~(1) follows from Lemma~\ref{lem:xi-to-0}, since setting $x_0=0$ in 
$X_{\neq 0}(G_*;x_0,\ldots,x_M)$ produces $X_G(x_1,\ldots,x_M)$.
Parts~(2) and (3) follow from Lemma~\ref{lem:xi-to-0}
by setting $x_N=0,\ldots,x_{M+1}=0$ one at a time.
\end{proof}

%%%%%%%%%%%%%%%%%%%%%%%%%%%%%%%%%%%
\subsection{Proof of Theorem~\ref{thm:XG-irred}}
\label{subsec:prove-XG-irred}

This section proves Theorem~\ref{thm:XG-irred}(a).
Theorem~\ref{thm:XG-irred}(b) follows at once from
part~(a) and Lemma~\ref{lem:irred-rels}. 
%We may assume that $G$ has more than one vertex.
We begin with a lemma containing the core idea of the proof.

\begin{lemma}\label{lem:use-eisen1}
Let $G$ be a non-empty connected graph with chromatic polynomial
$\chi_G\in\Z[x]$ and chromatic number $k$.
Let $p$ be a prime such that $p$ does not divide $\chi_G(k)$ and
$p$ does not divide the coefficient of $x$ in $\chi_G$. Let $M=k+p$.
Let $f\in\Q[q]$ be the specialization of $X_G(x_1,\ldots,x_M)$
obtained by setting $x_1=x_2=\cdots=x_k=q$ 
and $x_{k+1}=x_{k+1}=\cdots=x_{k+p}=1$.
Then $f$ is irreducible in $\Q[q]$.
\end{lemma}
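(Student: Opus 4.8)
The plan is to apply Eisenstein's Criterion (Theorem~\ref{thm:eisen}) to the polynomial $f=a_0+a_1q+\cdots+a_nq^n\in\Z[q]$ with the given prime $p$, where $n=|V(G)|$. So I need to understand the coefficients $a_j$ combinatorially. Under the specialization $x_1=\cdots=x_k=q$, $x_{k+1}=\cdots=x_{k+p}=1$, a proper coloring $\kappa$ of $G$ using the colors $\{1,\ldots,k+p\}$ contributes $q^j$ where $j$ is the number of vertices colored with one of the ``$q$-colors'' $\{1,\ldots,k\}$. Equivalently, writing each proper coloring as a pair (a subset $S\subseteq V(G)$ of vertices that get $q$-colors, together with a proper coloring of $G[S]$ using $k$ colors and a proper coloring of $G[V\setminus S]$ using $p$ colors, with no compatibility constraint across the two parts since the color sets are disjoint), we get
\[
 a_j=\sum_{\substack{S\subseteq V(G)\\|S|=j}} \chi_{G[S]}(k)\,\chi_{G[V\setminus S]}(p).
\]
So the three things I must check are: (i) $p\mid a_j$ for $0\le j<n$; (ii) $p\nmid a_n$; (iii) $p^2\nmid a_0$.

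For (ii): $a_n$ corresponds to $S=V(G)$, giving $a_n=\chi_G(k)\cdot\chi_\emptyset(p)=\chi_G(k)$, which is not divisible by $p$ by hypothesis; this also shows $\deg f=n>0$. For (iii): $a_0$ corresponds to $S=\emptyset$, giving $a_0=\chi_\emptyset(k)\cdot\chi_G(p)=\chi_G(p)$. Now $\chi_G$ is divisible by $x$ and has $x$-coefficient $c_1$ with $p\nmid c_1$; moreover $p\ge k+1$ (actually $p$ could be anything $\ge 2$, but $M=k+p$ and we need $p$ distinct ``$1$-colors'', and $p$ colors suffice to properly color $G$ only if $p\ge k$) — wait, I should be careful: I claim $p\ge k$. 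Indeed the least prime $p$ with $p\nmid\chi_G(k)$ and $p\nmid c_1$ satisfies $p\ge 2$, but I actually need $\chi_G(p)\ne 0$, i.e.\ $p\ge k$. This is the one subtle point: is $p\ge k$ automatic? If $p<k$ then $\chi_G(p)=0$ since $G$ can't be properly colored with fewer than $k$ colors — but then $a_0=0$, so $p^2\mid a_0$, and Eisenstein fails. So I must argue $p\ge k$, or rather that the relevant prime can be taken $\ge k$. Actually $\chi_G(k)$ has all prime factors, and in the worst case we might be forced to pick a small $p$; but note that if a prime $p<k$ divides neither $\chi_G(k)$ nor $c_1$, then $p$ still works for parts (i) and (ii), and for (iii) we'd have $a_0=\chi_G(p)=0$. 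Hmm — so the lemma as stated implicitly needs $p\ge k$; I expect the paper's convention is that $p$ is chosen to also satisfy this, or that one observes $\chi_G(p)\ne 0 \iff p\ge k$ and handles it. I will assume $p\ge k$ (equivalently, interpret the hypothesis as choosing the least prime $\ge k$ with the stated non-divisibility; for trees $k=2$, $p=3$ works). Granting $p\ge k$, we have $\chi_G(p)\ne 0$, and writing $\chi_G(x)=c_1 x + c_2 x^2+\cdots$, we get $\chi_G(p)=c_1 p + c_2 p^2+\cdots$, so $a_0=\chi_G(p)\equiv c_1 p \pmod{p^2}$, and since $p\nmid c_1$ we conclude $p^2\nmid a_0$. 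This gives (iii).

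The main obstacle is part (i): showing $p\mid a_j$ for each $0\le j<n$. Here is the idea. Fix $j<n$, so $V\setminus S\ne\emptyset$ for every $S$ appearing in the sum for $a_j$. Consider the action on proper colorings of $G$ (with the specialized colors) given by cyclically permuting the $p$ ``$1$-colors'' $\{k+1,\ldots,k+p\}$ among themselves — this is a $\Z/p\Z$-action that preserves the weight $q^j$ (it doesn't touch the $q$-colors) and preserves properness. So it acts on the set of colorings contributing to $a_j$. By the orbit-counting principle, $a_j \equiv (\text{number of fixed points}) \pmod p$. A coloring is fixed by this cyclic shift iff no vertex uses any of the colors $\{k+1,\ldots,k+p\}$, i.e.\ iff $S=V(G)$, i.e.\ iff $j=n$. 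Since $j<n$, there are no fixed points, so $p\mid a_j$. (Equivalently and more directly: $a_j=\sum_{|S|=j}\chi_{G[S]}(k)\chi_{G[V\setminus S]}(p)$, and for $S\ne V$ the graph $G[V\setminus S]$ is non-empty, so $\chi_{G[V\setminus S]}$ is divisible by $x$; but I need divisibility by $p$, not just that $p\mid\chi_H(p)$ for non-empty $H$ — and indeed $\chi_H(p)=\sum_{i\ge 1}c_i^{(H)}p^i$ is divisible by $p$ whenever $\chi_H$ has zero constant term, which holds for every non-empty $H$. So each summand with $S\ne V$ is divisible by $p$, hence $a_j$ is.) This direct argument is cleanest: every non-empty graph $H$ has $\chi_H(0)=0$, so $p\mid\chi_H(p)$; for $j<n$ every term in $a_j$ has non-empty $G[V\setminus S]$; done. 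With (i), (ii), (iii) established, Eisenstein's Criterion gives that $f$ is irreducible in $\Q[q]$, completing the proof.
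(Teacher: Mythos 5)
Your proof is essentially the same as the paper's: both derive the formula $a_j=\sum_{|S|=j}\chi_{G[S]}(k)\chi_{G[V\setminus S]}(p)$ and verify Eisenstein's three conditions, with your preferred ``cleaner'' argument for $p\mid a_j$ (each $\chi_{G[V\setminus S]}$ has zero constant term when $j<n$) being exactly the paper's. Your cyclic-shift orbit argument is a pleasant alternative for (i), but it is not what the paper does.

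The one thing you flagged and then waved away --- whether $p\geq k$ --- is not actually a gap, and you already had the resolution in hand. Your own computation in (iii) shows $a_0=\chi_G(p)\equiv c_1p\not\equiv 0\pmod{p^2}$ using only $c_1\neq 0$ (true for connected $G$) and $p\nmid c_1$; in particular $\chi_G(p)\neq 0$, so $p\geq k$ is \emph{forced} by the hypotheses rather than something to assume. Alternatively, since $\chi(G)=k$, every proper coloring with $k$ colors is surjective and $S_k$ acts freely on them, so $k!\mid\chi_G(k)$; the hypothesis $p\nmid\chi_G(k)$ then directly gives $p>k$. Either observation closes the issue; as written, your ``I will assume $p\geq k$'' leaves a dangling loose end that the paper has no need for (it simply notes $\chi_G(p)=c_1p+(\text{multiple of }p^2)$).
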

\begin{proof}
Let $n=|V(G)|$; we may assume $n>1$. Note that $f=\sum_{j=0}^n a_jq^j$,
where $a_j\in\Z$ is the number of proper colorings 
$\kappa:V(G)\rightarrow\{1,2,\ldots,M\}$ such that 
the number of $v$ with $\kappa(v)\in\{1,2,\ldots,k\}$ is $j$. 
This holds since specializing $X_G$ to $f$
replaces the weight monomial $x_1^{e_1}x_2^{e_2}\cdots x_M^{e_M}$ of 
each proper coloring $\kappa$ by $q^{e_1+e_2+\cdots+e_k}$, 
where $e_1+\cdots+e_k$ is the number of vertices that 
$\kappa$ colors using $1,2,\ldots,k$.

The following counting argument leads to a formula for $a_j$,
where $0\leq j\leq n$ is fixed.
To build a proper coloring counted by $a_j$,
first choose a $j$-element subset $S$ of $V(G)$,
and let $S^c=\{v\in V(G): v\not\in S\}$.
Let $G|S$ be the graph with vertex set $S$ and edge set
consisting of all edges in $G$ joining two vertices in $S$,
and define $G|S^c$ similarly.
Choose a proper coloring $\kappa_1$ of $G|S$ using color set $\{1,2,\ldots,k\}$,
and choose a proper coloring $\kappa_2$ of $G|S^c$ 
using color set $\{k+1,k+2,\ldots,k+p\}$. The union of $\kappa_1$ and $\kappa_2$
is a proper coloring of $G$ counted by $a_j$, and all such colorings
arise uniquely by this construction process. In summary,
\begin{equation}\label{eq:aj-formula}
 a_j=\sum_{\substack{S\subseteq V(G):\\ |S|=j}} 
\chi_{G|S}(k)\chi_{G|S^c}(p). 
\end{equation}
In particular, $a_n=\chi_G(k)$ and $a_0=\chi_G(p)$.

We now confirm that $f$ satisfies the hypotheses of Eisenstein's Criterion
(Theorem~\ref{thm:eisen}). By choice of $k$ and $p$, $a_n$ is nonzero
and not divisible by $p$.  We know $\chi_G$ is $c_1x$ plus higher terms all 
divisible by $x^2$, and $p$ does not divide $c_1$. Replacing $x$ by $p$, 
we see that $\chi_G(p)$ is $c_1p$ plus a multiple of $p^2$, so $p^2$ does not
divide $a_0=\chi_G(p)$. For $0\leq j<n$, each subgraph $S^c$ used in the sum
for $a_j$ is nonempty with $n-j>0$ vertices. 
So each $\chi_{G|S^c}$ is divisible by $x$,
   each $\chi_{G|S^c}(p)$ is divisible by $p$, and $a_j$ itself is
therefore divisible by $p$. By Eisenstein's Criterion,
$f$ is irreducible in $\Q[q]$.
\end{proof}

Lemma~\ref{lem:use-eisen1} suggests that $X_G(x_1,\ldots,x_M)$ is 
irreducible, since its specialization $f(q)$ is irreducible.
However, we need an additional technical argument to rule out 
the possibility that a proper factorization of $X_G$ happens to
specialize to a trivial factorization of $f(q)$. As an
example of this phenomenon, note $(x_1+x_2)(x_3+x_4+x_5)$ is 
reducible in $\Q[x_1,\ldots,x_5]$, but specializing $x_1=x_2=q$
and $x_3=x_4=x_5=1$ yields the irreducible polynomial $6q$ in $\Q[q]$.
As another example, the symmetric polynomial 
$(-x_1+x_2+x_3)(x_1-x_2+x_3)(x_1+x_2-x_3)$ is reducible in $\Q[x_1,x_2,x_3]$,
but setting $x_1=x_2=q$ and $x_3=1$ yields the irreducible polynomial
$2q-1$ in $\Q[q]$.  The next lemma addresses this issue and, combined with
Lemma~\ref{lem:use-eisen1} and Lemma~\ref{lem:irred-rels}(2), 
completes the proof of Theorem~\ref{thm:XG-irred}(a).

\begin{lemma}\label{lem:new-specz}
Suppose $2\leq k\leq M$ and $F(x_1,\ldots,x_M)$ is a symmetric polynomial 
in $\Q[x_1,\ldots,x_M]$ with nonnegative coefficients.
Let $q$ be a formal variable and let $f=F(q,\ldots,q,1\ldots,1)\in\Q[q]$ 
be obtained from $F$ by setting $x_1=\cdots=x_k=q$ and $x_{k+1}=\cdots=x_M=1$. 
If $f$ is irreducible in $\Q[q]$, 
then $F$ is irreducible in $\Q[x_1,\ldots,x_M]$.
\end{lemma}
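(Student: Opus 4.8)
The plan is to argue by contrapositive: assume $F$ is reducible in $\Q[x_1,\ldots,x_M]$ and produce a nontrivial factorization of $f$ in $\Q[q]$, contradicting irreducibility of $f$. The first issue is that $F$ need not be homogeneous, but the polynomials we ultimately care about ($X_G$) are; so I would first reduce to the homogeneous case. Actually, a cleaner route, which avoids homogeneity entirely, is to exploit the hypothesis that $F$ has \emph{nonnegative} coefficients together with $k\geq 2$. The key point is this: the specialization map $x_1=\cdots=x_k=q$, $x_{k+1}=\cdots=x_M=1$ sends a monomial $x_1^{e_1}\cdots x_M^{e_M}$ to $q^{e_1+\cdots+e_k}$, and in particular the total degree of $f$ in $q$ equals the maximum, over monomials $x^e$ appearing in $F$, of $e_1+\cdots+e_k$. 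Because $F$ is symmetric, if $F$ has any monomial of total degree $n$, then it has the monomial $x_1^n$ (up to permuting variables), which contributes $q^n$; hence $\deg_q f = \deg F =: n$. Moreover, since all coefficients of $F$ are nonnegative, there is no cancellation when we collect the top-degree terms: the coefficient of $q^n$ in $f$ is a sum of nonnegative contributions including the (positive) coefficient of $x_1^n$ in $F$, so it is strictly positive. In particular $f$ is nonzero of degree exactly $n>0$ (the case $n=0$ would make $f$ a nonzero constant, hence a unit, not irreducible).

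Next, take a proper factorization $F = GH$ with $G,H\in\Q[x_1,\ldots,x_M]$ non-units. I want to conclude that the induced factorization $f = g h$, where $g = G(q,\ldots,q,1,\ldots,1)$ and $h = H(q,\ldots,q,1,\ldots,1)$, is proper in $\Q[q]$, i.e. that neither $g$ nor $h$ is a constant. Suppose, for contradiction, that $g$ is a constant $c\in\Q$. Since $f=gh$ has positive degree, $c\neq 0$. The heart of the argument is to show that if $G$ specializes to a nonzero constant under this substitution, then $G$ must itself be a constant polynomial, contradicting that $G$ is a non-unit. This is where the nonnegativity of the coefficients of $F$ (hence, with a little care, of $G$) and the condition $k\geq 2$ enter decisively, and it is the step I expect to be the main obstacle. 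The naive hope — that a nonzero polynomial specializing to a constant must be constant — is false in general (the two examples in the text, $(x_1+x_2)(x_3+x_4+x_5)$ and the Heron-type product, show exactly this), so the argument must genuinely use the structure available.

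Here is how I would handle that obstacle. Since $F = GH$ has nonnegative coefficients and $\Q[x_1,\ldots,x_M]$ is a UFD, I can normalize: write each of $G,H$ as a rational scalar times a primitive integer polynomial; absorbing scalars, I may assume $G$ and $H$ lie in $\Z[x_1,\ldots,x_M]$ with the product $GH$ having nonnegative coefficients. Now specialize only the variables $x_{k+1},\ldots,x_M$ to $1$, leaving $x_1,\ldots,x_k$ alone; call the results $\widehat F,\widehat G,\widehat H\in\Q[x_1,\ldots,x_k]$, so $\widehat F = \widehat G\widehat H$. Then $g$ is obtained from $\widehat G$ by the \emph{diagonal} substitution $x_1=\cdots=x_k=q$. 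The claim becomes: if this diagonal substitution sends $\widehat G$ to a nonzero constant, and $k\geq 2$, while $\widehat G\widehat H$ has nonnegative coefficients, then $\widehat G$ is constant. To see this, consider the top-degree form $\widehat G^{(d)}$ (the sum of the degree-$d$ terms of $\widehat G$, where $d=\deg \widehat G$) and likewise $\widehat H^{(e)}$; as in Lemma~\ref{lem:homog}, $\widehat G^{(d)}\widehat H^{(e)}$ is the top form of $\widehat F$ and is nonzero. Under the diagonal substitution $x_i\mapsto q$, a degree-$d$ form in $k$ variables goes to (its coefficient-sum) times $q^d$. If $\widehat G$ is non-constant then $d\geq 1$; for $g$ to be a constant we would need the coefficient-sum of $\widehat G^{(d)}$ to vanish. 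But the coefficient-sum of $\widehat G^{(d)}$ times that of $\widehat H^{(e)}$ equals the coefficient-sum of the top form of $\widehat F = \widehat G\widehat H$, which is the coefficient-sum of the degree-$(d+e)$ part of a polynomial with nonnegative coefficients whose top form is nonzero — hence strictly positive, a contradiction. (Symmetrically, $h$ cannot be constant.) Therefore $f=gh$ is a proper factorization in $\Q[q]$, contradicting irreducibility of $f$, and $F$ is irreducible. The one bookkeeping point to be careful about is the passage from "$F$ has nonnegative coefficients" to controlling signs in the factors $G,H$; the argument above only uses nonnegativity of the coefficient-sum of the \emph{top form of $F$}, which is automatic, so no hypothesis on the signs of $G$ or $H$ individually is actually needed — that is the crux that makes the proof go through.
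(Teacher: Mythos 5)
There is a genuine gap, and it sits exactly where you predicted the main obstacle would be. The claim you actually prove is only: if $g=G(q,\ldots,q,1,\ldots,1)$ is a nonzero constant, then $\widehat{G}=G(x_1,\ldots,x_k,1,\ldots,1)$ is constant. That is not the claim you need, which is that $G$ itself is constant; the two are not equivalent, since a non-unit $G$ can easily have constant $\widehat{G}$ --- for instance when $G$ involves only the variables $x_{k+1},\ldots,x_M$, or when its (uncontrolled-sign) coefficients cancel upon setting those variables to $1$, as in $G=x_1x_{k+1}-x_1x_{k+2}+1$. So from ``$g$ is constant'' you reach no contradiction, and the conclusion that $f=gh$ is a proper factorization is unjustified. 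The missing ingredient is the symmetry of $F$, which your argument never uses at the decisive point; and it cannot be dispensed with, because the lemma is false without it: for $F=(x_1+x_2)(x_3+x_4+x_5)$ with $k=2$, $M=5$, the factor $H=x_3+x_4+x_5$ is a non-unit, $\widehat{H}=3$ is constant, your argument yields no contradiction (correctly so), and indeed $f=6q$ is irreducible while $F$ is reducible. Even for symmetric $F$, a particular proper factorization can specialize trivially (e.g.\ $F=x_1x_2x_3$, $k=2$, $G=x_1x_2$, $H=x_3$), so the factorization must first be normalized.

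The paper's proof supplies exactly this normalization: since $G$ and $H$ are non-constant, some $x_i$ appears in $G$ and some $x_j$ in $H$, and because $F$ is symmetric one may permute the variables in the factorization $F=GH$ so that $x_1$ appears in $G$ and $x_1$ or $x_2$ appears in $H$ (this is also where $k\geq 2$ enters). Then, working in $R[x_1,\ldots,x_k]$ with $R=\Q[x_{k+1},\ldots,x_M]$ and grading by degree in $x_1,\ldots,x_k$, both factors have positive top degrees $d_1,d_2>0$; nonnegativity of the coefficients of $F$ shows the $q$-degrees do not drop when $x_1=\cdots=x_k$ are set to $q$, and only afterwards are $x_{k+1},\ldots,x_M$ set to $1$, with the positivity argument $\ov{r}\,\ov{s}=\ov{u}>0$ guaranteeing the leading coefficients survive. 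Note that the order of specializations matters: setting $x_{k+1}=\cdots=x_M=1$ first, as you do, exposes you to the cancellation in $\widehat{G}$ noted above, which the paper's order avoids; your coefficient-sum-of-top-form device is essentially the paper's $\ov{u}>0$ step, but it must be applied after the symmetry normalization to do its job. (A small separate slip: symmetry does not imply that a degree-$n$ monomial forces $x_1^n$ to appear --- take $F=x_1x_2\cdots x_M$ --- so $\deg_q f=\deg F$ can fail; this is inessential to your argument, but that sentence should be removed.)
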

\begin{proof}
We prove the contrapositive. Given $F$ and $f$ as in the lemma setup, assume
there is a proper factorization $F=GH$, where $G,H\in\Q[x_1,\ldots,x_M]$
are not constant. Choose $i,j$ so that $x_i$ appears in $G$ and 
$x_j$ appears in $H$. Since $F$ is symmetric, we can assume
$i=1$ and $j\in\{1,2\}$ by applying a permutation of the variables
to the factorization $F=GH$.

Let $R=\Q[x_{k+1},\ldots,x_M]$ and view $F,G,H$ as elements of the
ring $R[x_1,\ldots,x_k]$. This ring is graded by total degree in
the variables $x_1,\ldots,x_k$. So we can write
\[ F=F^{(d)}+\mbox{(lower terms)},\quad
   G=G^{(d_1)}+\mbox{(lower terms)},\quad
   H=H^{(d_2)}+\mbox{(lower terms)}, \]
where $F^{(d)}$ is the sum of all the highest degree monomials in $F$
(say of degree $d$), and similarly for $G^{(d_1)}$ and $H^{(d_2)}$. 
Since $F=GH$ and there are no zero divisors, we must have
$F^{(d)}=G^{(d_1)}H^{(d_2)}$ and $d=d_1+d_2$. Moreover,
$d_1>0$ since $x_1$ appears in $G$, and $d_2>0$ since $x_1$ or $x_2$
appears in $H$ and $k\geq 2$.

We now apply the specialization that replaces each variable $x_1,\ldots,x_k$
by the new formal variable $q$.  For each $P\in R[x_1,\ldots,x_k]$,
let $P'\in R[q]$ be the image of $P$ under this specialization.
We have $F'=G'H'$ where $F'$, $G'$, $H'$ are in $R[q]$. Write
\[ F'=uq^e+\mbox{(lower terms)},\quad
   G'=rq^{e_1}+\mbox{(lower terms)},\quad
   H'=sq^{e_2}+\mbox{(lower terms)}, \]
where $u,r,s\in R$ are nonzero. Since $R[q]$ is graded by degree in $q$
and has no zero divisors, $u=rs$ and $e=e_1+e_2$. On one hand, $e_1\leq d_1$ 
and $e_2\leq d_2$ since the degree in $q$ of each monomial after the 
specialization matches the original total degree in $x_1,\ldots,x_k$, but 
some terms might perhaps cancel. On the other hand, $e=d$ since all 
coefficients of $F$ are nonnegative. But now $e=e_1+e_2\leq d_1+d_2=d=e$, so
we must actually have $e_1=d_1>0$ and $e_2=d_2>0$. In other words,
$G'$ and $H'$ are non-constant polynomials in $q$. 

For all $p\in R=\Q[x_{k+1},\ldots,x_M]$, 
write $\ov{p}$ for the specialization of $p$ where $x_{k+1}=\cdots=x_M=1$.
Now $u$ has all nonnegative coefficients (since $F$ does),
so $\ov{u}>0$. Then $\ov{r}\cdot\ov{s}=\ov{rs}=\ov{u}>0$,
and hence $\ov{r}\neq 0\neq \ov{s}$.
So when we further specialize the factorization $F'=G'H'$ by
setting $x_{k+1}=\cdots=x_M=1$, we get a factorization
$f=gh$ in $\Q[q]$ where $g$ still has degree $d_1>0$ in $q$
and $h$ still has degree $d_2>0$ in $q$. Thus, $f$ is reducible in $\Q[q]$.
\end{proof}

\subsection{Irreducible One-Variable Specializations of $X_{\neq 0}(G_*)$}
\label{subsec:more-irred-thms}

By modifying the proof idea in Lemma~\ref{lem:use-eisen1}, we can
prove some additional irreducibility results.
For each prime $p$ and rooted graph $G_*$, let $X_i^{2,p}(G_*)\in\Z[q]$
be the specialization of $X_i(G_*;x_0,x_1,\ldots,x_{p+1})$ obtained
by setting $x_0=x_1=q$ and $x_2=\cdots=x_{p+1}=1$. 
Let $X_{\neq i}^{2,p}(G_*)$ be the analogous specialization of 
$X_{\neq i}(G_*;x_0,x_1,\ldots,x_{p+1})$.

\begin{theorem}\label{thm:better}
Let $G_*$ be a connected bipartite rooted graph with $n$ vertices.
For any prime $p$ that does not divide the coefficient of $x$ in $\chi_G$, 
$X_{\neq 0}^{2,p}(G_*)$ is monic of degree $n$ and irreducible in $\Q[q]$.
The conclusion always holds for $p=2$.
When $G_*$ is a rooted tree, the conclusion holds for all primes $p$.
\end{theorem}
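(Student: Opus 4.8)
The plan is to run the Eisenstein argument of Lemma~\ref{lem:use-eisen1} directly on the one-variable polynomial $X_{\neq 0}^{2,p}(G_*)$ itself, so that no lifting step in the spirit of Lemma~\ref{lem:new-specz} is required. Write $n=|V(G)|$ and $X_{\neq 0}^{2,p}(G_*)=\sum_{j=0}^{n}a_jq^j$, where $a_j\in\Z_{\geq 0}$ is the number of proper colorings $\kappa\colon V(G)\to\{0,1,\ldots,p+1\}$ with $\kappa(r)\neq 0$ (here $r$ is the root) such that exactly $j$ vertices receive a color in $\{0,1\}$. Exactly as in the derivation of~\eqref{eq:aj-formula}, I would decompose such a coloring according to the set $S\subseteq V(G)$ of vertices colored from $\{0,1\}$: writing $b_S$ for the number of proper colorings of $G|S$ with colors in $\{0,1\}$ that send $r$ to a color $\neq 0$ whenever $r\in S$, one gets $a_j=\sum_{|S|=j}b_S\,\chi_{G|S^c}(p)$, since the color sets $\{0,1\}$ and $\{2,\dots,p+1\}$ are disjoint.

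Next I would pin down $a_n$ and $a_0$. For $a_n$, all $n$ vertices get colors in $\{0,1\}$, so $\kappa$ is a proper $2$-coloring of the connected bipartite graph $G$; there are exactly two such colorings, swaps of one another, and exactly one of them sends $r$ to $1$ rather than $0$. Hence $a_n=1$, so $X_{\neq 0}^{2,p}(G_*)$ is monic of degree $n$. This is the precise place where both bipartiteness and the restriction on the root are used, and it is why we do \emph{not} need the extra hypothesis $p\nmid\chi_G(2)$ that appears in Lemma~\ref{lem:use-eisen1} (for a connected bipartite graph $\chi_G(2)=2$, which would obstruct that lemma at $p=2$). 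For $a_0$, no vertex is colored from $\{0,1\}$, the root condition is vacuous, and $a_0=\chi_G(p)$; since $\chi_G=c_1x+(\text{terms divisible by }x^2)$, we get $\chi_G(p)=c_1p+(\text{a multiple of }p^2)$, so $p^2\nmid a_0$ exactly when $p\nmid c_1$, which is our hypothesis on $p$.

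For $0\leq j<n$, each graph $G|S^c$ occurring in the formula for $a_j$ is nonempty, so $\chi_{G|S^c}$ is divisible by $x$, hence $\chi_{G|S^c}(p)$ is divisible by $p$, and therefore $p\mid a_j$. Together with $p\nmid a_n=1$ and $p^2\nmid a_0$, Eisenstein's Criterion (Theorem~\ref{thm:eisen}) shows $X_{\neq 0}^{2,p}(G_*)$ is irreducible in $\Q[q]$, which is the main assertion.

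It remains to dispatch the two special cases. When $G_*$ is a rooted tree, $\chi_G=x(x-1)^{n-1}$, so $c_1=(-1)^{n-1}$ and no prime divides $c_1$; thus every prime $p$ is admissible. For $p=2$ and arbitrary connected bipartite $G_*$, I would show $2\nmid c_1$ automatically: since $G$ is connected and bipartite it has exactly two proper $2$-colorings, so $\chi_G(2)=2$, while $\chi_G(2)=\sum_{i\geq 1}c_i2^i\equiv 2c_1\pmod 4$ forces $c_1$ to be odd. I do not expect a genuine obstacle in any of this; the only points needing care are the bookkeeping of the root condition in the definitions of $a_n$ and $b_S$, the small mod-$4$ observation just used, and degenerate cases such as $n=1$ (where $G$ is a single vertex, $\chi_G=x$, and $X_{\neq 0}^{2,p}(G_*)$ specializes to $q+p$), all of which fit the pattern.
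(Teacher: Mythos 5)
Your proposal is correct and, for the main argument, follows exactly the paper's route: decompose $a_j$ according to which vertices receive colors from $\{0,1\}$, use connectivity and bipartiteness to force $a_n=1$ (this is the key point that lets one drop the extra hypothesis $p\nmid\chi_G(k)$ from Lemma~\ref{lem:use-eisen1}), note $a_0=\chi_G(p)=c_1p+(\text{multiple of }p^2)$, observe $p\mid a_j$ for $j<n$, and invoke Eisenstein. Your bookkeeping of the root condition via $b_S$ is just a re-packaging of the paper's sum over triples $(S_0,S_1,U)$ with $r\notin S_0$, and the tree case is identical.

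The one genuine difference is the $p=2$ case. The paper proves a standalone lemma, by induction on $|E(G)|$ via deletion--contraction, that the linear coefficient $c_1$ of $\chi_G$ is odd if and only if $G$ is connected and bipartite. You instead observe that for connected bipartite $G$ one has $\chi_G(2)=2$, while $\chi_G(2)=\sum_{i\geq 1}c_i2^i\equiv 2c_1\pmod 4$, so $2\equiv 2c_1\pmod 4$ forces $c_1$ odd. This is shorter and only proves the implication actually needed; the paper's lemma gives the full equivalence, which is of some independent interest but is overkill here. Both are correct; your version is the more economical choice if the lemma is not wanted for its own sake.
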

\begin{proof}
Write $X_{\neq 0}^{2,p}(G_*)=\sum_{j\geq 0} a_jq^j$.
Adapting the argument leading to~\eqref{eq:aj-formula}, we can show
\begin{equation}\label{eq:aj2}
 a_j=\sum_{(S_0,S_1,U)} \chi_{G|U}(p), 
\end{equation}
where we sum over all lists $(S_0,S_1,U)$ satisfying these conditions:
$V(G_*)$ is the disjoint union of $S_0$, $S_1$, and $U$;
$|S_0\cup S_1|=j$; the root of $G_*$ is not in $S_0$;
no two vertices in $S_0$ are joined by an edge;  and
no two vertices in $S_1$ are joined by an edge.
We get a proper coloring contributing to the coefficient of $q^j$
in $X_{\neq 0}^{2,p}(G_*)$ by coloring every vertex in $S_0$ with color $0$,
every vertex in $S_1$ with color $1$, and choosing a proper coloring
of $G|U$ using the $p$ available colors $\{2,3,\ldots,p+1\}$.
Formula~\eqref{eq:aj2} follows from the Sum Rule, the Product Rule,
and the definition of $\chi_{G|U}$.

Since $G_*$ is connected and bipartite, there
is exactly one proper coloring of $G_*$ using color set $\{0,1\}$
where the root $r$ of $G_*$ is not colored $0$. In more detail,
for each vertex $s$ of $G$, there exists a path in $G$ from $r$ to $s$
(since $G$ is connected), and the lengths of all paths from $r$ to $s$
have the same parity (since $G$ is bipartite). To obtain a coloring
$\kappa$ with the specified properties, we must set $\kappa(s)=1$
for all $s$ that are an even number of edges away from $r$, and
$\kappa(s)=0$ for all $s$ that are an odd number of edges away from $r$.
So the coefficient of $q^n$ in $X_{\neq 0}^{2,p}(G_*)$ is $1$,
and hence this polynomial is monic of degree $n=|V(G)|$.
(This conclusion also holds for $X_0^{2,p}(G_*)$, by
interchanging the roles of colors $0$ and $1$.)

For $0\leq j<n$, $p$ divides $a_j$ since 
$p$ divides each summand $\chi_{G|U}(p)$ on the right side of~\eqref{eq:aj2}.
(We need $j<n$ to ensure that $G|U$ is a nonempty graph.)
Since $p$ does not divide the coefficient of $x$ in $\chi_G$,
$a_0=\chi_G(p)$ is not divisible by $p^2$.  Eisenstein's Criterion 
now shows that $X_{\neq 0}^{2,p}(G_*)$ is irreducible in $\Q[q]$.
The conclusion for rooted trees follows since every rooted tree $T_*$ 
is non-empty, connected, and bipartite, and the coefficient of $x$
in $\chi_T=x(x-1)^{n-1}$ is $(-1)^{n-1}$.
When $p=2$, the conclusion for any connected bipartite rooted graph
follows from the next lemma. 
\end{proof}

\begin{lemma}
For any graph $G$, the coefficient of $x$ in $\chi_G$ 
is odd if and only if $G$ is connected and bipartite (has no odd cycles).
\end{lemma}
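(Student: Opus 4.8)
The plan is to extract the parity of the linear coefficient of $\chi_G$ directly from the integer $\chi_G(2)$, whose combinatorial meaning is transparent, rather than analyzing the chromatic polynomial structurally.

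First I would record the reduction to a parity question about $\chi_G(2)$. Since $G$ has $n\geq 1$ vertices, $\chi_G\in\Z[x]$ and $\chi_G(0)=0$, so we may write $\chi_G=c_1x+c_2x^2+\cdots+c_nx^n$ with each $c_i\in\Z$; the claim is that $c_1$ is odd exactly when $G$ is connected and bipartite. Putting $x=2$ gives $\chi_G(2)=2\bigl(c_1+2c_2+\cdots+2^{n-1}c_n\bigr)$, so $\tfrac12\chi_G(2)$ is an integer and $\tfrac12\chi_G(2)\equiv c_1\pmod 2$. Hence it suffices to find the parity of $\tfrac12\chi_G(2)$.

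Next I would evaluate $\chi_G(2)$, the number of proper colorings $\kappa\colon V(G)\to\{1,2\}$, by a short case analysis. If $G$ is connected, then fixing the color of one vertex determines $\kappa$ by propagating the constraint along edges, so $\chi_G(2)=2$ if this propagation is consistent---that is, if $G$ is bipartite---and $\chi_G(2)=0$ otherwise; thus $\tfrac12\chi_G(2)$ is $1$ when $G$ is connected and bipartite and $0$ when $G$ is connected and non-bipartite. If $G$ is disconnected, then $\chi_G$ is the product of the chromatic polynomials of its $\geq 2$ components, each of which is divisible by $x$, so $x^2\mid\chi_G$, forcing $c_1=0$; and a disconnected graph is not connected. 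Assembling these cases shows $c_1$ is odd precisely when $G$ is connected and bipartite. The degenerate cases are immediate: a single vertex has $\chi_G=x$, and an $n$-vertex tree has $\chi_T=x(x-1)^{n-1}$ with linear coefficient $(-1)^{n-1}$, consistent with the claim.

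I do not expect a genuine obstacle here: the only nonformal input is the standard fact that a connected graph admits at most one bipartition, which forces $\chi_G(2)\in\{0,2\}$, and the rest is bookkeeping. If one wanted an argument not passing through $\chi_G(2)$, the alternative is induction on $|E(G)|$ using deletion--contraction: the linear coefficient satisfies $c_1(G)\equiv c_1(G\setminus e)+c_1(G/e)\pmod 2$, and one verifies that the ``connected and bipartite'' indicator obeys the same recursion, the point needing real care being that contracting a non-bridge edge $e=uv$ of a bipartite graph produces a non-bipartite graph (an odd-length $u$--$v$ path in $G\setminus e$ becomes an odd closed walk after contraction). Since this route also requires separate treatment of bridges and of the non-bipartite case, I would present the shorter $\chi_G(2)$ argument.
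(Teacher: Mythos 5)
Your proof is correct, and it takes a genuinely different and shorter route than the paper. The paper proves the lemma by induction on $|E(G)|$ using the deletion--contraction recursion $L(G)=L(G-e)-L(G_e)$, with a four-way case analysis (bridges; connected bipartite; connected non-bipartite with an edge off an odd cycle; an odd cycle itself), and the delicate point there is tracking how bipartiteness behaves under contraction. You instead exploit the evaluation $\chi_G(2)$: since $\chi_G(0)=0$ forces $\chi_G=c_1x+c_2x^2+\cdots$, one gets $\tfrac12\chi_G(2)=c_1+2c_2+4c_3+\cdots\equiv c_1\pmod 2$, and $\chi_G(2)$ counts proper $2$-colorings, which is $2$ for a connected bipartite graph, $0$ for a connected non-bipartite graph, while a disconnected graph has $x^2\mid\chi_G$ and hence $c_1=0$. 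All three cases check out, and the reduction $c_1\equiv\tfrac12\chi_G(2)\pmod 2$ is sound. What your approach buys is brevity and transparency: no induction, no recursion, and the combinatorial input is just the uniqueness (up to swapping the two classes) of a $2$-coloring of a connected bipartite graph. What the paper's approach buys is that it stays entirely within the deletion--contraction framework already used elsewhere in that section, at the cost of a longer case analysis; your closing sketch of that alternative correctly identifies its one delicate step (contracting a non-bridge edge of a bipartite graph destroys bipartiteness). The only cosmetic caveat is the degenerate vertexless graph, which neither you nor the paper needs, since the lemma is applied only to non-empty connected graphs.
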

\begin{proof}
Let $L(G)$ be the coefficient of $x$ in $\chi_G$.
We use induction on the number of edges in $G$.
The result is immediate if $G$ has no edges, or if $G$ is not connected
($L(G)$ is zero in the latter case). For the induction step,
assume $G$ is connected with at least one edge.  It follows 
from the deletion-contraction recursion~\cite[Thm.~5.3.6]{west-graph} 
that for any edge $e$ of $G$, $L(G)=L(G-e)-L(G_e)$,
where $G-e$ is obtained from $G$ by deleting edge $e$,
and $G_e$ is obtained from $G$ by contracting edge $e$.
Now consider several cases.

\emph{Case~1.} 
There is an edge $e$ of $G$ whose removal disconnects $G$,
so $L(G-e)=0$. Now $G_e$ is connected, and $G_e$ is bipartite
if and only if $G$ is bipartite, since contracting $e$ does
not create or destroy any cycles.
So $L(G)$ and $L(G_e)$ have the same parity, and the result
follows by induction.
 
\emph{Case~2.}
$G$ is connected and bipartite, but Case~1 does not apply to $G$.
 Let $e$ be any edge of $G$. Then $G-e$ is connected and bipartite,
 while $G_e$ is connected and not bipartite (as $e$ must be part of
 an even-length cycle of G, and contraction makes this an odd cycle).
 By induction, $L(G-e)$ is odd and $L(G_e)$ is even, so $L(G)$ is odd.
 
\emph{Case~3.}
 $G$ is connected with an odd cycle, and $G$ has an edge $e$
 not used in that odd cycle. We can assume Case~1 does not apply to $G$.
 Then $G-e$ is connected with an odd cycle, and
 (as is readily checked) $G_e$ also has an odd cycle.
 By induction, $L(G-e)$ and $L(G_e)$ are even, so $L(G)$ is even.
 
\emph{Case~4.}
None of the first three cases apply to $G$. Then $G$ itself must be an odd
 cycle. For any edge $e$ on this cycle, $G-e$ is a path and $G_e$ is an even 
 cycle (or a single edge), so $G-e$ and $G_e$ are connected and bipartite. 
 By induction, $L(G-e)$ and $L(G_e)$ are odd, so $L(G)$ is even.
\end{proof}

\subsection{Analogue of Stanley's Conjecture for Rooted Trees}
\label{subsec:prove-thm-rooted}

In this section, we prove that for all $N\geq 3$,
the polynomial $X_0(T_*;x_0,\ldots,x_N)$ distinguishes isomorphism
classes of rooted trees. In fact, a much more striking result is true:
the one-variable polynomial $X_0^{2,2}(T_*)=X_0(T_*;q,q,1,1)$ already 
suffices to distinguish rooted trees. 
To prove this, we need a variation of Proposition~\ref{prop:x0-det}.
First we set up notation. For $i\in\{0,1,2,3\}$ and a rooted graph $G_*$,
let $\X_i(G_*)=X_i(G_*;q,q,1,1)$,
let $\X_{\neq i}(G_*)=X_{\neq i}(G_*;q,q,1,1)$, and let $\X_G=X_G(q,q,1,1)$.
Given a polynomial $f=a_nq^n+a_{n-1}q^{n-1}+\cdots+a_1q+a_0\in\Q[q]$ 
of degree at most $n$, let $\rev_n(f)=q^n f(1/q)=a_0q^n+a_1q^{n-1}
 +\cdots+a_{n-1}q+a_n$ be the polynomial obtained by reversing the 
coefficient sequence $(a_0,a_1,\ldots,a_n)$.

\begin{prop}\label{prop:x0-det2}
For any rooted graph $G_*$ with $n$ vertices:
\\ (a) $\X_0(G_*)=\X_1(G_*)$ and $\X_2(G_*)=\X_3(G_*)=\rev_n(\X_0(G_*))$.
\\ (b) $\X_{\neq 0}(G_*)=\X_{\neq 1}(G_*)$ and
       $\X_{\neq 2}(G_*)=\X_{\neq 3}(G_*)=\rev_n(\X_{\neq 0}(G_*))$.
\\ (c) $\X_G=2\X_0(G_*)+2\rev_n(\X_0(G_*))$.
\\ (d) $\X_G=(2/3)\X_{\neq 0}(G_*)+(2/3)\rev_n(\X_{\neq 0}(G_*))$.
\\ (e) $\X_0(G_*)=\X_G-\X_{\neq 0}(G_*)
     =(-1/3)\X_{\neq 0}(G_*)+(2/3)\rev_n(\X_{\neq 0}(G_*))$.
\end{prop}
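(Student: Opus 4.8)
The plan is to obtain parts~(a)--(e) from the linear identities in Proposition~\ref{prop:x0-det} (and the discussion preceding it), applied with $N=3$ so that the color set is $\{0,1,2,3\}$, together with two structural facts: both $X_0(G_*)$ and $X_{\neq 0}(G_*)$ are homogeneous of degree $n=|V(G_*)|$ (every monomial $\wt(\kappa)=\prod_{v}x_{\kappa(v)}$ has degree $n$), and both are symmetric in $x_1,x_2,x_3$. The one genuinely new ingredient is a \emph{monomial-reversal} observation, which I would isolate first: if $P\in\Q[x_0,x_1,x_2,x_3]$ is homogeneous of degree $n$ and symmetric in $x_1,x_2,x_3$, then $P(1,1,q,q)=\rev_n\bigl(P(q,q,1,1)\bigr)$. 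Indeed, a monomial $x_0^{e_0}x_1^{e_1}x_2^{e_2}x_3^{e_3}$ with $e_0+e_1+e_2+e_3=n$ contributes $q^{e_0+e_1}$ to $P(q,q,1,1)$ and $q^{e_2+e_3}=q^{n-(e_0+e_1)}$ to $P(1,1,q,q)$, so the two specializations have reversed coefficient sequences.

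For part~(a): $\X_0(G_*)=\X_1(G_*)$ is immediate from $X_1(G_*)=(0,1)\bullet X_0(G_*)$ (equation~\eqref{eq:reln2}), since the specialization $x_0=x_1=q$, $x_2=x_3=1$ is fixed by the variable transposition $(0,1)$. Next, $X_2(G_*)=(0,2)\bullet X_0(G_*)$ gives $\X_2(G_*)=X_0(G_*;1,q,q,1)$; symmetry of $X_0(G_*)$ in $x_1,x_2,x_3$ rewrites this as $X_0(G_*;1,1,q,q)$, and the monomial-reversal observation applied to $P=X_0(G_*)$ yields $\X_2(G_*)=\rev_n(\X_0(G_*))$. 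The same computation with $(0,3)$ in place of $(0,2)$ gives $\X_3(G_*)=\rev_n(\X_0(G_*))$. Part~(b) is the identical argument with $X_{\neq 0}$ in place of $X_0$, using that $X_{\neq 0}(G_*)$ is also homogeneous of degree $n$ and symmetric in $x_1,x_2,x_3$.

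Parts~(c)--(e) are then pure bookkeeping with the earlier identities, specialized at $(q,q,1,1)$. Specializing $X_G=\sum_{i=0}^{3}X_i(G_*)$ and substituting part~(a) gives $\X_G=2\X_0(G_*)+2\rev_n(\X_0(G_*))$, which is~(c). Specializing the relation $\sum_{i=0}^{3}X_{\neq i}(G_*)=3X_G$ (the key observation proved before Proposition~\ref{prop:x0-det}) and substituting part~(b) gives $3\X_G=2\X_{\neq 0}(G_*)+2\rev_n(\X_{\neq 0}(G_*))$, which is~(d). Specializing $X_G=X_0(G_*)+X_{\neq 0}(G_*)$ (equation~\eqref{eq:reln} with $k=0$) gives the first equality of~(e), and combining it with~(d) gives the second. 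I do not expect a real obstacle here; the only place demanding care is the interaction in part~(a) between the symmetry in $x_1,x_2,x_3$, the color transpositions $(0,k)$, and the reversal operator, which is exactly why I would prove the monomial-reversal observation separately at the start.
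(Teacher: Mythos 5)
Your proposal is correct and is essentially the paper's argument in algebraic packaging: the paper realizes the same symmetries directly on colorings (recoloring by $(0,1)$, by $(2,3)$, and by $(0,2)(1,3)$, the last sending a coloring contributing to the coefficient of $q^j$ to one contributing to the coefficient of $q^{n-j}$), which is exactly what your identities $X_k(G_*)=(0,k)\bullet X_0(G_*)$ combined with symmetry in $x_1,x_2,x_3$ and your monomial-reversal observation for homogeneous polynomials accomplish. Parts (c)--(e) are obtained in the paper by the same bookkeeping you describe, using $\sum_{i=0}^3 X_i(G_*)=X_G$, $\sum_{i=0}^3 X_{\neq i}(G_*)=3X_G$, and $X_G=X_0(G_*)+X_{\neq 0}(G_*)$.
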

\begin{proof}
For a permutation $\sigma$ of $\{0,1,2,3\}$
and a coloring $\kappa:V(G_*)\rightarrow \{0,1,2,3\}$,
$\sigma\circ\kappa$ is the coloring obtained from $\kappa$
by replacing each color $c$ by color $\sigma(c)$.
Note $\kappa$ is a proper coloring of $G$ if and only if
 $\sigma\circ\kappa$ is a proper coloring of $G$.
A coloring $\kappa$ contributes to the coefficient of $q^j$ 
in $\X_0(G_*)$ if and only if $(0,1)\circ \kappa$ contributes to 
the coefficient of $q^j$ in $\X_1(G_*)$. So $\X_0(G_*)=\X_1(G_*)$, and
$\X_2(G_*)=\X_3(G_*)$ follows similarly by comparing
$\kappa$ to $(2,3)\circ\kappa$.  On the other hand, a coloring $\kappa$
contributes to the coefficient of $q^j$ in $\X_0(G_*)$ iff
$\kappa'=(0,2)(1,3)\circ\kappa$ contributes to the coefficient of 
$q^{n-j}$ in $\X_2(G_*)$. So $\X_2(G_*)=\rev_n(\X_0(G_*))$. 
This proves (a), and we prove (b) by the same symmetry arguments.
For example, if $\kappa$ colors exactly $j$ vertices using $\{0,1\}$
and does not color the root with color $0$, then $(0,2)(1,3)\circ\kappa$
colors exactly $n-j$ vertices using $\{0,1\}$ and does not color the
root with color $2$ (and conversely).
Since $\X_G=\X_0(G_*)+\X_1(G_*)+\X_2(G_*)+\X_3(G_*)$,
part~(c) follows from part~(a). Similarly,
part~(d) follows from part~(b) and the observation
$\sum_{i=0}^3 \X_{\neq i}(G_*)=\sum_{j=0}^3 3\X_j(G_*)= 3\X_G$. 
The first equality in part~(e) holds by definition, and the second equality
follows from part~(d).
\end{proof}

\begin{theorem}\label{thm:best}
For all rooted trees $T_*$ and $U_*$ and all $N\geq 3$,
the following conditions are equivalent:
(a)~$T_*\cong U_*$; 
(b)~$X_0(T_*;x_0,\ldots,x_N)=X_0(U_*;x_0,\ldots,x_N)$;
(c)~$\X_0(T_*)=\X_0(U_*)$.
\end{theorem}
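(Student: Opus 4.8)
The plan is to prove the cycle of implications $(a)\Rightarrow(b)\Rightarrow(c)\Rightarrow(a)$. The implication $(a)\Rightarrow(b)$ is immediate: a rooted graph isomorphism $\phi\colon T_*\to U_*$ induces a weight-preserving bijection $\Col_0(T_*)\to\Col_0(U_*)$ via $\kappa\mapsto\kappa\circ\phi^{-1}$, so $X_0(T_*;x_0,\dots,x_N)=X_0(U_*;x_0,\dots,x_N)$ for every $N$. For $(b)\Rightarrow(c)$ I would use $N\geq 3$: substituting $x_0=x_1=q$, $x_2=x_3=1$, and $x_4=\dots=x_N=0$ into the polynomial identity in $(b)$ produces, via the stability property of $X_0$, the identity $\X_0(T_*)=\X_0(U_*)$ in $\Q[q]$.

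The real content is $(c)\Rightarrow(a)$, which I would prove by induction on $n=|V(T_*)|$. Two facts get used repeatedly. First, by Proposition~\ref{prop:recT} specialized at $x_0=x_1=q$, $x_2=x_3=1$, the polynomial $\X_0(T_*)$ equals $q$ times a product of the polynomials $\X_{\neq 0}(T_{j*})$, one per principal subtree; by Theorem~\ref{thm:better} applied with the prime $p=2$ (every rooted tree is connected and bipartite, and the coefficient of $x$ in its chromatic polynomial is $\pm1$), each $\X_{\neq 0}(T_{j*})=X_{\neq 0}^{2,2}(T_{j*})$ is \emph{monic of degree $|V(T_{j*})|$} and irreducible in $\Q[q]$, so $\X_0(T_*)$ is monic of degree $n$. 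In particular, $\X_0(T_*)=\X_0(U_*)$ already forces $|V(U_*)|=n$. Second, $\X_0(T_*)$ and $\X_{\neq 0}(T_*)$ determine each other: Proposition~\ref{prop:x0-det2}(c) gives $\X_{\neq 0}(T_*)=\X_T-\X_0(T_*)=\X_0(T_*)+2\rev_n(\X_0(T_*))$, while Proposition~\ref{prop:x0-det2}(e) expresses $\X_0(T_*)$ back in terms of $\X_{\neq 0}(T_*)$. The base case $n=1$ is trivial, since then $T_*$ and $U_*$ are both the one-vertex rooted tree.

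For the inductive step, let $n>1$ and let $T_{1*},\dots,T_{c*}$ and $U_{1*},\dots,U_{d*}$ be the principal subtrees of $T_*$ and $U_*$. Proposition~\ref{prop:recT}, specialized as above, gives
\[ q\prod_{j=1}^{c}\X_{\neq 0}(T_{j*}) \;=\; \X_0(T_*) \;=\; \X_0(U_*) \;=\; q\prod_{j=1}^{d}\X_{\neq 0}(U_{j*}), \]
so after cancelling $q$ we have an equality in $\Q[q]$ of two products whose factors are all monic and irreducible (each principal subtree is again a rooted tree, so Theorem~\ref{thm:better} applies). Since $\Q[q]$ is a UFD and monic associates are equal, the multisets $\{\X_{\neq 0}(T_{j*})\}_{j=1}^{c}$ and $\{\X_{\neq 0}(U_{j*})\}_{j=1}^{d}$ coincide; hence $c=d$ and, after reordering, $\X_{\neq 0}(T_{j*})=\X_{\neq 0}(U_{j*})$ for all $j$. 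Each such equality forces $|V(T_{j*})|=|V(U_{j*})|$ (matching degrees of monic polynomials), and then Proposition~\ref{prop:x0-det2}(e) yields $\X_0(T_{j*})=\X_0(U_{j*})$. Since $|V(T_{j*})|<n$, the inductive hypothesis gives $T_{j*}\cong U_{j*}$, and the isomorphism criterion~\eqref{eq:iso-crit} then gives $T_*\cong U_*$.

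I expect the one genuinely delicate point to have been pushed already into Theorem~\ref{thm:better}: what makes the factorization argument run cleanly is not merely that $X_{\neq 0}^{2,2}(T_*)$ is irreducible, but that it is \emph{monic}, which removes all unit ambiguity and lets the UFD property deliver an honest equality of multisets of polynomials. The rest is bookkeeping --- verifying the two-way conversion between $\X_0$ and $\X_{\neq 0}$ in Proposition~\ref{prop:x0-det2} so that the recursion (naturally phrased via $\X_{\neq 0}$ of the subtrees) meshes with an inductive hypothesis phrased via $\X_0$, and tracking degrees carefully so that vertex counts are recovered at each step and the induction is well-founded.
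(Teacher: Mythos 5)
Your proof is correct and follows essentially the same path as the paper: the recursion from Proposition~\ref{prop:recT} specialized at $(q,q,1,1)$, Theorem~\ref{thm:better} to get monic irreducibility of the $\X_{\neq 0}(T_{j*})$ factors, unique factorization in $\Q[q]$ to match factors, Proposition~\ref{prop:x0-det2}(e) to pass from $\X_{\neq 0}$ back to $\X_0$, and then induction plus criterion~\eqref{eq:iso-crit}. Your explicit emphasis that the factors are monic (so the UFD argument needs no fuss about units) is exactly the point the paper relies on as well.
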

\begin{proof}
It is routine to check that isomorphic rooted graphs have the same $X_0$ 
polynomial, so (a) implies (b).  Since $N\geq 3$, (b) implies (c) by setting
$x_0=x_1=q$, $x_2=x_3=1$, and $x_k=0$ for $3<k\leq N$.
To prove (c) implies (a), we use induction on $n=\deg(\X_0(T_*))$.
Let $T_*$ and $U_*$ be rooted trees with $\X_0(T_*)=\X_0(U_*)$.
Both $T_*$ and $U_*$ have $n=\deg(\X_0(T_*))$ 
vertices. In the base case $n=1$, we have 
$T_*\cong U_*$ since both trees consist of a single root vertex. 
Assume $n>1$ from now on.
Let the root $r$ of $T_*$ have principal subtrees $T_{1*},\ldots,T_{c*}$, 
and let the root $s$ of $U_*$ have principal subtrees
$U_{1*},\ldots,U_{d*}$. Applying Proposition~\ref{prop:recT}
to $T_*$ and to $U_*$ and specializing at $(q,q,1,1)$, 
the assumption $\X_0(T_*)=\X_0(U_*)$ becomes
\begin{equation}\label{eq:recTU}
 q\prod_{j=1}^c \X_{\neq 0}(T_{j*})=q\prod_{j=1}^d \X_{\neq 0}(U_{j*})
\qquad\mbox{ in $\Q[q]$.}
\end{equation}
Theorem~\ref{thm:better}
shows that each side of~\eqref{eq:recTU} is a factorization
of the polynomial $\X_0(T_*)=\X_0(U_*)$ into monic irreducible factors.
Because $\Q[q]$ is a unique factorization domain, 
we conclude that $c=d$ and (after reordering factors appropriately)
$\X_{\neq 0}(T_{j*})=\X_{\neq 0}(U_{j*})$ for $1\leq j\leq c$. 
For each $j$, we can apply Proposition~\ref{prop:x0-det2}(e),
taking $n$ there to be $\deg(\X_{\neq 0}(T_{j*}))=|V(T_{j*})|=|V(U_{j*})|$,
to deduce that $\X_0(T_{j*})=\X_0(U_{j*})$.
Each principal subtree $T_{j*}$ has fewer than $n$ vertices. 
By the induction hypothesis, $T_{j*}\cong U_{j*}$ for $1\leq j\leq c$. 
Finally, $T_*\cong U_*$ follows from~\eqref{eq:iso-crit}.
\end{proof}

Based on the previous result, one might hope that $f_G(q)=X_G(q,q,1,1)$
suffices to distinguish unrooted trees. This is not true, as seen
in the next example.

\begin{example}
Trees $T$ and $U$ shown below are not isomorphic, but
$f_T=f_U= 2q^{11} + 104q^{10} + 1700q^9 + 11452q^8 + 37804q^7 + 67036q^6 +
67036q^5 + 37804q^4 + 11452q^3 + 1700q^2 + 104q + 2$. 
\[ \includegraphics[width=6in]{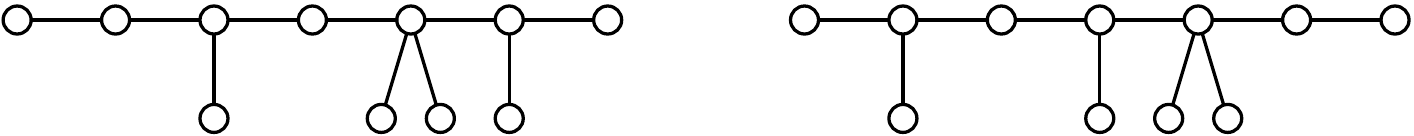} \]
\end{example}

%%%%%%%%%%%%%%%%%%%%%%%%%%%%%%%%%%%%%%%%%%%%%%%%%%%%%%%%%%%%%%%%%%%%%%%%%%%%%%
\section{Further Properties of Chromatic Polynomials for Rooted Graphs}
\label{sec:additional}

In this section and the next, certain standard facts regarding various
bases of the ring of symmetric functions $\Lambda$ are
needed. See~\cite{loehr-comb} for background including definitions of
the monomial basis $\{m_{\lambda}\}$, the elementary basis
$\{e_{\lambda}\}$, and the power-sum basis $\{p_{\lambda}\}$.

\subsection{Deletion-Contraction Recursion for $X_0(G_*)$}
\label{sec:delete-contract}

The classical one-variable chromatic polynomials satisfy
the \emph{deletion-contraction recursion} $\chi_G=\chi_{G-e}-\chi_{G_e}$,
where $G$ is a graph, $e$ is an edge of $G$, $G-e$ is the graph $G$
with edge $e$ deleted, and $G_e$ is the graph obtained from $G$
by contracting the edge $e$. There is no such recursion for Stanley's
chromatic symmetric function $X_G$, although 
deletion-contraction recursions are known for the pointed 
chromatic symmetric functions~\cite[Lemma 3.5]{pawlowski}, the
$W$-polynomials and $U$-polynomials~\cite[pg. 1059]{noble-welsh}, and
the noncommutative chromatic functions
$Y_G$~\cite[Prop. 3.5]{gebhard-sagan}. Related recursions can be found
in~\cite{orellana-scott}. We now show
that the rooted version $X_0(G_*)$ does satisfy a simple
generalization of the recursion for $\chi_G$.

\begin{theorem}\label{thm:dc}
Suppose $G_*$ is a rooted graph with root vertex $r$, 
and $e$ is an edge from $r$ to $s$.
Let $G_*-e$ be $G_*$ with edge $e$ deleted (using the same root $r$).
Let $G_{e*}$ be $G_*$ with edge $e$ contracted, meaning that we identify
vertices $r$ and $s$ in $G_e$ and use the identified vertex as the new root.
Then 
\begin{equation}\label{eq:X0-rec}
 X_0(G_*)=X_0(G_*-e)-x_0X_0(G_{e*}). 
\end{equation}
An initial condition occurs when no edge of $G$ touches the root $r$.
In that case, 
\begin{equation}\label{eq:X0-init}
X_0(G_*)=x_0X(G-r;x_0,\ldots,x_N), 
\end{equation}
where $G-r$ is the 
unrooted graph obtained from $G$ by deleting the isolated root vertex.
\end{theorem}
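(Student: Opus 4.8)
The plan is to mirror the classical deletion--contraction argument for $\chi_G$, restricting attention throughout to colorings that assign color $0$ to the root. First I would fix the edge $e$ joining the root $r$ to the vertex $s$ and partition the set $\Col_0(G_*-e)$ of proper colorings of $G-e$ with $\kappa(r)=0$ according to the color that $\kappa$ assigns to $s$. Those $\kappa$ with $\kappa(s)\neq 0$ are exactly the proper colorings of $G$ itself with $\kappa(r)=0$, i.e.\ the elements of $\Col_0(G_*)$, since $e$ is the only edge distinguishing $G$ from $G-e$ and it has endpoints of different colors precisely when $\kappa(s)\neq 0=\kappa(r)$. Those $\kappa$ with $\kappa(s)=0$ are the colorings where $r$ and $s$ receive the same color, and I claim these are in weight-respecting bijection (up to a factor of $x_0$) with $\Col_0(G_{e*})$.

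For that bijection I would send a coloring $\kappa$ of $G-e$ with $\kappa(r)=\kappa(s)=0$ to the coloring $\overline{\kappa}$ of $G_e$ that assigns color $0$ to the identified vertex and agrees with $\kappa$ on every other vertex. The point requiring care is that $\overline{\kappa}$ is proper: an edge of $G_e$ incident to the merged vertex arises from an edge of $G$ incident to $r$ or to $s$ (possibly after collapsing parallel edges created when $r$ and $s$ share a neighbor, which is harmless because $G_e$ is a simple graph), and $\kappa$ already forbade both $r$ and $s$ from sharing a color with any such neighbor; the remaining edges of $G_e$ are edges of $G-e$, on which $\kappa$ is proper by hypothesis. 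The inverse pulls a proper coloring of $G_e$ whose root has color $0$ back to the coloring of $G-e$ giving $r$ and $s$ color $0$; properness on $G-e$ is immediate since every edge of $G-e$ maps to an edge of $G_e$. Comparing weights, $\wt(\kappa)=x_{\kappa(r)}x_{\kappa(s)}\prod_{v\neq r,s}x_{\kappa(v)}=x_0^2\prod_{v\neq r,s}x_{\kappa(v)}$ while $\wt(\overline{\kappa})=x_0\prod_{v\neq r,s}x_{\kappa(v)}$, so $\wt(\kappa)=x_0\,\wt(\overline{\kappa})$. Summing over the two blocks of the partition and invoking the Sum Rule and Product Rule for weighted sets then gives $X_0(G_*-e)=X_0(G_*)+x_0X_0(G_{e*})$, which rearranges to~\eqref{eq:X0-rec}.

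For the initial condition~\eqref{eq:X0-init}, when no edge of $G$ is incident to $r$ the root is isolated, so $\kappa\in\Col_0(G_*)$ exactly when $\kappa(r)=0$ and the restriction of $\kappa$ to $V(G)\setminus\{r\}$ is an arbitrary proper coloring of $G-r$ using colors in $\{0,1,\ldots,N\}$, with no interaction between the two pieces. Since the weight factors as $x_0$ times the weight of the restricted coloring, the Product Rule for weighted sets yields $X_0(G_*)=x_0X(G-r;x_0,\ldots,x_N)$, and $X(G-r)$ is Stanley's chromatic symmetric polynomial for $G-r$ in all $N+1$ variables. I expect the only genuinely delicate step to be the verification that the contraction map preserves properness when $r$ and $s$ have common neighbors and parallel edges collapse in the simple graph $G_e$; the rest is routine weight bookkeeping.
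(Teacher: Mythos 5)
Your proof is correct and follows essentially the same route as the paper: partition $\Col_0(G_*-e)$ by whether $s$ also receives color $0$, identify one block with $\Col_0(G_*)$ and the other with $\Col_0(G_{e*})$ via the same edge-contraction bijection (with the $x_0$ weight factor), and obtain the initial condition from the product rule. The extra care you take verifying properness when $r$ and $s$ share neighbors is sound and simply spells out a detail the paper leaves implicit.
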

\begin{proof}
Let $S=\Col_0(G_*)$, $T=\Col_0(G_*-e)$, and $U=\Col_0(G_{e*})$.
The set $T$ is the disjoint union of the two subsets
\[ S=\{\kappa\in T: 0=\kappa(r)\neq\kappa(s)\}\quad\mbox{ and }\quad
  U'=\{\kappa\in T: 0=\kappa(r)=\kappa(s)\}. \]
There is a bijection from $U'$ to $U$ sending $\kappa$ to $\overline{\kappa}$,
where the two colorings agree on all vertices other than $r$ and $s$,
and $\overline{\kappa}$ colors the new root vertex $0$.
It follows at once that $\wt(\kappa)=x_0\wt(\overline{\kappa})$.
Since $T$ is the disjoint union of $S$ and $U'$, the Weighted Sum Rule gives
$X_0(G_*-e)=X_0(G_*)+x_0X_0(G_{e*})$, as needed.
The initial condition is immediate from the Weighted Product Rule.
\end{proof}

\begin{example}\label{ex:path3b}
Continuing Example~\ref{ex:path3a}, we use Theorem~\ref{thm:dc} to compute
(for $N=2$)
\begin{align*}
  X_0(\raisebox{-0.1\height}{\includegraphics[width=1cm]{ex-A}}) &=
  X_0(\raisebox{-0.1\height}{\includegraphics[width=1cm]{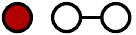}})-x_0X_0(\raisebox{-0.1\height}{\includegraphics[width=0.6cm]{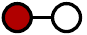}}) 
= x_0(2x_0x_1 + 2x_0x_2 + 2x_1x_2) - x_0(x_0x_1 + x_0x_2)
\\ &= 2x_0x_1x_2 + x_0^2x_1 + x_0^2x_2,
\end{align*}
in agreement with~\eqref{eq:exA}.
\end{example}

Recursions analogous to~\eqref{eq:X0-rec} hold for specializations of 
$X_0(G_*)$.  For example, the polynomials $\X_0(G_*)=X_0(G_*;q,q,1,1)$ 
satisfy $\X_0(G_*)=\X_0(G_*-e)-q\X_0(G_{e*})$
with initial condition $\X_0(G_*)=q\X_{G-r}$.
If we instead use the \emph{principal specialization}
$x_0\rightarrow 1$, $x_1\rightarrow q$, $\ldots$, $x_N\rightarrow q^N$,
then the $x_0$ multiplying the subtracted term in~\eqref{eq:X0-rec} becomes $1$. 
\begin{conjecture}
For all trees $T$ and $U$ with at most $N+1$ vertices,
\begin{equation*}
  T\cong U \text{ if and only if } 
  X_T(1,q,q^2,\ldots,q^N)=X_U(1,q,q^2,\ldots,q^N).
\end{equation*}
\end{conjecture}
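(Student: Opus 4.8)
The plan is to mimic the proof of Theorem~\ref{thm:best}, trading the two-color-pair specialization $(q,q,1,1)$ for the principal specialization $x_i\mapsto q^i$ (so in particular $x_0\mapsto 1$) and trading the rooted tree recursion for the deletion--contraction recursion of Theorem~\ref{thm:dc}. Write $Y_G(q)=X_G(1,q,q^2,\ldots,q^N)$, and for a rooted graph $G_*$ let $\widehat{X}_0(G_*),\widehat{X}_{\neq 0}(G_*)\in\Q[q]$ denote the principal specializations of $X_0(G_*)$ and $X_{\neq 0}(G_*)$. As noted after Theorem~\ref{thm:dc}, the principal specialization kills the $x_0$ multiplying the subtracted term in~\eqref{eq:X0-rec}, so $\widehat{X}_0(G_*)=\widehat{X}_0(G_*-e)-\widehat{X}_0(G_{e*})$ with initial condition $\widehat{X}_0(G_*)=Y_{G-r}$, while Proposition~\ref{prop:recT} becomes $\widehat{X}_0(T_*)=\prod_{j=1}^c\widehat{X}_{\neq 0}(T_{j*})$.

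First I would try to establish a principal-specialization analogue of Theorem~\ref{thm:better}: for a rooted tree $T_*$, a canonical normalization of $\widehat{X}_{\neq 0}(T_*)$ --- obtained by dividing out the unavoidable power of $q$ forced by the constraint that the root cannot use color $0$, and perhaps reversing coefficients --- should be a polynomial in $\Q[q]$ whose ``interesting part'' is irreducible. The route is again Eisenstein's Criterion: write the specialization as $\sum_j a_jq^j$, sort proper colorings by which vertices receive colors $0$ or $1$, and get a formula for $a_j$ in the style of~\eqref{eq:aj2} in which all but one coefficient is a sum of values $\chi_{T|U}(p)$ with $T|U$ nonempty, hence divisible by a judiciously chosen prime $p$. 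Granting such a factorization result, I would then prove, by induction on $|V(T_*)|$ and unique factorization in $\Q[q]$ exactly as in the proof of Theorem~\ref{thm:best} (using a principal form of Proposition~\ref{prop:x0-det2}(e) to recover $\widehat{X}_0$ of each principal subtree from $\widehat{X}_{\neq 0}$), that $\widehat{X}_0(G_*)$ already distinguishes rooted trees. Finally, I would reduce the unrooted statement to this by showing that $Y_T$ determines the multiset $\bigl\{\widehat{X}_0(T_*^v):v\in V(T)\bigr\}$; then $Y_T=Y_U$ forces these multisets to agree, the previous step identifies them with multisets of rooted trees, and forgetting the root of any one member yields $T\cong U$.

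The hard part --- and it really is hard --- is the last step: from the unrooted invariant $Y_T$ one must extract, vertex by vertex, the generating function for proper colorings that pin that vertex to color $0$. By~\eqref{eq:reln}--\eqref{eq:reln2} one has $X_T=\sum_{k=0}^N (0,k)\bullet X_0(T_*^v)$ for every $v$, and after principal specialization the $k$th term is $X_0(T_*^v)$ evaluated with its root variable reset to $q^k$, so one is asking whether an inclusion--exclusion over colors and vertices can be inverted using only the explicit geometric form $p_j\mapsto 1+q^j+\cdots+q^{Nj}$ of the specialized power sums. I do not expect a routine refinement of Section~\ref{sec:proof} to settle this: the stated biconditional is \emph{strictly stronger} than Stanley's Conjecture~\ref{conj:stan} --- its backward implication is immediate, and its forward implication specializes to~\ref{conj:stan} --- so a complete proof of the last step would in particular prove that conjecture. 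A secondary obstacle worth flagging is that, unlike in Theorem~\ref{thm:better}, the factors $\widehat{X}_{\neq 0}(T_{j*})$ need not be irreducible over $\Q$ (a one-vertex subtree contributes $q(1+q+\cdots+q^{N-1})=q\,\frac{q^N-1}{q-1}$, reducible whenever $N$ is composite), so the first step must keep track of cyclotomic pieces rather than claim outright irreducibility --- a manageable but more delicate bookkeeping than in Lemma~\ref{lem:use-eisen1}.
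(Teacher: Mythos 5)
There is no paper proof to compare against: the statement you were asked to prove is stated in the paper as a \emph{conjecture}, supported only by computer verification for all trees with at most 17 vertices. So the honest benchmark is whether your sketch closes the gap, and it does not --- as you yourself partially acknowledge. Two of your steps are not merely ``delicate bookkeeping'' but are, in present knowledge, open. First, your rooted intermediate step (that the principal specialization $\widehat{X}_0$ distinguishes rooted trees) loses the engine that drives Theorem~\ref{thm:best}: there the factors $\X_{\neq 0}(T_{j*})$ in the recursion of Proposition~\ref{prop:recT} are \emph{monic irreducible} by Theorem~\ref{thm:better}, so unique factorization in $\Q[q]$ identifies the multiset of factors, hence the multiset of principal subtrees. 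Under the principal specialization the factors are generally reducible (your own example $q(1+q+\cdots+q^{N-1})$), and once the factors are reducible, equality of the two products in the analogue of~\eqref{eq:recTU} no longer forces equality of the multisets $\{\widehat{X}_{\neq 0}(T_{j*})\}$ and $\{\widehat{X}_{\neq 0}(U_{j*})\}$; ruling out a nontrivial regrouping of cyclotomic and non-cyclotomic pieces across different subtrees is precisely what would have to be proved, and no Eisenstein-type argument is available since the polynomials in play are visibly not Eisenstein at any prime.

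Second, and more fundamentally, your last step --- recovering the multiset $\{\widehat{X}_0(T_*^v):v\in V(T)\}$ from the unrooted invariant $Y_T$ --- is not a technical inversion problem but is essentially the open content of the whole conjecture. Proposition~\ref{prop:recover} shows that the unspecialized version of exactly this recovery is \emph{equivalent} to Stanley's Conjecture~\ref{conj:stan}, and the example following it shows the recovery genuinely fails for non-tree graphs, so no argument that ignores treeness (such as a formal inclusion--exclusion over colors via~\eqref{eq:reln}--\eqref{eq:reln2}) can succeed. The only tool the paper offers in this direction, Proposition~\ref{prop:pointing}, recovers the \emph{sum} $\sum_v X_0(T_*^v)$ from $X_T$, not the multiset. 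You correctly observe that the forward implication of the statement is strictly stronger than Conjecture~\ref{conj:stan} (for trees of the relevant size), so your plan, even if both intermediate steps were repaired, would amount to reproving Stanley's conjecture along the way; this is why the authors leave the statement as a conjecture, checked computationally through 17 vertices, rather than deriving it from Theorem~\ref{thm:best} or Theorem~\ref{thm:dc}.
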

We have confirmed this conjecture by computer calculations
for all trees with at most 17 vertices.

%%%%%%%%%%%%%%%%%%%
\subsection{Interpretation of Coefficients of $x_0^k$ in $X_0(G_*)$}
\label{sec:interpret-coeffs}

Let $G_*$ be a rooted graph. Writing $z=x_0$, we have
\[ X_0(G_*;z,x_1,\ldots,x_N)=\sum_{k\geq 1} c_k(x_1,\ldots,x_N)z^k, \]
where each $c_k(x_1,\ldots,x_N)$ is a symmetric polynomial in $\Lambda_N$.
We now give combinatorial interpretations for these coefficients.
Recall that an \emph{independent set} 
in a graph $G$ is a subset $A$ of $V(G)$ such that no two
vertices in $A$ are joined by an edge in $G$.  Given such an $A$, let
$G-A$ be the graph with vertex set $V(G)-A$ and edge set obtained from
$E(G)$ by deleting all edges with a vertex in $A$ as one endpoint.

\begin{prop}\label{prop:coeff-zk}
Let $G_*$ be a rooted graph.
\\ (a) For $k>0$, let $I(G_*,k)$ be the set of $k$-element 
independent subsets of $G$ that contain the root. Then
\[ X_0(G_*)|_{x_0^k}=\sum_{A\in I(G_*,k)} X_{G-A}(x_1,\ldots,x_N). \]
(b) For $k\geq 0$, let $I'(G_*,k)$ be the set of $k$-element 
independent subsets of $G$ that do not contain the root. Then
\[ X_{\neq 0}(G_*)|_{x_0^k}=\sum_{A\in I'(G_*,k)} X_{G-A}(x_1,\ldots,x_N). \]
\end{prop}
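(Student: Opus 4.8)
The plan is to prove both parts by the same bijective argument, splitting the proper colorings counted by $X_0(G_*)$ or $X_{\neq 0}(G_*)$ according to the set of vertices that receive color $0$. First I would fix $k$ and describe the set of colorings $\kappa\in\Col_0(G_*)$ (respectively $\Col_{\neq 0}(G_*)$) in which exactly $k$ vertices are colored $0$; each such coloring contributes $x_0^k$ times a monomial in $x_1,\ldots,x_N$ to the relevant polynomial, so the coefficient of $x_0^k$ is the weighted sum (with weights now taken only over the variables $x_1,\ldots,x_N$) of those colorings. The key observation is that the set $A=\kappa^{-1}(0)$ of vertices colored $0$ must be an independent set in $G$, since $\kappa$ is a proper coloring; moreover $A$ contains the root in the $X_0$ case (because the root is colored $0$) and avoids the root in the $X_{\neq 0}$ case.

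Next I would set up the bijection. Given a $k$-element independent set $A$ (in $I(G_*,k)$ for part (a), in $I'(G_*,k)$ for part (b)), a coloring $\kappa$ with $\kappa^{-1}(0)=A$ is determined by its restriction to $V(G)-A$, which must be a proper coloring of the induced graph $G-A$ using only the colors $\{1,2,\ldots,N\}$ — it is automatically proper on $G-A$ since deleting the edges touching $A$ cannot create any new adjacency conflicts, and conversely any proper coloring of $G-A$ with colors in $\{1,\ldots,N\}$ extends (uniquely) to a proper coloring of $G$ by assigning color $0$ to every vertex of $A$. Under this correspondence the $x_1,\ldots,x_N$-part of $\wt(\kappa)$ equals $\wt$ of the coloring of $G-A$, which by definition is exactly $X_{G-A}(x_1,\ldots,x_N)$ when summed over all proper colorings of $G-A$. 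Summing over all valid $A$ and invoking the Sum Rule for Weighted Sets gives the claimed formulas.

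I would organize the write-up as: (1) reduce the coefficient of $x_0^k$ to a weighted sum of colorings with $|\kappa^{-1}(0)|=k$; (2) observe $\kappa^{-1}(0)$ ranges over the appropriate family of $k$-element independent sets, partitioning the colorings; (3) for fixed $A$, establish the weight-preserving bijection between $\{\kappa: \kappa^{-1}(0)=A\}$ and $\Col(G-A)$ with color set $\{1,\ldots,N\}$; (4) combine. The same argument handles both parts, with the only difference being whether the root is required to lie in $A$ — in part (a) the root is forced into $A$, in part (b) it is forced out, matching the definitions of $I(G_*,k)$ and $I'(G_*,k)$. I do not expect a serious obstacle here; the one point requiring a line of care is the claim that a proper coloring of $G$ restricts to a proper coloring of $G-A$ and that the extension back is proper, which follows immediately because the deleted edges are exactly those incident to the monochromatic set $A$, so no conflict among the remaining vertices is affected.
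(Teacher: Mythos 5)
Your proposal is correct and matches the paper's own argument: both decompose the colorings according to the independent set $A=\kappa^{-1}(0)$ (containing or avoiding the root as appropriate) and identify the colorings with a fixed $A$ with the proper colorings of $G-A$ using colors $\{1,\ldots,N\}$, then sum over $A$. No substantive difference from the paper's proof.
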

\begin{proof}
We build the proper colorings $\kappa\in\Col_0(G_*)$ contributing
to the coefficient of $x_0^k$ in $X_0(G_*)$ as follows.
First, choose a $k$-element independent set $A\in I(G_*,k)$
and color the vertices of $A$ (including the root $r$) with color $0$.
Second, choose any proper coloring of the graph $G-A$ using
color set $\{1,2,\ldots,N\}$. The formula in (a) follows from the
Sum and Product Rules for Weighted Sets. Part (b) is proved
in the same way, but now we choose $A\in I'(G_*,k)$ to ensure
the root does not receive color $0$.
\end{proof}

%%%%%%%%%%%%%%%%%%%
\subsection{Reformulation of Stanley's Conjecture}
\label{sec:recovering}

The following proposition illuminates the relationship
between Stanley's original conjecture for unrooted trees 
(Conjecture~\ref{conj:stan}) and the analogous
result for rooted trees (Theorem~\ref{thm:best}).

\begin{prop}\label{prop:recover}
Conjecture~\ref{conj:stan} holds if and only if
for every tree $T$, the multiset $[X_0(T_*^r): r\in V(T)]$
is uniquely determined by the chromatic symmetric function $X_T$.
\end{prop}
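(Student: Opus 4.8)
The plan is to prove both directions of the biconditional directly, using the already-established relations between $X_0(T_*^r)$, $X_{\neq 0}(T_*^r)$, and $X_T$.

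\emph{The easy direction ($\Leftarrow$).} Suppose that for every tree, the multiset $[X_0(T_*^r): r\in V(T)]$ is recoverable from $X_T$. I would argue that $X_T = X_U$ forces $T\cong U$. Given $X_T=X_U$, the hypothesis yields equality of the two multisets $[X_0(T_*^r): r\in V(T)]$ and $[X_0(U_*^s): s\in V(U)]$. In particular, both trees have the same number of vertices $n$, and there is some root $r$ of $T$ and some root $s$ of $U$ with $X_0(T_*^r)=X_0(U_*^s)$. By Theorem~\ref{thm:best}, $T_*^r\cong U_*^s$ as rooted graphs, and forgetting the roots gives $T\cong U$ as unrooted graphs. (Here I only need the existence of one matching pair in the multiset, not the full multiset equality, so this direction is quite robust.)

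\emph{The harder direction ($\Rightarrow$).} Suppose Conjecture~\ref{conj:stan} holds; I must show the multiset $[X_0(T_*^r): r\in V(T)]$ is a function of $X_T$ alone. The key obstacle is that $X_T$ does not obviously ``see'' where the roots are, so I cannot compute individual $X_0(T_*^r)$ from $X_T$; I can only hope to recover the whole multiset. The natural strategy: given $X_T$, Conjecture~\ref{conj:stan} lets me recover the isomorphism type of $T$ from $X_T$ (since if $X_U=X_T$ then $U\cong T$, so $X_T$ determines $T$ up to isomorphism). Once I know $T$ up to isomorphism, I can compute the multiset $[X_0(T_*^r):r\in V(T)]$ directly from the combinatorial structure of $T$. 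The only subtlety is well-definedness: I should check that isomorphic trees yield equal multisets of rooted polynomials. This holds because a graph isomorphism $\phi:T\to T'$ carries the rooted graph $T_*^r$ to $T_*'^{\phi(r)}$, and isomorphic rooted graphs have equal $X_0$ (as noted in the proof of Theorem~\ref{thm:best}); since $\phi$ permutes the vertices, it induces a bijection of the indexing sets, so the multisets coincide. Therefore the map $T\mapsto [X_0(T_*^r):r\in V(T)]$ descends to isomorphism classes, and composing with the (conjecturally well-defined) map $X_T\mapsto (\text{iso class of }T)$ exhibits the multiset as a function of $X_T$.

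\emph{Assembly.} I would write the $(\Leftarrow)$ direction first as a short paragraph invoking Theorem~\ref{thm:best}, then handle $(\Rightarrow)$ by the two-step recovery (recover iso class of $T$ from $X_T$ via the conjecture, then read off the multiset), pausing only to note the well-definedness check. I expect the main obstacle to be purely expository — making precise the phrase ``uniquely determined by $X_T$'' as asserting the existence of a well-defined function on the set of realized chromatic symmetric functions of trees — rather than any real mathematical difficulty; the content is entirely carried by Theorem~\ref{thm:best} and the definition of the conjecture.
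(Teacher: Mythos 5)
Your proposal is correct and takes essentially the same approach as the paper: the ($\Leftarrow$) direction extracts a single matching pair from the multiset equality and applies Theorem~\ref{thm:best} to get a rooted (hence unrooted) isomorphism, and the ($\Rightarrow$) direction uses the conjecture to pin down the isomorphism class of $T$ from $X_T$ together with the observation that a graph isomorphism induces rooted isomorphisms $T_*^r\cong U_*^{f(r)}$ and therefore equal multisets. The only cosmetic difference is that the paper phrases the second direction as a proof by contradiction rather than as a direct well-definedness argument.
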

\begin{proof}
Assume the condition on multisets is true; we prove Conjecture~\ref{conj:stan}.
Let $T$ and $U$ be any $n$-vertex trees with $X_T=X_U$.  
By the multiset condition, $[X_0(T_*^r): r \in V(T)]=[X_0(U_*^s): s \in V(U)]$.
So there exist $r\in V(T)$ and $s\in V(U)$ with
$X_0(T_*^r)=X_0(U_*^s)$.  By Theorem~\ref{thm:best},
$T_*^r$ and $U_*^s$ are isomorphic as rooted graphs.
So $T$ and $U$ are isomorphic as graphs.

We prove the converse implication by contradiction.
Assume Conjecture~\ref{conj:stan} is true, 
but the condition on multisets is false.
Then there exist trees $T$ and $U$ such that $X_T=X_U$,
but $[X_0(T_*^r):r\in V(T)]\neq [X_0(U_*^s):s\in V(U)]$.
Since $X_T=X_U$, Conjecture~\ref{conj:stan} says $T\cong U$,
so there is a graph isomorphism $f:V(T)\rightarrow V(U)$.
For each $r\in V(T)$, $f$ is a rooted graph isomorphism
from $T_*^r$ to $U_*^{f(r)}$. But then $X_0(T_*^r)=X_0(U_*^{f(r)})$
for all $r\in V(T)$, which means the two multisets are equal.
This gives the required contradiction.
\end{proof}

At present, we do not know how to recover the 
multiset $[X_0(T_*^r):r\in V(T)]$ from $X_T$. 
In fact, this cannot be done for general graphs $G$, 
as the following example shows. This example uses
the \emph{augmented monomial symmetric functions} $\mt_{\lambda}$,
defined by $\mt_{\lambda}=r_1!r_2!\cdots m_{\lambda}$
if $\lambda$ has $r_1$ parts equal to $1$, $r_2$ parts equal to $2$, 
and so on.
% NAL: \cite[Prop. 2.4,pg. 170]{stanley} gives the $\mt$-expansion of
%  $X_G$ in terms of ``stable partitions of $G$ of type $\lambda$.''

\begin{example}
Stanley~\cite[pg. 170]{stanley} gave this example of two non-isomorphic graphs
with equal chromatic symmetric functions:
\begin{equation*}
    X_{\raisebox{-0.4\height}{\includegraphics[width=0.8cm]{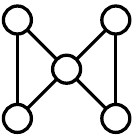}}} =
    X_{\raisebox{-0.4\height}{\includegraphics[width=1.1cm]{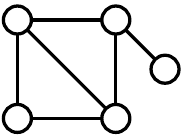}}} =
    2\mt_{221} + 4\mt_{2111} + \mt_{11111}.
\end{equation*}
Let $z=x_0$.  For the first graph $G$, the multiset $[X_0(G_*^r)]$ consists of
 \begin{align*}
    X_0\left(\raisebox{-0.4\height}{\includegraphics[width=0.8cm]{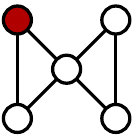}}
\right) =
    X_0\left(\raisebox{-0.4\height}{\includegraphics[width=0.8cm]{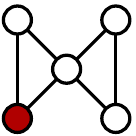}}
\right) =
    X_0\left(\raisebox{-0.4\height}{\includegraphics[width=0.8cm]{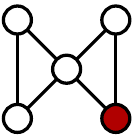}}
\right) =
    X_0\left(\raisebox{-0.4\height}{\includegraphics[width=0.8cm]{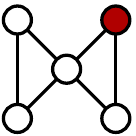}}
\right) &=
    z(2\mt_{211} + \mt_{1111}) + z^2(2\mt_{21} + 2\mt_{111}),\\
    X_0\left(\raisebox{-0.4\height}{\includegraphics[width=0.8cm]{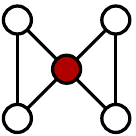}}
\right) &=
    z(2\mt_{22} + 4\mt_{211} + \mt_{1111}).
  \end{align*}
For the second graph $H$, the multiset $[X_0(H_*^s)]$ consists of
  \begin{align*}
    X_0\left(\raisebox{-0.4\height}{\includegraphics[width=1.1cm]{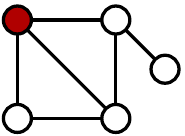}}
\right) =
    X_0\left(\raisebox{-0.4\height}{\includegraphics[width=1.1cm]{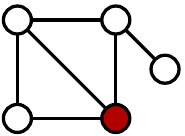}}
\right) &=
    z(\mt_{22} + 3\mt_{211} + \mt_{1111}) + z^2(\mt_{21} + \mt_{111}),\\
    X_0\left(\raisebox{-0.4\height}{\includegraphics[width=1.1cm]{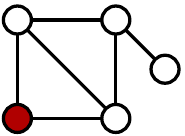}}
\right) &=
    z(2\mt_{211} + \mt_{1111}) + z^2(2\mt_{21} + 2\mt_{111}),\\
    X_0\left(\raisebox{-0.4\height}{\includegraphics[width=1.1cm]{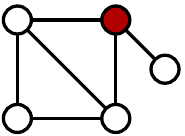}}
\right) &=
    z(3\mt_{211} + \mt_{1111}) + z^2(2\mt_{21} + \mt_{111}),\\
    X_0\left(\raisebox{-0.4\height}{\includegraphics[width=1.1cm]{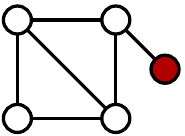}}
\right) &=
    z(\mt_{211} + \mt_{1111}) + z^2(2\mt_{21} + 3\mt_{111}).\\
\end{align*}
This calculation also shows that there exist non-isomorphic rooted graphs
$G_*$ and $H_*$ with $X_0(G_*)=X_0(H_*)$. Indeed, we may take
\[ G_*= \raisebox{-0.4\height}{\includegraphics[width=0.8cm]{stan-A-1}}
\quad\mbox{ and }\quad
   H_*= \raisebox{-0.4\height}{\includegraphics[width=1.1cm]{stan-B-2}}.  \]
\end{example}

Despite this example, there is a simple way
to recover the sum of all $X_0(G_*^r)$ from $X_G$. 

\begin{prop}\label{prop:pointing} For any graph $G$, 
\begin{equation}\label{eq:pointing}
x_0\frac{\partial}{\partial x_0} X_G(x_0,\ldots,x_N)
 = \sum_{r\in V(G)} X_0(G_*^r;x_0,\ldots,x_N).
\end{equation}
\end{prop}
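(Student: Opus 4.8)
The plan is to prove the identity by comparing the monomial expansions of both sides, or more efficiently by a weighted-set bijection that tracks the exponent of $x_0$. First I would recall that $X_G = \sum_{\kappa \in \Col(G)} \wt(\kappa)$, so applying $x_0\frac{\partial}{\partial x_0}$ multiplies each monomial $\wt(\kappa) = \prod_{v} x_{\kappa(v)}$ by the exponent of $x_0$ appearing in it, namely the number of vertices $v$ with $\kappa(v) = 0$. Thus the left side equals $\sum_{\kappa \in \Col(G)} |\kappa^{-1}(0)| \cdot \wt(\kappa)$, where $|\kappa^{-1}(0)|$ is the number of vertices colored $0$ under $\kappa$.

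Next I would rewrite this counting factor as a sum: $|\kappa^{-1}(0)| = \sum_{r \in V(G)} [\kappa(r) = 0]$. Substituting and interchanging the order of summation gives
\[
 x_0\frac{\partial}{\partial x_0} X_G = \sum_{r \in V(G)} \sum_{\substack{\kappa \in \Col(G) \\ \kappa(r) = 0}} \wt(\kappa) = \sum_{r \in V(G)} \sum_{\kappa \in \Col_0(G_*^r)} \wt(\kappa) = \sum_{r \in V(G)} X_0(G_*^r),
\]
since $\Col_0(G_*^r)$ is by definition the set of proper colorings of $G$ whose root $r$ receives color $0$, and $X_0(G_*^r) = \sum_{\kappa \in \Col_0(G_*^r)} \wt(\kappa)$. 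This is exactly~\eqref{eq:pointing}.

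There is essentially no obstacle here: the argument is a routine application of the combinatorial meaning of the operator $x_0 \partial/\partial x_0$ together with Fubini's principle for finite double sums (the Sum Rule for Weighted Sets). The only point requiring a word of care is that $x_0 \partial/\partial x_0$ acts on a monomial $x_0^{e_0} x_1^{e_1} \cdots x_N^{e_N}$ by returning $e_0 \cdot x_0^{e_0} x_1^{e_1} \cdots x_N^{e_N}$, which is what makes the exponent of $x_0$ appear as a multiplicative factor; everything else follows by linearity and the definitions.
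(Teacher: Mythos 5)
Your proposal is correct and is essentially the same argument as in the paper: both observe that $x_0\frac{\partial}{\partial x_0}$ multiplies the monomial of each coloring $\kappa$ by $e_0 = |\kappa^{-1}(0)|$, and then double-count that contribution over the $e_0$ choices of root $r$ with $\kappa(r)=0$. The paper phrases this via the pointing construction (citing Flajolet--Sedgewick) rather than an explicit interchange of summation, but the underlying combinatorial reasoning is identical.
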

\begin{proof}
The proof is an application of the \emph{pointing construction}
to the generating function $X_G$ (see~\cite[Theorem I.4]{flajolet}).
Each coloring $\kappa\in\Col(G)$ contributes a monomial
$x_0^{e_0}\cdots x_N^{e_N}$ to $X_G$. Applying the operator
$x_0\frac{\partial}{\partial x_0}$ multiplies this monomial by $e_0$,
which is the number of vertices colored $0$ by $\kappa$.
The coloring $\kappa$ belongs to exactly $e_0$ of the
sets $\Col_0(G_*^r)$ as $r$ varies through $V(G)$,
since the root $r$ must be colored $0$. Thus, each coloring $\kappa$
makes the same contribution (namely $e_0x_0^{e_0}\cdots x_N^{e_N}$)
to both sides of~\eqref{eq:pointing}.
\end{proof}

\begin{example}\label{ex:path3c}
Continuing Example~\ref{ex:path3a}, the following
calculation illustrates Proposition~\ref{prop:pointing}:
\begin{align*}
  X_0(\raisebox{-0.1\height}{\includegraphics[width=1cm]{ex-A}}) &+ X_0(\raisebox{-0.1\height}{\includegraphics[width=1cm]{ex-B}}) + X_0(\raisebox{-0.1\height}{\includegraphics[width=1cm]{ex-C}}) \\
  &= (2x_0x_1x_2 + x_0^2(x_1+x_2)) 
+ (2x_0x_1x_2 + x_0(x_1^2 + x_2^2)) + (2x_0x_1x_2 + x_0^2(x_1+x_2))\\
  &= 6x_0x_1x_2 + 2x_0^2(x_1+x_2) + x_0(x_1^2 + x_2^2) = x_0\frac{\partial}{\partial x_0} X_{\includegraphics[width=1cm]{ex}}(x_0,x_1,x_2).
\end{align*}
\end{example}

\subsection{Analogue of the $(\boldsymbol{3}+\boldsymbol{1})$-conjecture}
\label{sec:epos}

Much of the research pertaining to $X_G$ has focused on characterizing
the coefficients of $X_G$ when expressed in various bases of the space
$\Lambda$ of symmetric functions. One outstanding conjecture by
Stanley and Stembridge is the following $e$-positivity conjecture
(see~\cite{AWW,gasharov,pawlowski,shareshian-wachs}
for recent research relevant to this conjecture).

Let $(\mathbf{3}+\mathbf{1})$ be the poset $(\{0,1,2,3\},\preceq)$,
where $1\preceq 2\preceq 3$ and $0$ is incomparable to $1,2,3$.
Given any posets $(P,\leq_P)$ and $(Q,\leq_Q)$, 
we say that $P$ is \emph{$Q$-free} 
if $Q$ is not isomorphic to any induced subposet of $P$. The
\emph{incomparability graph $G(P)$} of a poset $(P,\leq_P)$ 
is the undirected graph with vertex set $P$
and edges $\{x,y\}$ for all $x,y\in P$ such that
$x$ and $y$ are incomparable under $\leq_P$.

\begin{conjecture}{\cite[Conj. 5.1]{stanley}, 
\cite[Conj. 5.5]{stanley-stembridge}}\label{conj:ss}
If $(P,\leq_P)$ is $(\boldsymbol{3}+\boldsymbol{1})$-free,
then $X_{G(P)}$ is $e$-positive.
\end{conjecture}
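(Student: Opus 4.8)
The plan is to recognize that Conjecture~\ref{conj:ss} is the celebrated Stanley--Stembridge $e$-positivity conjecture, for which no elementary proof is known, and to organize an attack around the reductions and partial results already in the literature rather than pretend a short argument is available. First I would reduce to the case where $(P,\preceq)$ is a \emph{natural unit interval order}: by the transfer argument of Guay-Paquet, $X_{G(P)}$ for an arbitrary $(\boldsymbol{3}+\boldsymbol{1})$-free poset $P$ lies in the convex cone generated by the $X_{G(P')}$ as $P'$ ranges over posets that are simultaneously $(\boldsymbol{3}+\boldsymbol{1})$-free and $(\boldsymbol{2}+\boldsymbol{2})$-free (the unit interval orders), so it suffices to prove $e$-positivity in that restricted class, where $G(P)$ is a unit interval graph encoded by a Dyck-path-like sequence of parameters. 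This trades an awkward forbidden-subposet hypothesis for a combinatorially rigid structure amenable to induction.

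Next I would try to promote Gasharov's theorem to the $e$-basis. Gasharov~\cite{gasharov} proved, via a sign-reversing involution on $P$-tableaux, that $X_{G(P)}$ is Schur-positive whenever $P$ is $(\boldsymbol{3}+\boldsymbol{1})$-free, so the entire remaining obstruction is the passage from $s$-positivity to $e$-positivity. One route is to build a weight-preserving, sign-reversing involution directly on the signed objects appearing in Stanley's expansion of $\omega(X_{G(P)})$ over acyclic orientations, matching them so that only those contributing to a fixed $e_\lambda$ with positive sign survive. A second route is to work with the chromatic quasisymmetric refinement $X_{G(P)}(\bx;q)$ of Shareshian--Wachs~\cite{shareshian-wachs}, prove $e$-positivity there (which specializes at $q=1$ to the conjecture), and exploit the identification of its coefficients with the graded pieces of Tymoczko's dot-action representation on the cohomology of a regular semisimple Hessenberg variety (Brosnan--Chow, Guay-Paquet), reducing the problem to a $\mathrm{GL}_2$-branching computation.

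The hard part --- and the reason the conjecture remains open --- is precisely this last step: there is at present no manifestly nonnegative combinatorial formula for the $e_\lambda$-coefficients of $X_{G(P)}$, and the known linear relations among these functions (the modular law of Abreu--Nigro, local moves on the underlying Dyck path, triangular-ladder and lollipop recursions) suffice to establish $e$-positivity only for restricted families --- paths, cycles, spiders, certain unions and melting lollipops --- rather than in full generality. A realistic plan would therefore be either (i) to combine the Guay-Paquet reduction with an induction along elementary Dyck-path moves, proving that each move preserves $e$-positivity and bottoming out at $G=K_n$ and disjoint unions, with the obstacle being how to control the induction when individual moves introduce negative intermediate terms; or (ii) to adapt one of the very recent approaches --- the geometric arguments of Brosnan--Chow or the combinatorial extraction of the $e$-expansion of Hikita --- that produce the coefficients as a nonnegative count over a family of objects indexed by the Hessenberg function. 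I would flag the induction in (i), or equivalently the construction of an explicit $e$-positive combinatorial model in (ii), as the genuine obstacle; a short self-contained proof is not to be expected here.
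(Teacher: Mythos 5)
This statement is labeled a \emph{conjecture} in the paper, with attributions to Stanley and to Stanley--Stembridge; the paper offers no proof and does not claim one. You have correctly recognized that this is the celebrated Stanley--Stembridge $e$-positivity conjecture and, rather than fabricating an argument, you have given an accurate and well-organized account of the known reductions (Guay-Paquet's transfer to unit interval orders), the nearest positive result (Gasharov's Schur-positivity), the quasisymmetric and Hessenberg-variety refinements of Shareshian--Wachs and Brosnan--Chow, and the partial families where $e$-positivity is known. That is the right response: there is nothing in the paper to match your proposal against, and your assessment that no short self-contained proof should be expected is sound. The one thing worth adding is context the paper itself supplies: the authors are not trying to prove Conjecture~\ref{conj:ss}, but rather propose a \emph{refinement} of it in the rooted setting (asserting $e$-positivity of $X_0(G(P)_*^r)$ when $P-r$ is $(\mathbf{3}+\mathbf{1})$-free), supported only by computer checks for small posets. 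So if you were asked to engage with what this paper actually does around Conjecture~\ref{conj:ss}, the relevant content is that proposed refinement and Proposition~\ref{prop:coeff-zk}, which expresses the coefficients of $x_0^k$ in $X_0(G_*)$ as sums of ordinary chromatic symmetric functions $X_{G-A}$ over independent sets $A$ containing the root --- a decomposition one might hope to leverage inductively, though the paper does not attempt this.
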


If we view $X_0(G_*^r;x_0,\ldots,x_N)$ as an element of
$\Lambda[x_0]$, the coefficient of $x_0$ is
$X_{G-r}(x_1,\ldots,x_N)$. For $f\in \Lambda[z]$, we say that $f$
is \emph{$e$-positive} if the coefficient of $z^k$ is an $e$-positive
element of $\Lambda$ for each $k\geq 0$.  
Here is a refinement of Conjecture~\ref{conj:ss},
which we have checked (by computer) for posets with at most $8$ vertices.

\begin{conjecture}
  Let $(P,\leq_P)$ be a poset. For all $r\in P$, if $P-r$ is 
$(\boldsymbol{3}+\boldsymbol{1})$-free, then $X_0(G(P)_*^r)$ is $e$-positive.
\end{conjecture}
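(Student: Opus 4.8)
The plan is to reduce the statement to the Stanley--Stembridge $e$-positivity conjecture (Conjecture~\ref{conj:ss}). First I would expand $X_0(G(P)_*^r)$ as a polynomial in $z=x_0$ and apply Proposition~\ref{prop:coeff-zk}(a): for each $k\geq 1$ the coefficient of $z^k$ equals $\sum_{A} X_{G(P)-A}$, where $A$ ranges over the $k$-element independent subsets of $G(P)$ that contain the root $r$; the coefficient of $z^0$ is $0$, since every coloring counted by $X_0$ places color $0$ on the root, so it is trivially $e$-positive.

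Next I would pass from graphs back to posets. An independent set of the incomparability graph $G(P)$ is precisely a chain of $P$, and for any such set $A$ the deleted graph $G(P)-A$ is exactly the incomparability graph $G(P-A)$ of the induced subposet $P-A$. Since each $A$ occurring in the sum contains $r$, the poset $P-A$ is an induced subposet of $P-r$; as being $(\boldsymbol{3}+\boldsymbol{1})$-free is inherited by induced subposets, the hypothesis that $P-r$ is $(\boldsymbol{3}+\boldsymbol{1})$-free forces every such $P-A$ to be $(\boldsymbol{3}+\boldsymbol{1})$-free. Conjecture~\ref{conj:ss} then gives that each $X_{G(P-A)}$ is $e$-positive, and a finite (possibly empty) sum of $e$-positive symmetric functions is $e$-positive, so every coefficient of $z^k$ is $e$-positive. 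Hence $X_0(G(P)_*^r)$ is $e$-positive in $\Lambda[z]$.

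The main obstacle is that this argument is only conditional, since Conjecture~\ref{conj:ss} is itself open. Moreover the reduction is reversible: the coefficient of $z^1$ in $X_0(G(P)_*^r)$ is just $X_{G(P-r)}$, and every $(\boldsymbol{3}+\boldsymbol{1})$-free poset $Q$ is realized as $P-r$ by adjoining one extra element $r$ to $Q$, so the conjecture above is \emph{equivalent} to the Stanley--Stembridge conjecture rather than strictly stronger. An unconditional proof would therefore require genuinely new input on the $e$-positivity of incomparability graphs, and is exactly as hard as the original problem.
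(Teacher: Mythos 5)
Be aware that the paper does not prove this statement at all: it is stated as a conjecture, supported only by computer verification for posets with at most $8$ elements, so there is no proof of the paper's to compare yours against. What you have written is accordingly not a proof of the statement but a conditional reduction, and you say so yourself. The reduction itself is sound: by Proposition~\ref{prop:coeff-zk}(a) the coefficient of $z^k$ ($k\geq 1$) in $X_0(G(P)_*^r)$ is $\sum_A X_{G(P)-A}$ over $k$-element independent sets $A$ of $G(P)$ containing $r$; independent sets of the incomparability graph are chains of $P$; $G(P)-A$ is the induced subgraph on $P\setminus A$, hence equals $G(P-A)$; and since $r\in A$, the induced subposet $P-A$ sits inside $P-r$, so it inherits $(\boldsymbol{3}+\boldsymbol{1})$-freeness. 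Granting Conjecture~\ref{conj:ss}, every coefficient is then a sum of $e$-positive symmetric functions. Your observation that the implication reverses --- the coefficient of $z^1$ is $X_{G(P)-r}=X_{G(P-r)}$, and every $(\boldsymbol{3}+\boldsymbol{1})$-free poset arises as some $P-r$ --- is also correct and is a worthwhile remark the paper does not make explicit: it pins down the precise sense in which this conjecture ``refines'' the Stanley--Stembridge conjecture, namely that the two are logically equivalent given your argument for the forward direction.

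The genuine gap is exactly the one you flag: the entire argument rests on Conjecture~\ref{conj:ss}, which in the setting of this paper is open, so nothing here establishes the statement unconditionally, and by your own equivalence no argument of this reduction-only type ever could. If you want to contribute something checkable in the spirit of the paper, the honest framing is as a proposition of the form ``Conjecture~\ref{conj:ss} holds if and only if the rooted conjecture holds,'' proved exactly as you outline, rather than as a proof of the conjecture itself.
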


When $(P,\leq_P)$ is $(\boldsymbol{3}+\boldsymbol{1})$-free, $G(P)$ is
claw-free (i.e., has no vertex-induced subgraph isomorphic to the
complete bipartite graph $K_{1,3}$). One might hope that we could
strengthen Conjecture~\ref{conj:ss} to assert that $X_0(G_*^r)$ is
$e$-positive whenever $G-r$ is claw-free. However, as discussed
in~\cite{stanley}, claw-free graphs do not have $e$-positive chromatic
symmetric functions in general (see also~\cite{dahlberg}).

%%%%%%%%%%%%%%%%%%%%%%%%%%%%%%%%%%%%%%%%%%%%%%%%%%%%%%%%%%%%%%%%%%%%%%%%%%%%%%
\subsection{Power-Sum Formula}
\label{subsec:psum}

Stanley~\cite[Thm. 2.5]{stanley} 
used an inclusion--exclusion argument to find the power-sum
expansion of the chromatic symmetric function $X_G$.  For any edge
$e\in E(G)$, define $A_e$ to be the set of (non-proper) colorings 
$\kappa:V(G)\rightarrow \Z_{\geq 0}$ such that the endpoints of edge $e$ 
receive the same color. For any $S\subseteq E(G)$, 
let $G_S$ denote the graph with vertex set $V(G)$ and edge set $S$.
Let $\lambda(G_S)$ be the integer partition whose parts are the
sizes of the connected components of $G_S$ in weakly decreasing order.
Let $p_{\lambda}$ be the power-sum symmetric function indexed by $\lambda$.
A coloring $\kappa:V(G)\rightarrow \Z_{\geq 0}$ is proper if and only
if it does not belong to the union $\bigcup_{e\in E(G)} A_e$.
Hence, the Inclusion--Exclusion Formula leads to 
\begin{equation}\label{eq:stan-p}
  X_G = \sum_{S\subseteq E(G)} (-1)^{|S|}p_{\lambda(G_S)}.
\end{equation}

We can adapt Stanley's argument to obtain an analogous expansion
of $X_0(G_*;x_0,x_1,\ldots,x_N)$. (This formula is closely related to
the power-sum expansion of Pawlowski's pointed chromatic symmetric
functions, as we explain in Section~\ref{subsec:pawlowski-SF}.)
Given a rooted graph $G_*^v$ with root $v$ and $S\subseteq E(G)$,
let $\lambda_v^+(G_S)$ be the size of the component of $G_S$ containing 
$v$, and let $\lambda_v^-(G_S)$ be the partition $\lambda(G_S)$
with a single part of size $\lambda_v^+(G_S)$ deleted.

\begin{prop}\label{prop:p-exp}
For all rooted graphs $G_*^v$ and $N\geq |V(G)|$,
\begin{equation}\label{eq:p-exp}
 X_0(G_*^v;x_0,x_1,\ldots,x_N)
 =\sum_{S\subseteq E(G)} (-1)^{|S|}
  p_{\lambda_v^-(G_S)}(x_0,x_1,\ldots,x_N)x_0^{\lambda_v^+(G_S)}.
\end{equation}
\end{prop}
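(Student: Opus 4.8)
The plan is to mimic Stanley's inclusion--exclusion derivation of~\eqref{eq:stan-p}, while carefully tracking the extra constraint that the root $v$ must receive color $0$. Fix $N\geq|V(G)|$ and write $C=\{0,1,\ldots,N\}$. For each edge $e\in E(G)$, let $B_e$ be the set of colorings $\kappa:V(G)\to C$ such that $\kappa(v)=0$ and the two endpoints of $e$ receive the same color. A coloring $\kappa$ with $\kappa(v)=0$ lies in $\Col_0(G_*^v)$ precisely when it avoids every $B_e$, so applying the Inclusion--Exclusion Formula to the weighted set of all colorings sending $v$ to $0$ gives
\[
 X_0(G_*^v)=\sum_{S\subseteq E(G)}(-1)^{|S|}\sum_{\kappa\in B_S}\wt(\kappa),
\]
where $B_S=\bigcap_{e\in S}B_e$ and $B_\emptyset$ is the set of all colorings with $\kappa(v)=0$.

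The next step is to identify $B_S$ explicitly: a coloring lies in $B_S$ if and only if $\kappa(v)=0$ and $\kappa$ is constant on every connected component of the spanning subgraph $G_S=(V(G),S)$ --- equivalently, $\kappa$ is a free choice of one color from $C$ for each component of $G_S$, subject only to the constraint that the component containing $v$ gets color $0$. Since the weight of such a $\kappa$ factors over components, the component containing $v$ (which has $\lambda_v^+(G_S)$ vertices) contributes the fixed monomial $x_0^{\lambda_v^+(G_S)}$, while a component with $m$ vertices not containing $v$ contributes $\sum_{i=0}^N x_i^m=p_m(x_0,\ldots,x_N)$ after summing over its color. Multiplying over all components, and recalling that the multiset of component sizes other than the one through $v$ is exactly $\lambda_v^-(G_S)$, yields
\[
 \sum_{\kappa\in B_S}\wt(\kappa)=p_{\lambda_v^-(G_S)}(x_0,\ldots,x_N)\,x_0^{\lambda_v^+(G_S)},
\]
and substituting this into the inclusion--exclusion sum produces~\eqref{eq:p-exp}.

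I do not expect a serious obstacle here, since the argument is a direct adaptation of Stanley's; the only point requiring care is the bookkeeping, namely that forcing $\kappa(v)=0$ removes the color-choice freedom from exactly one component of each $G_S$ --- producing the pure power $x_0^{\lambda_v^+(G_S)}$ rather than a power sum --- while the remaining power sums must still be taken over the full variable set $x_0,x_1,\ldots,x_N$, because every color other than $0$ remains available to the non-root components. I would also note that the hypothesis $N\geq|V(G)|$ is inessential: both sides of~\eqref{eq:p-exp} are polynomials in $x_0,\ldots,x_N$, the identity holds formally for every $N\geq 0$, and it stabilizes to an identity in $\Lambda[x_0]$ as $N\to\infty$; the stated bound merely matches the conventions used elsewhere in the paper.
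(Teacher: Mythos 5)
Your proof is correct and follows essentially the same inclusion--exclusion argument as the paper: your sets $B_e$ and $B_S$ are exactly the paper's $A_e^v$ and $\bigcap_{e\in S}A_e^v$, and the computation of the weighted sum over $B_S$ (isolating the component through $v$ as the pure power $x_0^{\lambda_v^+(G_S)}$ and producing a power-sum factor for each other component) matches the paper's argument. Your closing remark that the hypothesis $N\geq|V(G)|$ is inessential is accurate, since the argument is a formal polynomial identity valid for every $N\geq 0$.
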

\begin{proof}
Let $A^v$ be the set of all (not necessarily
proper) colorings $\kappa:V(G)\rightarrow\{0,1,\ldots,N\}$ where
$\kappa(v)=0$.  For each edge $e$, let $A_e^v$ be the set of
$\kappa\in A^v$ that assign the same color to the two endpoints of
edge $e$.  For each $S\subseteq E(G)$, we can count the weighted set
$\bigcap_{e\in S} A_e^v$ as follows. Write $\lambda_v^+(G_s)=k$, where
$k$ is the size of the component $C^v$ of $G_S$ containing the root
$v$.  Write $\lambda_v^-(G_S)=(\lambda_1,\ldots,\lambda_m)$, where
each $\lambda_i$ is the size of another component $C_i$ of $G_S$.  We
build a coloring in $\bigcap_{e\in S} A_e^v$ as follows.  Color all
$k$ vertices in $C^v$ with color $0$ (as we must), giving a weight
contribution of $x_0^k$.  For each remaining component $C_i$, choose
the common color for all $\lambda_i$ vertices in that component.  The
weight contribution for that choice is
$x_0^{\lambda_i}+x_1^{\lambda_i}+\cdots+x_N^{\lambda_i}
=p_{\lambda_i}(x_0,x_1,\ldots,x_N)$. By the Product Rule, the
generating function for $\bigcap_{e\in S} A_e^v$ is
$p_{\lambda_v^-(G_S)}(x_0,x_1,\ldots,x_N)x_0^{\lambda_v^+(G_S)}$.
Equation~\eqref{eq:p-exp} follows at once from the
Inclusion--Exclusion Formula, since $X_0(G_*^v)$ counts colorings in
$A^v$ outside $\bigcup_{e\in E(G)} A_e^v$.
\end{proof}

%%%%%%%%%%%%%%%%%%%%%%%%%%%%%%%%%%%%%%%%%%%%%%%%%%%%%%%%%%%%%%%%%%%%%%%%%
\section{Other Variants of Stanley's Chromatic Symmetric Functions}
\label{sec:others}

In this section, we review some of the previously studied polynomials
and algebraic constructs related to the chromatic symmetric function. 
When applicable, we explain the connection between these objects
and the polynomials $X_0(G_*)$.

\subsection{Pawlowski's Pointed Chromatic Symmetric Functions}
\label{subsec:pawlowski-SF}

Given a rooted graph $G_*^v$ with root $v$ and $S\subseteq E(G)$,
Pawlowski's \emph{pointed chromatic symmetric function} is defined
in~\cite[Def. 3.1]{pawlowski} as
\begin{equation}\label{eq:pointed-def}
  P_{G,v} = \sum_{S \subseteq E(G)} 
(-1)^{|S|} p_{\lambda_v^-(G_S)} z^{\lambda^+_v(G_S)-1}\in\Lambda[z],
\end{equation}
where $p_{\lambda}=1$ if $\lambda$ has no positive parts.
The polynomial version using variables $x_1,\ldots,x_N$ is
\begin{equation}\label{eq:pointed-def-5}
  P_{G,v}(x_1,\ldots,x_N) = \sum_{S \subseteq E(G)} 
(-1)^{|S|} p_{\lambda_v^-(G_S)}(x_1,\ldots,x_N) z^{\lambda^+_v(G_S)-1}\in\Lambda_N[z].
\end{equation}

Let $x_0=z$. 
Comparing~\eqref{eq:pointed-def} and~\eqref{eq:p-exp}, we see that
$X_0(G_*^v;z,x_1,\ldots,x_N)$ is the transformation of $P_{G,v}$
obtained by replacing each abstract power-sum $p_k$ by
$z^k+x_1^k+\cdots+x_N^k$ (as opposed to $x_1^k+\cdots+x_N^k$)
and multiplying by $z$. This extra $z$ accounts for
the subtracted $1$ in the exponent of $z$ in~\eqref{eq:pointed-def}.
We state this result more formally as follows.

\begin{prop}\label{prop:X0-vs-paw}
Regard $X_0(G_*^v)$ and $P_{G,v}$ as elements of $\Lambda[z]$,
where $\Lambda=\Q[p_k:k\geq 1]$ is the ring of symmetric functions.
Define evaluation homomorphisms $\phi$, $\psi$ on $\Lambda$ by
setting $\phi(p_k)=z^k+p_k$ and $\psi(p_k)=-z^k+p_k$ for all $k$.
Then $X_0(G_*^v)=\phi(zP_{G,v})$ and $zP_{G,v}=\psi(X_0(G_*^v))$.
\end{prop}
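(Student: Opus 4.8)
The plan is to read off the identity $X_0(G_*^v)=\phi(zP_{G,v})$ directly from the two power-sum expansions already established, namely Pawlowski's definition~\eqref{eq:pointed-def} and Proposition~\ref{prop:p-exp}, and then invert it to obtain $zP_{G,v}=\psi(X_0(G_*^v))$. The key observation is that both $\phi$ and $\psi$ are well-defined ring homomorphisms on $\Lambda=\Q[p_1,p_2,\ldots]$, since the $p_k$ are algebraically independent, so it suffices to specify their effect on each generator $p_k$ and then apply them term-by-term to any power-sum expansion.

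First I would fix $S\subseteq E(G)$ and unwind the notation: write $\lambda_v^-(G_S)=(\lambda_1,\ldots,\lambda_m)$ and $\lambda_v^+(G_S)=k$. The corresponding summand of $zP_{G,v}$ is $(-1)^{|S|}\,z\cdot p_{\lambda_1}\cdots p_{\lambda_m}\, z^{k-1}=(-1)^{|S|}z^k p_{\lambda_1}\cdots p_{\lambda_m}$. Applying $\phi$, which sends each $p_{\lambda_i}$ to $z^{\lambda_i}+p_{\lambda_i}$ (note that for $i$ with $\lambda_i$ a part of the partition this is exactly $z^{\lambda_i}+x_1^{\lambda_i}+\cdots+x_N^{\lambda_i}=p_{\lambda_i}(z,x_1,\ldots,x_N)$ in the polynomial realization) and fixes the prefactor $z^k$, turns this summand into $(-1)^{|S|}z^k\,p_{\lambda_v^-(G_S)}(z,x_1,\ldots,x_N)$. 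Summing over all $S\subseteq E(G)$ reproduces exactly the right-hand side of~\eqref{eq:p-exp}, which is $X_0(G_*^v)$. One small bookkeeping point to state carefully: when $\lambda_v^-(G_S)$ has no parts (i.e., $G_S$ is connected, so the whole graph is the root's component), the convention $p_{\emptyset}=1$ matches, and the summand is just $(-1)^{|S|}z^k$ on both sides; this is the ``extra $z$'' remark in the text made precise.

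For the reverse direction, I would verify that $\psi$ is the inverse of $\phi$ as endomorphisms of $\Lambda[z]$ — indeed $\psi(\phi(p_k))=\psi(z^k+p_k)=z^k+(-z^k+p_k)=p_k$ and symmetrically $\phi(\psi(p_k))=p_k$, so $\psi\circ\phi=\mathrm{id}=\phi\circ\psi$ on the generators and hence on all of $\Lambda$, and they extend to mutually inverse $\Q[z]$-algebra automorphisms of $\Lambda[z]$. Applying $\psi$ to $X_0(G_*^v)=\phi(zP_{G,v})$ then gives $\psi(X_0(G_*^v))=\psi(\phi(zP_{G,v}))=zP_{G,v}$, which is the second claimed identity.

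The argument is essentially a matching of two explicit formulas, so there is no deep obstacle; the only place that needs genuine care is the interface between the abstract symmetric-function statement (living in $\Lambda[z]=\Q[p_k:k\ge1][z]$) and the polynomial realizations in $N$ variables used in~\eqref{eq:p-exp} — one must be explicit that ``replace $p_k$ by $z^k+p_k$'' is exactly the map induced on the realization $p_k\mapsto x_1^k+\cdots+x_N^k$ by adjoining the variable $x_0=z$, and that this commutes with the substitution $N\to\infty$. I would phrase the whole proof at the level of $\Lambda[z]$ to sidestep any stability worries, invoking the algebraic independence of the $p_k$ to guarantee $\phi,\psi$ are well-defined, and only mention the polynomial picture parenthetically to connect with Proposition~\ref{prop:p-exp}.
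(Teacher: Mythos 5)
Your proposal is correct and follows essentially the same route as the paper: compare the power-sum expansions~\eqref{eq:pointed-def} and~\eqref{eq:p-exp} term-by-term, identify $\phi(p_k)=z^k+p_k$ with adjoining $x_0=z$ to the variable set, and observe that the factor of $z$ shifts the exponent $\lambda_v^+(G_S)-1$ up to $\lambda_v^+(G_S)$. The paper states the proposition with no separate proof block, treating it as an immediate restatement of the preceding comparison; your version is a more careful write-up of the same observation, including the worthwhile explicit check that $\phi$ and $\psi$ are mutually inverse $\Q[z]$-algebra automorphisms of $\Lambda[z]$ via algebraic independence of the $p_k$.
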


\begin{example}\label{ex:pointed2} For 
$G_*^v=\raisebox{-0.1\height}{\includegraphics[width=1.5cm]{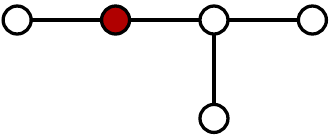}}$, 
  we use Equation~\eqref{eq:pointed-def} to compute
  \begin{equation}\label{eq:pointed-ex-2}
    P_{G,v} = (p_{1111} - 2p_{211} + p_{31})z^0 +
    (-2p_{111}+2p_{21}-p_3)z^1 + (3p_{11})z^2 + (-3p_1)z^3 + z^4.
  \end{equation}
Replacing each $p_k$ by $z^k+p_k$ and simplifying, we obtain
  \begin{equation}
  X_0(G_*^v) = (p_{1111}-2p_{211}+p_{31})z + (2p_{111}-2p_{21})z^2 + p_{11}z^3.
  \end{equation}
  Notice that the coefficient of $z=x_0$ in $X_0(G_*^v)$ and $zP_{G,v}$
  is the same; this holds in general.
\end{example}

Pawlowski proved~\cite[Cor. 3.6]{pawlowski} 
that replacing $z$ by $-z$ in $P_{G,v}$ gives a positive sum of monomials. 
He asked for a combinatorial interpretation of the coefficient of $(-z)^k$
in $P_{G,v}$. We give one such interpretation in Theorem~\ref{thm:ans-paw} 
below. First we need the following definitions. 
Suppose $H$ is a connected graph with $k+1$ vertices. 
For each subset $S$ of $E(H)$, recall $H_S$ is the graph with
vertex set $V(H)$ and edge set $S$.  Define
\[ f(H)=\sum_{\substack{S\subseteq E(H):
      \\ H_S\text{ is connected}}} (-1)^{|S|-k}, \]
which is a signed sum of all connected spanning subgraphs of $H$.
The following combinatorial interpretation of $f(H)$ shows that
$f(H)$ is always a positive integer. 
Fix a total ordering $e_1<e_2<\cdots<e_m$ of the edge set $E(H)$.
Say that a spanning tree $T$ of $H$ is \emph{internal} 
iff for every cycle in $H$ such that all but one edge $e_j$ of the cycle
belongs to $T$, the missing edge is not the largest edge in the cycle.
In Tutte's terminology~\cite[pg. 85]{tutte}, an internal spanning tree
is a spanning tree with no externally active edges.

\begin{lemma}\label{lem:subtree-var}
For any connected graph $H$ with $k+1$ vertices,
$f(H)$ is the number of internal spanning trees of $H$.
\end{lemma}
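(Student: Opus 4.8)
The plan is to prove the identity
\[
\sum_{\substack{S\subseteq E(H):\\ H_S\text{ connected}}} (-1)^{|S|-k}
 = \#\{\text{internal spanning trees of }H\}
\]
by the standard "activity" argument that underlies Tutte's interpretation of the chromatic/Tutte polynomials, adapted to the single evaluation at hand. Fix the total order $e_1<e_2<\cdots<e_m$ on $E(H)$. Every connected spanning subgraph $H_S$ contains at least one spanning tree; the key is to choose one canonically and then partition the collection of connected spanning subgraphs into "intervals" indexed by spanning trees, so that the signed count over each interval collapses to either $0$ or $1$.

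First I would recall the classical notion of external activity relative to a spanning tree $T$: an edge $e\notin T$ is \emph{externally active} if $e$ is the largest edge (in our fixed order) in the unique cycle contained in $T\cup\{e\}$ (the fundamental cycle of $e$). An \emph{internal} spanning tree, as defined in the excerpt, is exactly one with no externally active edges. The second step is the combinatorial core: for each connected spanning subgraph $H_S$, run the greedy (Kruskal-type) algorithm that scans $e_1,e_2,\dots,e_m$ in order and keeps an edge precisely when it does not create a cycle with the edges kept so far; call the resulting spanning tree $T(S)$. A standard lemma states that $T(S)$ has no externally active edge lying in $S$ — equivalently, $T(S)$ is internal whenever $S=E(H)$'s... more precisely, one shows: the edges of $S$ that are \emph{not} in $T(S)$ are exactly the edges of $S$ that form a cycle with smaller-or-equal edges already chosen, and such an edge is never the largest in its fundamental cycle with respect to $T(S)$. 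Hence each such "extra" edge $e\in S\setminus T(S)$ is externally inactive for $T(S)$, so $e$ may be freely included or excluded without changing $T(\cdot)$; and an edge $e\notin S$ that \emph{is} externally active for $T(S)$ can never be in $S$ for any $S$ with $T(S)$ equal to this tree (adding it would be chosen by the greedy algorithm first). I would package this as: for a fixed internal spanning tree $T$, the set $\{S : H_S\text{ connected},\ T(S)=T\}$ is precisely $\{\,T\cup A : A\subseteq \operatorname{EI}(T)\,\}$, where $\operatorname{EI}(T)$ is the set of externally inactive non-tree edges; and for a non-internal spanning tree $T$ this fiber is empty. (The fiber over $T$ being $T\cup A$ for arbitrary $A\subseteq\operatorname{EI}(T)$ uses that deleting an externally inactive edge from a connected graph whose greedy tree is $T$ keeps the graph connected and the greedy tree unchanged.)

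The third step is just bookkeeping. Summing over the fibers,
\[
\sum_{\substack{S\subseteq E(H):\\ H_S\text{ connected}}} (-1)^{|S|-k}
 = \sum_{T\text{ internal}} \ (-1)^{|T|-k}\!\!\sum_{A\subseteq\operatorname{EI}(T)}\!\! (-1)^{|A|}
 = \sum_{T\text{ internal}} (-1)^{0}\prod_{e\in\operatorname{EI}(T)}(1-1)^{[\,\cdot\,]} ,
\]
and since $|T|=k$ for every spanning tree of the $(k+1)$-vertex graph $H$, the sign $(-1)^{|T|-k}$ is $1$; the inner alternating sum over subsets of $\operatorname{EI}(T)$ equals $0$ unless $\operatorname{EI}(T)=\emptyset$, i.e.\ unless $T$ is internal with \emph{no} externally inactive non-tree edges either — but a \emph{spanning} tree of a connected graph has $|E(H)|-k$ non-tree edges, all of which are partitioned into externally active and externally inactive; "internal" already forces no externally active ones, so $\operatorname{EI}(T)=E(H)\setminus T$ and the inner sum is $0$ whenever $H$ has any non-tree edge relative to $T$. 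This looks like it kills everything, so the correct organizing principle must instead be the classical one where we keep a fixed internal tree and sum the sign over supergraphs obtained by adding externally inactive edges \emph{and} deleting nothing forced — i.e.\ the fiber is $\{T\cup A\}$ and the residual sum is $\sum_{A}(-1)^{|A|}=0$ unless $\operatorname{EI}(T)=\emptyset$, which happens precisely when $T=H$. The resolution (and the step I expect to be the main obstacle) is to realize that the right canonical map sends $S$ not to a greedy \emph{tree} but to the greedy \emph{internal spanning tree of $H_S$ itself}, and that the fiber over an internal spanning tree $T$ of $H$ consists exactly of those $S$ with $T\subseteq S$ and $S\setminus T$ containing \emph{only} edges externally active in $H$ relative to $T$; then $(-1)^{|S|-k}$ summed over the fiber telescopes to $+1$ because each externally active edge, when added, is forced, giving a singleton-type contribution. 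I would therefore spend the bulk of the write-up carefully setting up the bijection $S\mapsto T(S)$ and proving the fiber description, citing Tutte~\cite[pg.~85]{tutte} or Bjorner's treatment of broken-circuit/activity arguments for the sign cancellation, and then the arithmetic is immediate. The genuinely delicate point — and the one I would check most carefully — is the precise statement of which edges outside $T(S)$ can or must belong to $S$, since this is exactly where "externally active" versus "externally inactive" must be matched against the greedy algorithm's behavior; everything else is formal.
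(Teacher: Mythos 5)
There is a genuine gap, and it sits exactly where you predicted it would: in deciding which non-tree edges of $S$ can accompany the greedy tree $T(S)$. With the Kruskal-type scan in \emph{increasing} order $e_1<e_2<\cdots$, an edge $e\in S$ that gets rejected creates a cycle with previously kept edges, all of which are smaller than $e$ and lie in $T(S)$; that cycle is precisely the fundamental cycle of $e$ with respect to $T(S)$, so $e$ \emph{is} the largest edge of its fundamental cycle, i.e.\ externally \emph{active}. Your ``standard lemma'' asserts the opposite (that rejected edges are never largest in their fundamental cycle, hence externally inactive), and this reversal is what drives your computation into the contradiction you noticed. The correct fiber description for your map is: for \emph{every} spanning tree $T$ of $H$ (internal or not), $\{S:\ H_S\text{ connected},\ T(S)=T\}=\{T\cup B:\ B\subseteq EA(T)\}$, where $EA(T)$ is the set of externally active non-tree edges. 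The signed sum over such a fiber is $\sum_{B\subseteq EA(T)}(-1)^{|B|}$, which is $0$ unless $EA(T)=\emptyset$, i.e.\ unless $T$ is internal, in which case the fiber is the singleton $\{T\}$ contributing $+1$. That yields the lemma.

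Your attempted ``resolution'' does not repair this. It is false that the fiber over a non-internal spanning tree is empty (e.g.\ if $e$ is externally active for $T$, then $S=T\cup\{e\}$ is connected and $T(S)=T$), and the claim that the alternating sum over a fiber ``telescopes to $+1$ because each externally active edge, when added, is forced'' is not an argument: over a fiber $\{T\cup B: B\subseteq EA(T)\}$ the edges of $EA(T)$ are freely added or omitted, so the sum is $0$ whenever $EA(T)\neq\emptyset$, and one must show these cancelling fibers exhaust all connected spanning subgraphs whose greedy tree is non-internal. So as written the proposal is not a proof, though the intended route (Crapo/Tutte activity intervals, i.e.\ evaluating the Tutte polynomial at $(1,0)$) is sound once the active/inactive roles are corrected. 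For comparison, the paper avoids the interval partition altogether: it defines a one-step sign-reversing involution on connected spanning subgraphs, toggling at the first edge $e_j$ that either lies in $S$ and is the largest edge of some cycle of $H_S$, or lies outside $S$ and would become the largest edge of a cycle upon addition; its fixed points are exactly the internal spanning trees, all with positive sign. Either argument works, but yours needs the fiber lemma stated and proved correctly, including the cancellation over non-internal trees.
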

\begin{proof}
Let $Z=\{S\subseteq E(H): H_S\text{ is connected}\}$,
and let the sign of $S$ be $(-1)^{|S|-k}$ for each $S\in Z$.
It suffices to define a sign-reversing involution $I:Z\rightarrow Z$
whose fixed points are positive and correspond to the internal
spanning trees of $H$. Given $S\in Z$, compute $I(S)$ by the following
algorithm. Process the edges $e_1,e_2,\ldots,e_m$ in the given order.
When edge $e_j$ is reached, do the following.
If $e_j\in S$ and $e_j$ is the largest edge in some cycle of $H_S$,
then return $I(S)=S\setminus\{e_j\}$.
If $e_j\not\in S$ and adding $e_j$ to $S$ creates a cycle with
largest edge $e_j$, then return $I(S)=S\cup\{e_j\}$.
Otherwise, proceed to the next edge. If no output is returned 
after processing all edges, then return $I(S)=S$.

Since $I$ acts by deleting or adding one edge from $S$,
$I$ reverses signs when acting on a non-fixed point.
To see that $I$ is an involution, suppose we compute $I(S)=S'$
by returning an answer when edge $e_j$ is processed.
The decisions made when processing edges $e_1,\ldots,e_{j-1}$
do not depend on whether $e_j$ is or is not in $S$.
So, when we compute $I(S')$, the same decisions will be
made for edges $e_1,\ldots,e_{j-1}$ (namely, continue without
returning an output). When the algorithm processes edge $e_j$, 
it returns $I(S')=S$, as needed.

Now consider the fixed points of $I$.
On one hand, let $S\in Z$ correspond to a spanning subgraph of $H$
that is not a tree. This subgraph has a cycle, so there exists
an edge $e_j$ that is the largest edge in some cycle of $H_S$.
Then $I$ matches $S$ to some $S'$ of opposite sign when processing
$e_j$ (or perhaps earlier). On the other hand, by comparing the
definition of internal spanning tree to the definition of $I$, we see
that the edge set $S$ of a spanning tree of $H$ is a fixed point of $I$
iff this spanning tree is internal. All such trees have $k$ edges
and therefore have positive sign.
\end{proof}

The next theorem shows that the coefficient of $(-z)^k$ in $P_{G,v}$
is the weighted sum of pairs $(T,\kappa)$, where $T$ is an internal
spanning subtree of a $(k+1)$-vertex induced subgraph of $G$ containing the
root $v$, and $\kappa$ is a proper coloring of the vertices of $G$
not in this subgraph.

\begin{theorem}\label{thm:ans-paw}
Let $G$ be a graph with root vertex $v$. For all $k$,
the coefficient of $(-z)^k$ in $P_{G,v}$ is $\sum_{H} f(H)X_{G\setminus H}$,
where we sum over all $H=(V(H),E(H))$ satisfying the following conditions:
$H$ is a connected graph, $|V(H)|=k+1$, $v\in V(H)$,
and $E(H)$ consists of all edges in $E(G)$ joining vertices in $V(H)$.
Here, $G\setminus H$ is the graph obtained from $G$ by deleting all vertices 
in $H$ and all edges incident to those vertices.
\end{theorem}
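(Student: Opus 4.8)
The plan is to start from the power-sum expansion of Pawlowski's pointed chromatic function in~\eqref{eq:pointed-def}, group the sum over $S\subseteq E(G)$ according to which vertices and edges lie in the connected component $C^v$ of $G_S$ containing the root $v$. Write $V(H)=V(C^v)$ and let $H$ be the induced subgraph of $G$ on $V(H)$; then $S$ decomposes as $S=S_1\sqcup S_2$, where $S_1\subseteq E(H)$ is the edge set of the component structure inside $H$ (so that $(V(H),S_1)$ is connected and equals $C^v$) and $S_2$ is an arbitrary subset of the edges of $G$ not touching $V(H)$ — no edge of $S$ may leave $V(H)$, precisely because $C^v$ is a full component. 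Here is where the condition $\lambda_v^+(G_S)=k+1$ corresponds to $|V(H)|=k+1$, and the factor $z^{\lambda_v^+(G_S)-1}=z^k$ appears exactly when $|V(H)|=k+1$, which is what lets us read off the coefficient of $z^k$.

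First I would make this decomposition precise: fixing a connected induced subgraph $H$ of $G$ with $v\in V(H)$ and $|V(H)|=k+1$, the subsets $S$ with $C^v$ having vertex set $V(H)$ are in bijection with pairs $(S_1,S_2)$ where $S_1\subseteq E(H)$ makes $(V(H),S_1)$ connected and $S_2\subseteq E(G\setminus H)$ is arbitrary. Under this decomposition, $(-1)^{|S|}=(-1)^{|S_1|}(-1)^{|S_2|}$, and the multiset of component sizes of $G_S$ other than that of $C^v$ is exactly the multiset of component sizes of $(G\setminus H)_{S_2}$, so $p_{\lambda_v^-(G_S)}=p_{\lambda((G\setminus H)_{S_2})}$. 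Summing over $S_2$ and applying Stanley's formula~\eqref{eq:stan-p} to $G\setminus H$ gives $\sum_{S_2}(-1)^{|S_2|}p_{\lambda((G\setminus H)_{S_2})}=X_{G\setminus H}$. Summing over $S_1$ gives $\sum_{S_1}(-1)^{|S_1|}=(-1)^k\sum_{S_1}(-1)^{|S_1|-k}=(-1)^k f(H)$ by the definition of $f(H)$ (note $H$ has $k+1$ vertices). Collecting the $z^k$ terms, the coefficient of $z^k$ in $P_{G,v}$ is $(-1)^k\sum_H f(H)X_{G\setminus H}$, i.e.\ the coefficient of $(-z)^k$ is $\sum_H f(H)X_{G\setminus H}$, as claimed. (The positivity of the answer is then a corollary of Lemma~\ref{lem:subtree-var} together with the monomial-positivity of each $X_{G\setminus H}$, recovering Pawlowski's result.)

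The main obstacle I anticipate is purely bookkeeping: verifying carefully that the map $S\mapsto (S_1,S_2)$ together with the choice of $H$ is genuinely a bijection onto all subsets of $E(G)$, with no double counting and no omissions. The key points to nail down are (i) that $H$ is forced to be the \emph{induced} subgraph on $V(C^v)$ — an edge of $G$ with both endpoints in $V(C^v)$ need not lie in $S$, but it does lie in $E(H)$, so it contributes to the $S_1$ sum; (ii) that no edge of $S$ can have exactly one endpoint in $V(H)$, since that would enlarge the component $C^v$; and (iii) that the edges of $S$ disjoint from $V(H)$ are unconstrained, which is what makes the inner sum over $S_2$ a full copy of the sum in~\eqref{eq:stan-p} for $G\setminus H$. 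Once the bijection is set up cleanly, the sign and power-sum factorizations are immediate, and the rest is a one-line application of~\eqref{eq:stan-p} and the definition of $f(H)$.
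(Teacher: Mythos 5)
Your proposal is correct and follows essentially the same route as the paper's proof: both decompose the sum over $S\subseteq E(G)$ according to the vertex set of the component of $v$ in $G_S$, write $S=S_1\sqcup S_2$ with $S_1\subseteq E(H)$ making $H_{S_1}$ connected and $S_2\subseteq E(G\setminus H)$ arbitrary, identify $p_{\lambda_v^-(G_S)}=p_{\lambda((G\setminus H)_{S_2})}$, and then factor the inner sum into $f(H)$ (by definition) times $X_{G\setminus H}$ (by~\eqref{eq:stan-p}). Your bookkeeping remarks (i)--(iii) are exactly the points the paper relies on implicitly, so this matches the published argument.
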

\begin{proof}
Fix $k$, and let $\mcH$ be the set of all graphs $H$ satisfying the
conditions in the theorem statement. Let $\mcS$ be the set of all
$S\subseteq E(G)$ such that $\lambda_v^+(G_S)-1=k$. By the defining
formula~\eqref{eq:pointed-def}, the coefficient of $(-z)^k$ in $P_{G,v}$ is
\begin{equation}\label{eq:ptd-coeff}
 \sum_{S\in\mcS} (-1)^{|S|-k}p_{\lambda_v^-(G_S)}. 
\end{equation}
Define a function $h:\mcS\rightarrow\mcH$ by letting
$h(S)=H$, where $V(H)$ is the connected component of $v$ in $G_S$.
For each $H\in\mcH$, we can build all $S\in\mcS$ with $h(S)=H$ as follows.
Choose $S_1\subseteq E(H)$ such that $H_{S_1}$ is connected,
choose $S_2\subseteq E(G\setminus H)$ arbitrarily, and let $S=S_1\cup S_2$.
By definition, $p_{\lambda_v^-(G_S)}=p_{\lambda((G\setminus H)_{S_2})}$.
Therefore, equation~\eqref{eq:ptd-coeff} can be rewritten as
\[ \sum_{H\in\mcH}
 \left(\sum_{\substack{ S_1\subseteq E(H):
 \\ H_{S_1} \text{ is connected}}} (-1)^{|S_1|-k}\right)\cdot
\left(\sum_{S_2\subseteq E(G\setminus H)} (-1)^{|S_2|}
 p_{\lambda((G\setminus H)_{S_2})}\right). \]
By Lemma~\ref{lem:subtree-var}, the sum over $S_1$ equals $f(H)$.
By~\eqref{eq:stan-p}, the sum over $S_2$ equals $X_{G\setminus H}$.
\end{proof}

When $G$ itself is a tree, the subgraphs $H$ in Theorem~\ref{thm:ans-paw}
are also trees. In this case, all the multipliers $f(H)$ are $1$.
We see that for trees $T$, the coefficient of $z^k$ in each of
$X_0(T_*^v)$ and $P_{T,v}(-z)$ is a weighted sum of proper colorings
of certain subgraphs of $T$. For $X_0(T_*^v)$, 
these subgraphs are complements of independent sets containing $v$
(Proposition~\ref{prop:coeff-zk}(a)). For $P_{T,v}$, 
these subgraphs are complements of connected subgraphs containing $v$.

%%%%%%%%%%%%%%%%%%%%%%%%%%%%%%%%%%%%%%%
\subsection{Rooted $U$-Polynomials}
\label{subsec:rooted-U}

The pointed chromatic symmetric functions are closely related to
another family of polynomials called rooted
$U$-polynomials~\cite{ADZ-rooted}.  To describe these, we first review
the $W$-polynomials and $U$-polynomials of Noble and
Welsh~\cite{noble-welsh}. A \emph{weighted graph} is a pair
$(G,\omega)$ where $G$ is a graph (possibly containing loop edges
or multiple edges with the same endpoints) and
$\omega:V(G)\rightarrow\Z_{>0}$ is a weight function on the vertex set
of $G$. Writing $\bx$ for the sequence of commuting indeterminates
$x_1,x_2,\ldots$, the \emph{$W$-polynomials} $W_{(G,\omega)}(\bx,y)$
are defined recursively using a version of the deletion-contraction
recursion.  As an initial condition, if $G$ has no edges, then
$W_{(G,\omega)}(\bx,y) =\prod_{v\in V(G)} x_{\omega(v)}$.  If $G$ has
a loop edge $e$, then $W_{(G,\omega)}(\bx,y) =
yW_{(G-e,\omega)}(\bx,y)$, where $G-e$ is the graph $G$ with loop $e$
deleted.  If $G$ has an edge $e$ with distinct endpoints $v_i$ and
$v_j$, then
\[W_{(G,\omega)}(\bx,y) = W_{(G-e,\omega)}(\bx,y) + W_{(G_e,\omega_e)}(\bx,y),\]
 where $G_e$ is the contraction of $G$ along $e$
 (i.e., delete $e$ and identify $v_i$ and $v_j$ as a new vertex $v_{ij}$),
 $\omega_e(v_{ij})=\omega(v_i)+\omega(v_j)$, and
 $\omega_e(v)=\omega(v)$ for all $v\not\in\{v_i,v_j\}$.
Note that contraction of an edge could lead to a graph with
a loop edge or multiple edges with the same endpoint.
When $G$ is a forest, the variable $y$ does not appear in $W_{(G,\omega)}$.

\begin{remark}
  In 1994, Chmutov, Duzhin, and Lando~\cite{chmutov} independently
  defined functions on weighted graphs (in the context of Vassiliev
  knot invariants) that are essentially equivalent to the
  $W$-polynomials of Noble and Welsh with $y=0$.  Equation~\eqref{eq:stan-p} 
  is a specialization of an equation appearing at the beginning of
  Section~2.2 in~\cite{chmutov}.
\end{remark}

The \emph{$U$-polynomial} for a graph $G$ is
$U_G(\bx,y)=W_{(G,\mathbf{1})}(\bx,y)$, where
$\mathbf{1}$ is the weight function sending each $v\in V(G)$ to $1$.

\begin{example}
Let $\omega$ be the weight function that assigns $3$ to the leftmost
vertex of $T =
\raisebox{-0.1\height}{\includegraphics[width=1cm]{ex}}$ and $1$ to
the other two vertices. One can check that
  $W_{(T,\omega)}(\bx,y) = x_1^2x_3 + x_2x_3 + x_1x_4 + x_5$
  and $U_T(\bx,y) = x_1^3 + 2x_1x_2 + x_3$. 
\end{example}

The $U$-polynomials can also be defined directly on unweighted graphs $G$
as an alternating sum over subsets of the edge set,
as in~\eqref{eq:stan-p}. Define the \emph{rank} of $S\subseteq E(G)$
to be $r(S) = |V(G)| - k(G_S)$, where $k(G_S)=\ell(\lambda(G_S))$ 
is the number of connected components of $G_S$. Then
\begin{equation}\label{eq:U-altsum}
  U_G(\bx,y) = \sum_{S\subseteq E(G)} \bx_{\lambda(G_S)}(y-1)^{|S|-r(S)},
\end{equation}
where $\bx_{\lambda}=\prod_i x_{\lambda_i}$.
When $G$ is a forest, $k(G_S) = |V(G)|-|S|$, so $r(S)=|S|$ and
the power of $y-1$ disappears. For any loopless graph $G$,
we can recover $X_G$ from $U_G$ by setting $y=0$ and $x_i=-p_i$ for all $i>0$
and multiplying by $(-1)^{|V(G)|}$ (see~\cite[Theorem 6.1]{noble-welsh}).

The \emph{rooted $U$-polynomial}
for the graph $G$ rooted at $v$ is defined in~\cite{ADZ-rooted} as
\begin{equation}\label{eq:RU-altsum}
  U^r(G,v;\bx,y,z) = \sum_{S\subseteq E(G)} 
\bx_{\lambda_v^{-}(G_S)}z^{\lambda_v^+(G_S)}(y-1)^{|S|-r(S)}.
% NAL: I checked \cite{ADZ-rooted} does NOT subtract 1 in power of z.
\end{equation}
Note that~\eqref{eq:RU-altsum} modifies~\eqref{eq:U-altsum}
in the same way that~\eqref{eq:pointed-def} modifies~\eqref{eq:stan-p},
up to a factor of $z^{-1}$.
In particular, Pawlowski's pointed 
chromatic symmetric function can be recovered from the rooted
$U$-polynomial by setting $y=0$ and $x_i=-p_i$ for all $i>0$ 
and multiplying by $(-1)^{|V(G)|+1}z^{-1}$.  
(An extra sign is needed since $\lambda_v^-(G_S)$ has one fewer
part than $\lambda(G_S)$.)

\begin{example}
For the rooted graph ${\includegraphics[width=1cm]{ex-B}}$,
we compute $U^r(G,v;\bx,y,z) = x_1^2z+2x_1z^2 + z^3$. % NAL CHECKED.
\end{example}

Corollary~10 of~\cite{ADZ-rooted} proves that the rooted $U$-polynomials
distinguish isomorphism classes of rooted trees. Combining this
with Proposition~\ref{prop:X0-vs-paw} gives a second proof of
the symmetric function version of Theorem~\ref{thm:best}.
In detail, suppose rooted trees $T_*^v$ and $S_*^w$ 
have $X_0(T_*)=X_0(S_*)$.  Then $P_{T,v}=P_{S,w}$
by Proposition~\ref{prop:X0-vs-paw}. Since $y$ does not appear
in the rooted $U$-polynomial for a tree, we can reverse the
transformation above to conclude that $U^r(T,v;\bx,y,z)=U^r(S,w;\bx,y,z)$.
Hence, $T_*$ and $S_*$ are isomorphic by \cite[Cor. 10]{ADZ-rooted}.
Theorem~\ref{thm:best} is a stronger result, showing
that the polynomial version $X_0(T_*;x_0,x_1,x_2,x_3)$
or its specialization $X_0^2(T_*)=X_0(T_*;q,q,1,1)$
already suffices to distinguish rooted trees with any number of vertices.

%%%%%%%%%%%%%%%%%%%%%%%%%%%%%%%%%%%%%%%%%%
\subsection{Noncommutative Chromatic Symmetric Functions}
\label{subsec:noncomm}

In 2001, Gebhard and Sagan introduced a noncommutative
version of the chromatic symmetric function~\cite{gebhard-sagan} 
(see also the more recent~\cite{AWW}).
Let $x_1,x_2,\ldots$ be noncommuting indeterminates. Given a graph $G$,
let $v_1<v_2<\cdots<v_n$ be a fixed total ordering of $V(G)$.
The \emph{weight} of a proper coloring $\kappa:V(G)\rightarrow\Z_{>0}$
is $x_{\kappa(v_1)}x_{\kappa(v_2)}\cdots x_{\kappa(v_n)}$.
The \emph{noncommutative chromatic symmetric function}
$Y_G$ is the sum of the weights of all proper colorings. Clearly,
$Y_G$ reduces to $X_G$ when the variables are allowed to commute. 
A primary motivation for introducing $Y_G$ is that $Y_G$ satisfies a
deletion-contraction recursion, which is based on an operation called
induction. Proposition 8.2 of~\cite{gebhard-sagan} shows that two
simple graphs $G$ and $H$ are isomorphic if and only if $Y_G=Y_H$.

\begin{example}
Using a left-to-right ordering of the vertices, we have
$Y_{\raisebox{-0.1\height}{\includegraphics[width=1cm]{ex}}} 
= \sum_{i,j,k} x_ix_jx_k + \sum_{i,j} x_ix_jx_i$, %= m_{1/1/1} + m_{13/2}$,
where the indices $i,j,k$ in the two sums are required to be distinct.
\end{example}

Let $G_*$ be a rooted graph with root $r$,
and suppose the chosen total ordering $v_1<\cdots<v_n$ of $V(G)$ 
satisfies $v_1=r$.  We can recover $X_0(G_*;x_0,x_1,\ldots,x_N)$ 
from $Y_G(x_0,x_1,\ldots,x_N)$ as follows: discard all monomials
in $Y_G$ that do not begin with $x_0$, and then allow all
indeterminates $x_i$ to commute. 

Each of the functions $X_G$, $X_0(G_*)$, and $Y_G$ reveals certain
information about the proper colorings of $G$. Each monomial in 
$X_G$ tells us the multiset of colors used in the associated coloring 
$\kappa$. The monomials in $X_0(G_*)$ contain similar information, 
but now we are also told that one specific vertex (the root) has color $0$.
The monomial for $\kappa$ in $Y_G$ tells us much more,
since we can recover $\kappa$ itself from the monomial
$x_{\kappa(v_1)}x_{\kappa(v_2)}\cdots x_{\kappa(v_n)}$. For any graph
$G$, it is easy to reconstruct the edge set $E(G)$ from the information 
in $Y_G$~\cite[Prop. 8.2]{gebhard-sagan}. 
Stanley's Conjecture (still open) asks whether a tree $T$
can be reconstructed from the information in $X_T$. Theorem~\ref{thm:best}
shows that a rooted tree $T_*$ can be recovered from $X_0(T_*)$.
This result is tantalizing because $X_0(T_*)$ seems so much closer
to $X_T$ compared to $Y_T$.

\subsection{Chromatic Quasisymmetric Functions}
\label{subsec:qsym-CF}

Shareshian and Wachs~\cite{shareshian-wachs} 
studied another variation of $X_G$ involving
$\QSym$, the ring of quasisymmetric functions in commuting 
indeterminates $x_1,x_2,\ldots$. Fix a total ordering $v_1<v_2<\cdots<v_n$
of $V(G)$. Given a proper coloring $\kappa$ of $G$,
define $\asc(\kappa)$ to be the number of $i<j$ such that
there is an edge in $G$ from $v_i$ to $v_j$ and $\kappa(v_i)<\kappa(v_j)$.
The Shareshian--Wachs \emph{chromatic quasisymmetric function} is
\begin{equation}\label{eq:sw}
  X_G(\bx,t) = \sum_{\kappa} t^{\asc(\kappa)} \wt(\kappa),
\end{equation}
where we sum over all proper colorings 
$\kappa:V(G)\rightarrow\Z_{>0}$. It can be shown
that $X_G(\bx,t)\in \QSym[t]$, meaning that the coefficient of each $t^m$
is a quasisymmetric function.  The specialization $X_G(\bx,1)$ is 
Stanley's symmetric function $X_G$.

\begin{example}
Let $M_\alpha$ be the monomial quasisymmetric function
indexed by the composition $\alpha$. Using a left-to-right
ordering of the vertices, we find
\[ X_{\raisebox{-0.1\height}{\includegraphics[width=1cm]{ex}}}(\bx,t)
 = M_{111} + (M_{21} + M_{12} + 4 M_{111})t + M_{111} t^2. \]
Setting $t=1$ gives $X_G = m_{21} + 6 m_{111}$.
\end{example}

%%% 
In a related construction, Hasebe and Tsujie~\cite{hasebe-tsujie} define the
\emph{strict order quasisymmetric function} of a poset $P$, as follows.
Let $\Hom^{<}(P,\Z_{\geq 0})$ be the set of functions 
$f:P\rightarrow\Z_{\geq 0}$ such that for all $u,v\in P$,
$u<v$ in $P$ implies $f(u)<f(v)$ in $\Z_{\geq 0}$. Define
\begin{equation*}
 \Gamma^{<}(P,\bx) = \sum_{f\in\Hom^{<}(P,\Z_{\geq 0})} \prod_{v\in P} x_{f(v)}.
\end{equation*}
A rooted tree $T_*$ has an associated poset $P(T_*)$,
where $u\leq v$ in $P(T_*)$ iff the
unique path in $T_*$ from the root to $v$ passes through $u$. 
$\Gamma^{<}(P(T_*),\bx)$ sums over the subset of proper colorings of $T_*$
where the colors strictly increase following any path away from the root.

\begin{example}
We have
$\Gamma^{<}(\raisebox{-0.1\height}{\includegraphics[width=1cm]{ex-B}},
x_0,x_1,x_2) = x_0x_1^2 + 2x_0x_1x_2 + x_0x_2^2 =
X_0(\raisebox{-0.1\height}{\includegraphics[width=1cm]{ex-B}})$.
Equality holds here only because every path from the root has length
$1$ and the number of colors equals the number of vertices.  Note that
$\Gamma^{<}(\raisebox{-0.1\height}{\includegraphics[width=1cm]{ex-A}},
x_0,x_1,x_2) = x_0x_1x_2$, which does not equal
$X_0(\raisebox{-0.1\height}{\includegraphics[width=1cm]{ex-A}};x_0,x_1,x_2)$
as given in~\eqref{eq:exA}.
\end{example}

Hasebe and Tsujie show that $\Gamma^{<}$ is a complete invariant for
rooted trees~\cite[Theorem 1.3]{hasebe-tsujie}. Their proof is
structurally similar to our proof of Theorem~\ref{thm:best}, 
relying on $\QSym$ being a unique factorization domain and on the
irreducibility of various polynomials appearing in the recursion. 
In a related vein, Ellzey~\cite{ellzey2,ellzey} extended the chromatic
symmetric function to directed graphs. In~\cite{McNamara}, Aval,
Djenabou, and McNamara build on this work and generalize some of
the work of Hasebe and Tsujie.

Let us informally compare the information encoded in
$X_0(T_*)$, $X_T(\bx;t)$, and $\Gamma^<(P(T_*))$ for a rooted tree $T_*$
with root $r$.  Assume $X_T(\bx;t)$ is computed using a total ordering 
of $V(G)$ compatible with the poset structure on $T_*$
(so $r$ must be $v_1$). Each monomial in $\Gamma^<(P(T_*))$ records the 
multiset of colors used by a proper coloring, but we also know
that colors must strictly increase along all paths leading away from the root. 
The restriction on which colorings are allowed
for $X_0(T_*)$ is much weaker: we only require
that the root itself receive the smallest available color (color $0$).
$X_T(\bx;t)$ contains even more information than $\Gamma^<(P(T_*))$,
since $\Gamma^<(P(T_*))$ is none other than the coefficient of $t^{|E(G)|}$ 
in $X_T(\bx;t)$ given our assumption on the total ordering of $V(G)$.

We can deduce that $X_T(\bx;t)$ distinguishes rooted trees, in the following
sense.  Suppose $T_*$ and $U_*$ are two $n$-vertex rooted trees with unlabeled 
vertices.  In each tree, label the vertices $1,2,\ldots,n$ in a way that is 
compatible with the poset structure on each tree. Suppose 
$X_T(\bx;t)=X_U(\bx;t)$ using the vertex order $1<2<\cdots<n$.
Then $\Gamma^<(P(T_*))=\Gamma^<(P(U_*))$ by taking the coefficient of $t^{n-1}$
on both sides. By Hasebe and Tsujie's result, $T_*$ is isomorphic to $U_*$.

\subsection{Other Variations for Weighted Graphs}
\label{subsec:other-wtd}

For completeness, we mention a few other variants of $X_G$ for weighted graphs.
In~\cite{crew-spirkl}, the authors 
define extended chromatic functions for weighted graphs
(cf. earlier overlapping results of~\cite{chmutov}).
Given a graph $G$ and a weight function $\omega:V(G)\rightarrow\Z_{>0}$,
define the weight of a proper coloring $\kappa$ of $G$ (relative to
$\omega)$ to be $\prod_{v\in V(G)} x_{\kappa(v)}^{\omega(v)}$.
The \emph{extended chromatic function} $X_{(G,\omega)}$ is the
sum of the weights of all proper colorings of $G$.
These functions satisfy a deletion-contraction recursion and can 
be recovered from the $W$-polynomial. They generalize Stanley's
chromatic symmetric function $X_G$ as well as the 
Shareshian--Wachs chromatic quasisymmetric functions.
In~\cite{alieste}, the authors work with a more
general notion of vertex-weighting and define $M$-polynomials for
marked graphs. They also consider a specialization of
$M$-polynomials called $D$-polynomials. Another generalization,
the $V$-polynomial, is considered in~\cite{ellis-moffatt}.

\section*{Acknowledgments}

The authors acknowledge Sergei Chmutov, Peter McNamara, Rosa Orellana, John
Shareshian, Stephanie van Willigenburg, and Victor Wang for supplying
references and other helpful comments regarding this paper.
The authors extend special thanks to an anonymous referee who showed
us how to generalize our original Theorem~\ref{thm:XG-irred} from
trees to connected graphs.

\end{document}